\DeclareMathAlphabet{\mathcal}{OMS}{cmsy}{m}{n}
\newtheorem{theorem}{Theorem}[section]
\newtheorem{lemma}[theorem]{Lemma}
\newtheorem{corollary}[theorem]{Corollary}
\newtheorem{proposition}[theorem]{Proposition}
\theoremstyle{definition}
\newtheorem{definition}[theorem]{Definition}
\newtheorem{example}[theorem]{Example}
\newtheorem{remark}[theorem]{Remark}
\newtheorem{question}[theorem]{Question}
\numberwithin{equation}{theorem}
\def\ge{\geqslant}
\def\le{\leqslant}
\def\phi{\varphi}
\def\tilde{\widetilde}
\def\bar{\overline}
\def\to{\longrightarrow}
\def\mapsto{\longmapsto}
\def\into{\lhook\joinrel\longrightarrow}
\def\onto{\relbar\joinrel\twoheadrightarrow}
\def\dlim{\varinjlim}
\def\ilim{\varprojlim}
\def\codim{\operatorname{codim}}
\def\height{\operatorname{height}\,}
\def\rank{\operatorname{rank}}
\def\Hom{\operatorname{Hom}}
\def\Hsing{H_{\mathrm{sing}}}
\def\Ext{\operatorname{Ext}}
\def\Tor{\operatorname{Tor}}
\def\pd{\operatorname{pd}}
\def\Pic{\operatorname{Pic}}
\def\Proj{\operatorname{Proj}}
\def\Sing{\operatorname{Sing}}
\def\Spec{\operatorname{Spec}\,}
\def\Supp{\operatorname{Supp}}
\newcommand{\Sym}{\mathrm{Sym}}
\newcommand{\R}{\mathrm{R}}
\newcommand{\dR}{\mathrm{dR}}
\def\frakm{\mathfrak{m}}
\def\frakp{\mathfrak{p}}
\def\CC{\mathbb{C}}
\def\FF{\mathbb{F}}
\def\PP{\mathbb{P}}
\def\QQ{\mathbb{Q}}
\def\ZZ{\mathbb{Z}}
\def\calA{\mathcal{A}}
\def\calC{\mathcal{C}}
\def\calE{\mathcal{E}}
\def\calF{\mathcal{F}}
\def\calG{\mathcal{G}}
\def\calH{\mathcal{H}}
\def\calI{\mathcal{I}}
\def\calJ{\mathcal{J}}
\def\calL{\mathcal{L}}
\def\calM{\mathcal{M}}
\def\calN{\mathcal{N}}
\def\calO{\mathcal{O}}
\def\calQ{\mathcal{Q}}
\def\calT{\mathcal{T}}
\def\calHom{\mathcal{H}om}
\begin{document}
\title{Stabilization of the cohomology of thickenings}

\author[Bhatt]{Bhargav Bhatt}
\address{Department of Mathematics, University of Michigan, 530 Church Street, Ann Arbor, MI~48109, USA}
\email{bhargav.bhatt@gmail.com}

\author[Blickle]{Manuel Blickle}
\address{Institut f\"ur Mathematik, Fachbereich 08, Johannes Gutenberg-Universit\"at Mainz,
\newline 55099~Mainz, Germany}
\email{blicklem@uni-mainz.de}

\author[Lyubeznik]{Gennady Lyubeznik}
\address{Department of Mathematics, University of Minnesota, 206 Church~St., Minneapolis,\newline MN~55455, USA}
\email{gennady@math.umn.edu}

\author[Singh]{Anurag K. Singh}
\address{Department of Mathematics, University of Utah, 155 S 1400 E, Salt Lake City,\newline UT~84112, USA}
\email{singh@math.utah.edu}

\author[Zhang]{Wenliang Zhang}
\address{Department of Mathematics, Statistics, and Computer Science, University of Illinois at Chicago, 851 S.~Morgan~St., Chicago, IL 60607, USA}
\email{wlzhang@uic.edu}

\thanks{B.B.~was supported by NSF grants DMS~1501461 and DMS~1522828, and by a Packard Fellowship, M.B.~by DFG grant SFB/TRR45, G.L.~by NSF grant DMS~1161783, A.K.S.~by NSF grants DMS~1162585 and DMS~1500613, and W.Z.~by NSF grant DMS~1405602. The authors are grateful to the American Institute of Mathematics (AIM) for supporting their collaboration.}

\begin{abstract}
For a local complete intersection subvariety $X=V(\calI)$ in $\PP^n$ over a field of characteristic zero, we show that, in cohomological degrees smaller than the codimension of the singular locus of $X$, the cohomology of vector bundles on the formal completion of~$\PP^n$ along $X$ can be effectively computed as the cohomology on any sufficiently high thickening~$X_t=V(\calI^t)$; the main ingredient here is a positivity result for the normal bundle of~$X$. Furthermore, we show that the Kodaira vanishing theorem holds for all thickenings~$X_t$ in the same range of cohomological degrees; this extends the known version of Kodaira vanishing on $X$, and the main new ingredient is a version of the Kodaira-Akizuki-Nakano vanishing theorem for $X$, formulated in terms of the cotangent complex.
\end{abstract}
\maketitle

%%%%%%%%%%%%%%%%%%%%%%%%%%%%%%%%%%%%%%%%%%%%%%%%%%%
\section{Introduction}
\label{section:introduction}
%%%%%%%%%%%%%%%%%%%%%%%%%%%%%%%%%%%%%%%%%%%%%%%%%%%

Let $X$ be a closed subscheme of projective space $\PP^n$ over a field, defined by a sheaf of ideals $\calI$. For each integer $t \ge 1$, let $X_t \subset \PP^n$ be the $t$-th thickening of $X$, i.e., the closed subscheme defined by the sheaf of ideals $\calI^t$. As $t$ varies, we obtain an inductive system
\[
\cdots \to X_{t-1} \to X_t \to X_{t+1} \to \cdots
\]
of subschemes of projective space. The goal of this paper is to investigate the behavior, especially the stability properties, of coherent sheaf cohomology in this system. More precisely, for each cohomological degree $k$ and each locally free coherent sheaf $\calF$ on $\PP^n$, we have an induced projective system of finite rank vector spaces
\begin{equation}
\label{equation:projective:system}
\cdots \to H^k(X_{t+1},\calF_{t+1}) \to H^k(X_t,\calF_t) \to H^k(X_{t-1},\calF_{t-1}) \to \cdots,
\end{equation}
where $\calF_t=\calO_{X_t}\otimes_{\calO_{\PP^n}}\calF$ is the induced sheaf on $X_t$. If $X$ is smooth, and the characteristic of the ground field is zero, Hartshorne proved that the inverse limit of this sequence is also a finite rank vector space for $k<\dim X$, \cite[Theorem~8.1~(b)]{Hartshorne:ampleVB}. The proof proceeds via showing that the transition maps in the system are eventually isomorphisms. 

If $X$ is locally a complete intersection (lci), but not necessarily smooth, the inverse limit of the system~\eqref{equation:projective:system} is still finite rank for $k < \dim X$, see Corollary~\ref{corollary:finite:rank}. Yet, in the lci case, the maps in the system are {\em not} eventually isomorphisms; in particular, Hartshorne's proof that the inverse limit has finite rank cannot be extended to the lci case. 

This paper is the result of an attempt to understand when the maps in~\eqref{equation:projective:system} are, in fact, eventually isomorphisms. One of our main theorems gives a sharp answer to this:

\begin{theorem}
\label{theorem:constant}
Let $X$ be a closed lci subvariety of $\PP^n$ over a field of characteristic zero. Let~$\calF$ be a coherent sheaf on $\PP^n$ that is flat along $X$. Then, for each $k<\codim(\Sing X)$, the natural map
\[
H^k(X_{t+1},\calO_{X_{t+1}}\otimes_{\calO_{\PP^n}}\calF) \to H^k(X_t,\calO_{X_t}\otimes_{\calO_{\PP^n}}\calF)
\]
is an isomorphism for all $t\gg0$.

In particular, if $X$ is smooth or has at most isolated singular points, then the map above is an isomorphism for $k<\dim X$ and $t\gg0$.
\end{theorem}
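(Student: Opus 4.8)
The plan is to control the transition maps one cohomological degree at a time, using the conormal filtration to reduce everything to a vanishing theorem for symmetric powers of the normal bundle. Write $\calN=(\calI/\calI^2)^\vee$ for the normal bundle of $X$; since $X$ is lci, $\calN$ is locally free, $\calI^t/\calI^{t+1}\cong\Sym^t(\calI/\calI^2)=\Sym^t(\calN^\vee)$, and $\calF|_X$ is locally free on $X$. Because $\calF$ is flat along $X$, tensoring $0\to\calI^t/\calI^{t+1}\to\calO_{X_{t+1}}\to\calO_{X_t}\to 0$ with $\calF$ remains exact, giving
\[
0\longrightarrow\Sym^t(\calN^\vee)\otimes_{\calO_X}\calF|_X\longrightarrow\calF_{t+1}\longrightarrow\calF_t\longrightarrow 0 .
\]
From the long exact cohomology sequence, the kernel of $H^k(X_{t+1},\calF_{t+1})\to H^k(X_t,\calF_t)$ is a quotient of $H^k\big(X,\Sym^t(\calN^\vee)\otimes\calF|_X\big)$. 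So the crux is the following \emph{Key Vanishing}: for every locally free sheaf $\calG$ on $X$ and every $k<\codim(\Sing X)$, one has $H^k\big(X,\Sym^t(\calN^\vee)\otimes\calG\big)=0$ for all $t\gg0$. Granting this for $\calG=\calF|_X$, the transition map is injective for all $t\gg0$; since the spaces $H^k(X_t,\calF_t)$ are finite dimensional, injectivity forces $\dim_kH^k(X_t,\calF_t)$ to be non-increasing in $t$, hence eventually constant, and an injection of finite dimensional vector spaces of equal dimension is an isomorphism. It is precisely this last step — rather than requiring vanishing in degree $k+1$ as well — that delivers the sharp range $k<\codim(\Sing X)$.

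To prove the Key Vanishing I would invoke a positivity result for $\calN$. Since $X$ is lci it is Gorenstein, so Serre duality gives $H^k\big(X,\Sym^t(\calN^\vee)\otimes\calG\big)^\vee\cong H^{d-k}\big(X,\Sym^t(\calN)\otimes\calG^\vee\otimes\omega_X\big)$ with $d=\dim X$ (using $(\Sym^t\calN^\vee)^\vee=\Sym^t\calN$ in characteristic zero); as $d-\codim(\Sing X)=\dim(\Sing X)$, the Key Vanishing is equivalent to $H^i\big(X,\Sym^t(\calN)\otimes\calH\big)=0$ for $i>\dim(\Sing X)$ and $t\gg0$, for every locally free $\calH$ — a suitable partial ampleness of $\calN$. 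The geometric starting point is that $\calN$ is genuinely ample away from $\Sing X$: $T_{\PP^n}$ is ample (Euler sequence), so $T_{\PP^n}|_X$ is ample, and dualizing the conormal sequence $0\to\calN^\vee\to\Omega_{\PP^n}|_X\to\Omega_X\to 0$ (exact since $X$ is lci) presents $\calN$ as a quotient of $T_{\PP^n}|_X$ with cokernel $\calExt^1_{\calO_X}(\Omega_X,\calO_X)$, which is supported on $\Sing X$; thus $\calN|_{X_{\mathrm{sm}}}$ is ample. Upgrading this to the vanishing $H^i(X,\Sym^t\calN\otimes\calH)=0$ for $i>\dim(\Sing X)$ — spreading the ampleness across the singular locus, using the Cohen--Macaulayness of $X$ and the local freeness of $\Sym^t\calN$ to bound the contribution of $\Sing X$ to the relevant cohomology, for instance by inducting on $\dim X$ via general hyperplane sections (which lower $\dim X$ and $\dim(\Sing X)$ simultaneously, the base case being isolated singularities) — is the technical heart of the matter and the step I expect to be the main obstacle; it is here that the lci hypothesis is used essentially.

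Finally, the ``in particular'' clause is contained in this. If $X$ is smooth, then $\calN$ is a quotient of the ample bundle $T_{\PP^n}|_X$, hence ample, so Serre vanishing for ample vector bundles gives $H^i(X,\Sym^t\calN\otimes\calH)=0$ for all $i>0$ and $t\gg0$; dually $H^k(X,\Sym^t\calN^\vee\otimes\calG)=0$ for $k<\dim X$ and $t\gg0$, and the first paragraph then produces the isomorphism for $k<\dim X$, recovering Hartshorne's theorem. If $X$ has at most isolated singular points, then $\dim(\Sing X)\le 0$, so $\codim(\Sing X)\ge\dim X$ and the case $k<\dim X$ is a special case of the main statement.
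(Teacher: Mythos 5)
Your reduction is exactly the paper's: tensor the conormal exact sequence with $\calF$, use finite-dimensionality of the $H^k(X_t,\calF_t)$ to convert eventual injectivity into eventual isomorphy, and Serre--dualize (via $X$ lci $\Rightarrow$ Gorenstein and $(\Sym^t(\calI/\calI^2))^\vee \cong \Sym^t(\calN)$ in characteristic zero) so that everything rests on the statement that $\calN$ is $s$-ample for $s=\dim(\Sing X)$, i.e.\ your ``Key Vanishing.'' You also correctly locate the source of positivity in the four-term sequence $0\to\calHom(\Omega^1_X,\calO_X)\to \calT_{\PP^n}|_X\to\calN\to\calExt^1(\Omega^1_X,\calO_X)\to 0$, whose last term is supported on $\Sing X$.

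However, the Key Vanishing itself is not proved, and it is precisely the main content of the theorem (Theorem~\ref{theorem:normal:lci} in the paper). Your fallback strategy --- induct on $\dim X$ via general hyperplane sections, with isolated singularities as the base case --- does not obviously close the gap: the base case is already the whole difficulty, because the map $\calT_{\PP^n}|_X\to\calN$ fails to be surjective exactly at the singular points, so even for isolated singularities one cannot deduce ampleness of $\calN$ by quotient stability; and the inductive step would require propagating vanishing for $\Sym^t(\calN)\otimes\calH$ from $X\cap H$ back to $X$ across infinitely many twists, where the positivity is only in the $\Sym^t$ direction. The paper instead proves a general filtration lemma (Lemma~\ref{lemma:ample:triangle}): given an exact triangle $K_1\to K_2\to K_3$ with $K_1$ an $s$-ample bundle and $K_3\in D^{\le 0}$ having $\calH^0(K_3)$ supported in dimension $\le s$, the groups $H^i(X,\Sym^t(K_2)\otimes\calF)$ vanish for $i>s$ and $t\gg0$; this is applied to $\calT_{\PP^n}|_X\to\calN\to\underline{\R\Hom}(\Omega^1_X,\calO_X)[1]$, with $\Sym^t$ of the triangle filtered by pieces $\Sym^a(K_1)\otimes\Sym^b(\calH^0(K_3))\otimes\Sym^c(\tau^{<0}K_3)$: the $b>0$ pieces die for dimension-of-support reasons, the $c$-index is bounded by $\dim X$ for degree reasons, and the rest is absorbed by the ampleness of $\calT_{\PP^n}|_X$. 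You would need to supply an argument of comparable substance to make your proposal complete.
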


While Hartshorne's result was ``one of the principal motivations for developing the theory of ample vector bundles" \cite[\S0]{Hartshorne:ampleVB}, our proof of the above theorem uses the notion of partially ample vector bundles introduced in \cite{Arapura:AJM}.

The theorem is sharp in multiple ways. First, the assertion fails in positive characteristic, even for $X$ smooth, by Example~\ref{example:charp}. Secondly, the bounds on the cohomological degree in terms of the dimension of the singular locus are sharp as well: this is essentially trivial in the smooth case, and is illustrated by Example~\ref{example:slocus:sharp} in the general case. Finally, the stabilization fails in the absence of the lci hypothesis; see Example~\ref{example:singular}.

Theorem~\ref{theorem:constant} translates to local cohomology as follows: Let $R$ be a standard graded polynomial ring that is finitely generated over a field~$R_0$ of characteristic zero; set $\frakm$ to be the homogeneous maximal ideal of $R$. For a homogeneous prime ideal $I$, Theorem~\ref{theorem:constant} yields stability results for graded components of the modules $H^k_\frakm(R/I^t)$, see Proposition~\ref{proposition:equivalence}.

For a noetherian ring $R$, the local cohomology module $H^k_I(R)$ can be realized as a direct limit of the modules $\Ext^k(R/I^t,R)$. In \cite{EMS}, Eisenbud, Musta{\c{t}}{\u{a}}, and Stillman ask about the injectivity of the maps in the inductive system; this is discussed in~\S\ref{section:EMS}. Theorem~\ref{theorem:constant} translates to a partial answer to their question in the graded context, after restricting to an arbitrary but fixed graded component:

\begin{corollary}
\label{corollary:ext}
Let $R$ be a standard graded polynomial ring that is finitely generated over a field~$R_0$ of characteristic zero. Let $I$ be a homogeneous prime ideal such that $X=\Proj R/I$ is smooth. Fix an integer $j$ and a nonnegative integer $k$. Then the natural map
\[
{\Ext^k_R(R/I^t,R)}_j \to {H^k_I(R)}_j
\]
is injective for $t\gg0$.

When $k=\height I$, the displayed map is injective for each $t\ge1$, whereas if~$k>\height I$, then it is an isomorphism for $t\gg0$. (The modules vanish for $k<\height I$.)

More generally, suppose that~$X$ is lci. Then, for each $k>\height I + \dim(\Sing X)$, the displayed map is an isomorphism for all~$t\gg0$.
\end{corollary}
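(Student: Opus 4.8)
The plan is to deduce the corollary formally from the cohomology stabilization in Theorem~\ref{theorem:constant}, using graded local duality to pass between the $\Ext$ modules over $R$ and the local cohomology modules $H^\bullet_\frakm(R/I^t)$, and Serre's correspondence to pass between the latter and the sheaf cohomology of the thickenings $X_t = \Proj R/I^t$ of $X = \Proj R/I$ in $\PP^n = \Proj R$. Observe first that $\calO_{\PP^n}(j')$ is locally free, hence flat along $X$, so Theorem~\ref{theorem:constant} applies to $\calF = \calO_{\PP^n}(j')$; we may assume $I \ne \frakm$, i.e.\ $X \ne \emptyset$, the remaining case being trivial. Now $R$ is Gorenstein of dimension $n+1$ with graded canonical module $R(-n-1)$, so graded local duality gives, functorially in a finitely generated graded $R$-module $M$, isomorphisms $\Ext^a_R(M,R)_j \cong \Hom_{R_0}(H^{n+1-a}_\frakm(M)_{-j-n-1}, R_0)$. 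Applying this with $M = R/I^t$ and letting $t$ vary identifies the direct system $\{\Ext^k_R(R/I^t,R)_j\}_t$, whose colimit is $H^k_I(R)_j$, with the $R_0$-linear dual of the inverse system $\{H^{n+1-k}_\frakm(R/I^t)_{j'}\}_t$, where $j' = -j-n-1$. Since graded $R_0$-duality is exact and exchanges injections with surjections and isomorphisms with isomorphisms, the natural map $\Ext^k_R(R/I^t,R)_j \to H^k_I(R)_j$ is injective (respectively an isomorphism) precisely when the projection $\varprojlim_s H^{n+1-k}_\frakm(R/I^s)_{j'} \to H^{n+1-k}_\frakm(R/I^t)_{j'}$ is surjective (respectively an isomorphism); this is the translation recorded, in a slightly different form, in Proposition~\ref{proposition:equivalence}.

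Next I would invoke Serre's correspondence. For $n-k \ge 1$ it gives $H^{n+1-k}_\frakm(R/I^t)_{j'} \cong H^{n-k}(X_t, \calO_{X_t}(j'))$; for $n-k = 0$ it gives $H^1_\frakm(R/I^t)_{j'} \cong \operatorname{coker}\big((R/I^t)_{j'} \to H^0(X_t, \calO_{X_t}(j'))\big)$, and $(R/I^t)_{j'} = R_{j'}$ whenever $t > j'$ since $I^t \subseteq \frakm^t$; and for $n-k < 0$ both sides vanish. Because $\codim(\Sing X) = \dim X - \dim(\Sing X) = (n-\height I) - \dim(\Sing X)$, the hypothesis $k > \height I + \dim(\Sing X)$ is exactly the inequality $n-k < \codim(\Sing X)$, so Theorem~\ref{theorem:constant} applied to $\calF = \calO_{\PP^n}(j')$ makes the transition maps $H^{n-k}(X_{t+1}, \calO_{X_{t+1}}(j')) \to H^{n-k}(X_t, \calO_{X_t}(j'))$ isomorphisms for $t \gg 0$ (in the case $n-k = 0$ one passes to cokernels of the square comparing these restriction maps with $R_{j'} = R_{j'}$). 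Hence the inverse system $\{H^{n+1-k}_\frakm(R/I^t)_{j'}\}_t$ has eventually isomorphic transition maps, so its limit maps isomorphically onto every sufficiently late term, and by the first paragraph the natural map $\Ext^k_R(R/I^t,R)_j \to H^k_I(R)_j$ is an isomorphism for $t \gg 0$; this is the last assertion of the corollary. For smooth $X$ the same argument, now using the smooth case $n-k < \dim X$ of Theorem~\ref{theorem:constant}, yields the isomorphism whenever $k > \height I$; and for $k < \height I$ both $\Ext^k_R(R/I^t,R)$ and $H^k_I(R)$ vanish since $\operatorname{grade}(I,R) = \height I$.

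The only remaining case is $k = \height I$ with $X$ smooth, where just injectivity for every $t \ge 1$ is claimed. Here, writing $d = \dim(R/I^t) = \dim(R/I) = n+1-\height I$, I would use that top local cohomology is right exact on finitely generated modules of dimension $\le d$: as $I^t/I^{t+1}$ is an $R/I$-module it has dimension $\le d$, so $H^{d+1}_\frakm(I^t/I^{t+1}) = 0$, and the exact sequence $0 \to I^t/I^{t+1} \to R/I^{t+1} \to R/I^t \to 0$ yields a surjection $H^d_\frakm(R/I^{t+1}) \onto H^d_\frakm(R/I^t)$ for every $t$. Thus $\{H^d_\frakm(R/I^t)_{j'}\}_t$ is a surjective inverse system, its limit surjects onto each term, and by the first paragraph (with $n+1-k = d$) the map $\Ext^{\height I}_R(R/I^t,R)_j \to H^{\height I}_I(R)_j$ is injective for every $t \ge 1$; combined with the previous paragraph and the vanishing below degree $\height I$, injectivity holds for all $k \ge 0$ and $t \gg 0$. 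The substantive work here is entirely carried by Theorem~\ref{theorem:constant}; what needs attention is the numerical translation---that the stabilization range $\codim(\Sing X)$ in that theorem, computed as a codimension inside $X$, corresponds under the dictionary to $k - \height I$---together with the low-degree edge cases of Serre's correspondence and the elementary right-exactness observation needed at $k = \height I$. I expect no serious obstacle beyond this bookkeeping.
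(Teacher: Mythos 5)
Your route is the paper's route: Proposition~\ref{proposition:equivalence} (graded local duality plus the Serre correspondence, with the four-term sequence~\eqref{equation:four:sequence} handling the $H^0/H^1_\frakm$ edge) combined with Theorem~\ref{theorem:constant}, and the numerical translation $n-k<\codim(\Sing X)\Leftrightarrow k>\height I+\dim(\Sing X)$ is exactly the intended bookkeeping. Your treatment of $k=\height I$ differs pleasantly from the paper's: the paper quotes the standard fact that $\Ext^{\operatorname{grade}}_R(M,R)\to H^{\operatorname{grade}}_I(R)$ is injective (via a minimal injective resolution of $R$, cf.\ item~(i) of \S\ref{section:EMS}), whereas you deduce it from the right-exactness of top local cohomology applied to $0\to I^t/I^{t+1}\to R/I^{t+1}\to R/I^t\to 0$ and then dualize; both are fine, and yours is self-contained.

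One step is wrong as stated: ``for $n-k<0$ both sides vanish.'' For $k=n+1$ the relevant local cohomology is $H^0_\frakm(R/I^t)$, equivalently $\Ext^{n+1}_R(R/I^t,R)$, and this need \emph{not} vanish: whenever $\frakm\in\operatorname{Ass}(R/I^t)$ one has $\pd_R R/I^t=n+1$, which happens for all $t\ge 2$ in Example~\ref{example:nonmax}. Since $k=n+1$ always satisfies $k>\height I+\dim(\Sing X)$ for nonempty $X$, this case is within the scope of your isomorphism claims and cannot be dismissed. The conclusion you need is only that the fixed graded piece ${H^0_\frakm(R/I^t)}_{j'}$ vanishes for $t\gg0$; this is true but requires an argument, e.g.: ${H^0_\frakm(R/I^t)}_{j'}=(I^t:\frakm^\infty)_{j'}/(I^t)_{j'}$, and $(I^t:\frakm^\infty)\subseteq I^{(t)}$ since $I$ is prime and $\frakm\not\subseteq I$, while the initial degree of the symbolic power $I^{(t)}$ is at least $t$ (a form of degree $j'$ cannot vanish to order $>j'$ along the positive-dimensional variety $V(I)$), so the graded piece is zero once $t>j'$. (To be fair, Proposition~\ref{proposition:equivalence} only covers $\Ext^{\le n}_R$, so the paper elides the same point; but your proof should include this half-paragraph rather than the false vanishing claim.) Everything else, including the Mittag--Leffler/duality dictionary between ``injective into the colimit'' and ``limit surjects onto the term,'' is correct for these levelwise finite-dimensional systems.
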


Corollary~\ref{corollary:ext} is sharp in the sense that the least $t$ for which injectivity holds in the display depends on $j$, see Example~\ref{example:nonmax}. The bounds on the cohomological degree in terms of the dimension of the singular locus are sharp as well, as with Theorem~\ref{theorem:constant}.

The proof of Theorem~\ref{theorem:constant} occupies~\S\ref{section:constant}; we first record the case of $X$ smooth, so as to set the stage for what follows. Indeed, in the smooth case, the proof is a straightforward consequence of the ampleness of the normal bundle; to extend this to the singular case, we rely on the theory of partially ample vector bundles from~\cite{Arapura:AJM}. Our main result here says, roughly, that sections of symmetric powers of the normal bundle can move freely in $\codim(\Sing X)$ many directions. More precisely, we have:

\begin{theorem}
Let $X$ be a closed lci subvariety of $\PP^n$, with singular locus of dimension~$s$. Then the normal bundle $\calN$ is $s$-ample, i.e., for each coherent sheaf $\calF$ on $X$, we have~$H^i(X, \Sym^t(\calN) \otimes \calF) = 0$ for $i > s$ and $t \gg 0$.

In particular, if $X$ has at most isolated singular points, then $\calN$ is ample.
\end{theorem}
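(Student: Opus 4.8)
The plan is to reduce the $s$-ampleness of $\calN$ to a statement about the projectivized normal bundle $\PP(\calN)$ and then to invoke Arapura's characterization of partially ample bundles from \cite{Arapura:AJM}. Recall that a vector bundle $\calE$ on $X$ is $q$-ample in the sense of \cite{Arapura:AJM} precisely when the tautological line bundle $\calO_{\PP(\calE)}(1)$ on $\PP(\calE)$ is $q$-ample (as a line bundle), and that for line bundles Arapura's notion admits a cohomological criterion: a line bundle $\calL$ is $q$-ample if and only if for every coherent sheaf $\calG$ one has $H^i(\PP(\calE), \calL^{\otimes t}\otimes \calG)=0$ for $i>q$ and $t\gg 0$. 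Since $H^i(X,\Sym^t(\calN)\otimes\calF)$ can be rewritten, via the Leray spectral sequence for $\pi\colon\PP(\calN)\to X$ and the identity $R^0\pi_*\calO_{\PP(\calN)}(t)=\Sym^t(\calN)$, $R^{>0}\pi_*\calO_{\PP(\calN)}(t)=0$, as $H^i(\PP(\calN),\calO_{\PP(\calN)}(t)\otimes\pi^*\calF)$, the desired vanishing will follow once we show $\calO_{\PP(\calN)}(1)$ is $s$-ample on $\PP(\calN)$.

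The key geometric input is a positivity property of $\calN$ coming from the lci hypothesis. First, since $X=V(\calI)$ is lci in $\PP^n$, the conormal sheaf $\calI/\calI^2$ is locally free of rank $c=\codim X$, and $\calN=(\calI/\calI^2)^\vee$; moreover there is a natural surjection $(\calO_X(1))^{\oplus N}\twoheadrightarrow \calN$ for suitable $N$, coming from the fact that $\calI$ is generated in $\PP^n$ by sections of $\calO(d)$ for $d\gg0$ and the Euler-type surjection expressing the normal bundle of $X$ in $\PP^n$ as a quotient — concretely, the embedding $X\hookrightarrow\PP^n$ and the conormal exact sequence $\calI/\calI^2\to\Omega_{\PP^n}|_X\to\Omega_X\to 0$ dualize (using lci, so $\calI/\calI^2$ is locally free) to exhibit $\calN$ as a quotient of the restriction of the normal bundle $T_{\PP^n}|_X$, hence of a twist of a trivial bundle since $T_{\PP^n}$ is a quotient of $\calO_{\PP^n}(1)^{\oplus(n+1)}$. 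This surjection shows that $\calO_{\PP(\calN)}(1)$ is globally generated after twisting by $\pi^*\calO_X(1)$, and that $\calO_{\PP(\calN)}(1)$ is nef away from a locus related to $\Sing X$.

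The main obstacle — and the heart of the argument — is controlling the non-ample locus of $\calO_{\PP(\calN)}(1)$ and matching it with the codimension bound. The point is that $\calN$ restricted to the smooth locus $X_{\mathrm{sm}}=X\setminus\Sing X$ is the honest normal bundle of a smooth lci subvariety, which is ample on any proper smooth subvariety (by Hartshorne's results, since it is globally generated and... actually one must be careful — the normal bundle is only known to be ample on the smooth locus after using that $X$ is projective and lci, via the surjection from a twist of the trivial bundle and semipositivity), so $\calO_{\PP(\calN)}(1)$ is ample on $\pi^{-1}(X_{\mathrm{sm}})$; thus the augmentation ideal/non-ample locus of $\calO_{\PP(\calN)}(1)$ is supported on $\pi^{-1}(\Sing X)$, which has dimension $s + (c-1)$ inside $\PP(\calN)$, but the relevant invariant for $q$-ampleness is the dimension of the base locus over which positivity fails in the Kodaira–Serre sense, and Arapura's theory (specifically the result that a line bundle which is a sum of an ample and a semi-ample-with-small-non-semiample-locus is $q$-ample) gives that $\calO_{\PP(\calN)}(1)$ is $s$-ample because the locus where it fails to be ample maps to something of dimension $\le s$. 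I would make this precise using the criterion from \cite{Arapura:AJM} that if $\calL = \calA \otimes \calM$ with $\calA$ ample and $\calM$ globally generated, and $\calL$ is ample on the complement of a closed set $Z$, then $\calL$ is $(\dim Z)$-ample — applied with $Z=\pi^{-1}(\Sing X)$ is not quite right dimensionally, so instead one argues that $\calL$ restricted to any fiber is ample, hence the higher cohomology is controlled by the base direction $\Sing X$ via the Leray spectral sequence, yielding the bound $i>s$. The final clause, that $\calN$ is ample when $X$ has isolated singularities, is the special case $s=0$, where $0$-ample means ample.
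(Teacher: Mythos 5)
There is a genuine gap, and it sits at the exact point where the theorem's difficulty lies. You assert that dualizing the conormal sequence $\calI/\calI^2\to\Omega^1_{\PP^n}|_X\to\Omega^1_X\to 0$ ``exhibits $\calN$ as a quotient of $\calT_{\PP^n}|_X$, hence of a twist of a trivial bundle.'' This is false when $X$ is singular: since $\Omega^1_X$ is not locally free, dualizing produces the four-term exact sequence
\[
0 \to \underline{\Hom}(\Omega^1_X,\calO_X) \to \calT_{\PP^n}|_X \to \calN \to \underline{\Ext}^1(\Omega^1_X,\calO_X) \to 0,
\]
so the map $\calT_{\PP^n}|_X\to\calN$ has cokernel $\underline{\Ext}^1(\Omega^1_X,\calO_X)$, which is supported on $\Sing X$ and is typically nonzero there. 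Had your surjection existed, $\calN$ would be a quotient of an ample bundle and hence ample outright, contradicting the examples in \S\ref{section:examples} (Examples~\ref{example:normal:not:ample} and~\ref{ex:NormalNotAmpleReduced}) showing that $\calN$ fails to be $(s-1)$-ample; indeed Example~\ref{ex:NormalNotAmpleReduced} obtains the obstruction precisely from the quotient map $\calN\onto\underline{\Ext}^1(\Omega^1_X,\calO_X)$. The same issue undermines your claimed global generation of $\calO_{\PP(\calN)}(1)\otimes\pi^*\calO_X(1)$: from generation of $\calI$ one only gets that a twist of $\calI/\calI^2$ is globally generated, which makes a twist of $\calN$ a \emph{subsheaf}, not a quotient, of a trivial bundle.

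Your fallback --- that $\calO_{\PP(\calN)}(1)$ is ample on fibers and ample over $\pi^{-1}(X_{\mathrm{sm}})$, so its partial ampleness should be governed by $\dim\Sing X$ --- is not a proof; as you concede, the naive dimension count gives $\pi^{-1}(\Sing X)$ of dimension $s+c-1$, and fiberwise ampleness alone only yields $\dim X$-ampleness (push forward by $\pi$ and bound the cohomological dimension of the base). No cited criterion of Arapura performs the gluing you need. The paper instead works with the exact \emph{triangle} $\calT_{\PP^n}|_X\to\calN\to\underline{\R\Hom}(\Omega^1_X,\calO_X)[1]$ in $D(X)$, where the third term lies in $D^{\le 0}$ with $\calH^0$ supported on $\Sing X$, and proves a general lemma (Lemma~\ref{lemma:ample:triangle}): $\Sym^t$ of the middle term of such a triangle admits a finite filtration with graded pieces $\Sym^a(K_1)\otimes\Sym^b(\calH^0 K_3)\otimes\Sym^c(\tau^{<0}K_3)$, each of which has vanishing $H^{>s}$ for $a+b+c\gg 0$ (the $b>0$ pieces by the support condition, the rest by the ampleness of $K_1$ and amplitude bounds). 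Replacing your false surjection with this derived-category filtration is exactly the missing ingredient.
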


In \S\ref{section:kodaira}, we tackle the Kodaira vanishing theorem for thickenings. Recall that for $X$ smooth of characteristic zero, the Kodaira vanishing theorem gives $H^k(X,\calO_X(-j))=0$ for integers~$k$ and $j$ with $k<\dim X$ and $j>0$; more generally, there exists a similar statement for~$k < \codim(\Sing X)$ for Cohen-Macaulay $X$, see \cite{Arapura:PAMS}. In~\S\ref{section:kodaira}, we extend this result to the following version of the Kodaira vanishing theorem for thickenings:

\begin{theorem}
\label{theorem:kodaira}
Let $X$ be a closed lci subvariety of $\PP^n$ over a field of characteristic zero. Then, for each $k$ with $k < \codim(\Sing X)$, and $j > 0$, we have
\[
H^k(X_t, \calO_{X_t}(-j)) = 0.
\]
In particular, if $X$ is smooth or has at most isolated singular points, then the vanishing above holds for each $k<\dim X$ and $j>0$.
\end{theorem}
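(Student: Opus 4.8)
The plan is to reduce the Kodaira vanishing statement for all thickenings $X_t$ to a combination of two ingredients: the stabilization theorem (Theorem~\ref{theorem:constant}) together with a Kodaira-type vanishing on the formal completion, and separately a direct treatment of the ``unstable'' range. First I would observe that for a fixed twist $\calO_{\PP^n}(-j)$ with $j > 0$, which is flat along $X$, Theorem~\ref{theorem:constant} tells us that $H^k(X_t, \calO_{X_t}(-j))$ is independent of $t$ for $t \gg 0$ and $k < \codim(\Sing X)$; moreover this common value computes the cohomology $H^k(\widehat{\PP^n}, \widehat{\calO(-j)})$ of the formal completion of $\PP^n$ along $X$. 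So the heart of the matter splits into (i) proving the vanishing on the formal completion, i.e.\ $H^k(\widehat{\PP^n}, \widehat{\calO(-j)}) = 0$ for $k < \codim(\Sing X)$ and $j>0$, and (ii) propagating the vanishing \emph{downward} from large $t$ to all $t \ge 1$.

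For step (i), the natural approach is to exploit the formal duality / comparison between the formal completion and the punctured affine cone, translating the question into local cohomology: vanishing of $H^k(\widehat{\PP^n}, \widehat{\calO(-j)})$ for all $j>0$ in a range is equivalent (via the cone correspondence, as alluded to around Proposition~\ref{proposition:equivalence}) to a vanishing statement for $H^{k+1}_{\frakm}$ or equivalently $H^{k+1}_I(R)$-type modules attached to $R/I$. Here is where the promised new ingredient enters: a version of the Kodaira--Akizuki--Nakano vanishing theorem for the singular lci variety $X$ phrased via the cotangent complex $L_{X}$ (or its exterior powers / the derived de Rham complex). Concretely, I would use that for lci $X$ the cotangent complex is perfect with amplitude in $[-1,0]$, build the relevant ``Hodge'' pieces $\wedge^p L_X(-j)$, and invoke the char-zero Hodge-theoretic vanishing (in the spirit of \cite{Arapura:PAMS} for Cohen--Macaulay $X$, but now upgraded to the derived/lci setting) to kill $H^k$ of each graded piece in the range $k < \codim(\Sing X)$. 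Degenerating the resulting spectral sequence (Hodge-to-de Rham, which degenerates in char zero) then yields the vanishing of $H^k(\widehat{\PP^n}, \widehat{\calO(-j)})$.

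For step (ii), propagating down to small $t$, I would set up the short exact sequences $0 \to \calI^t/\calI^{t+1} \to \calO_{X_{t+1}} \to \calO_{X_t} \to 0$ twisted by $\calO(-j)$, and induct. Since $X$ is lci, the conormal sheaf $\calN^\vee = \calI/\calI^2$ is locally free on $X$, and $\calI^t/\calI^{t+1} \cong \Sym^t(\calN^\vee)$. The $s$-ampleness of $\calN$ from the displayed theorem gives, dually, that $\Sym^t(\calN^\vee)$ has its \emph{higher} cohomology controlled, but for Kodaira vanishing I actually want vanishing of $H^k(X, \Sym^t(\calN^\vee)(-j))$ for $k < \codim(\Sing X)$: this should again follow from the cotangent-complex KAN vanishing applied to the graded pieces, or be folded directly into the spectral sequence computing $H^*(X_t, \calO_{X_t}(-j))$ from $H^*(X, \Sym^\bullet(\calN^\vee)(-j))$. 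Chasing the long exact sequences then transfers vanishing between consecutive thickenings, and combined with the known $t = 1$ case (Kodaira vanishing on the Cohen--Macaulay, indeed lci, variety $X$ from \cite{Arapura:PAMS}) and the large-$t$ stabilized vanishing from steps (i) and Theorem~\ref{theorem:constant}, one closes the induction in both directions to cover every $t \ge 1$.

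The main obstacle I anticipate is step (i), specifically formulating and proving the correct cotangent-complex version of Kodaira--Akizuki--Nakano vanishing for singular lci varieties and verifying that the associated Hodge-to-de Rham spectral sequence degenerates and has the right entries vanishing in the stated range. The subtlety is that $X$ is singular, so one cannot invoke the classical smooth-case KAN directly; one must work with the (perfect, lci) cotangent complex and its derived exterior powers, control the local cohomology of these complexes near $\Sing X$ (which is exactly what forces the bound $k < \codim(\Sing X)$), and ensure the char-zero degeneration results apply to this derived setup. The downward propagation in step (ii) is more routine provided the requisite vanishing for the symmetric powers $\Sym^t(\calN^\vee)(-j)$ is available, which itself is a special case of the same KAN-type input.
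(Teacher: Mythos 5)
Your overall skeleton --- inducting on $t$ via the exact sequences $0 \to \Sym^t(\calI/\calI^2)(-j) \to \calO_{X_{t+1}}(-j) \to \calO_{X_t}(-j) \to 0$ and reducing everything to the vanishing of $H^k(X,\Sym^t(\calI/\calI^2)(-j))$ for $k<\codim(\Sing X)$, which in turn rests on a cotangent-complex version of Kodaira--Akizuki--Nakano --- is the paper's route. But your step (i) (vanishing on the formal completion via stabilization, followed by ``downward propagation'') is both unnecessary and gappy: from $H^k(X_{t+1},\calO_{X_{t+1}}(-j))=0$ the long exact sequence only embeds $H^k(X_t,\calO_{X_t}(-j))$ into $H^{k+1}(X,\Sym^t(\calI/\calI^2)(-j))$, so downward propagation loses a cohomological degree at the top of the range. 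The correct and simpler direction is upward induction from $t=1$ (the base case being the $b=0$ instance of the KAN statement, or Arapura--Jaffe); the formal completion and Theorem~\ref{theorem:constant} play no role in the paper's proof of Theorem~\ref{theorem:kodaira}.

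More seriously, the actual content --- $H^a(X,\wedge^b L_X(-j))=0$ for $a+b<\codim(\Sing X)$ and $j>0$ --- is asserted rather than proved, and the mechanism you propose (Hodge-to-de Rham degeneration for a derived de Rham complex of the singular $X$) is not where the bound $\codim(\Sing X)$ comes from and is not obviously available. The paper's argument is concrete and different: (a) since $X$ is Cohen--Macaulay one has $\underline{R\Gamma}_{Z}(\calO_X)\in D^{\ge d-s}(X)$ for $Z=\Sing X$, and $\wedge^b L_X$ has Tor-amplitude in $[-b,0]$, so the restriction map $H^a(X,\wedge^b L_X(-j))\to H^a(U,\Omega^b_U(-j))$ to the smooth locus $U$ is injective for $a<d-s-b$; (b) this map factors through $H^a(Y,\Omega^b_Y(\log E)\otimes\pi^*\calO_X(-j))$ for a log resolution $\pi\colon Y\to X$, which vanishes by Steenbrink's theorem. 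You would need to supply an argument of this kind. Finally, reducing $\Sym^t(\calI/\calI^2)$ to the groups $H^a(X,\wedge^b L_X(-j))$ is itself a nontrivial step: one filters $\Sym^t(\calI/\calI^2)$ using the transitivity triangle $L_X[-1]\to\calI/\calI^2\to\Omega^1_{\PP^n}|_X$, obtaining graded pieces $\Sym^a(\Omega^1_{\PP^n}|_X)\otimes\wedge^b L_X[-b]$, and then resolves $\Sym^a(\Omega^1_{\PP^n})$ by twists of $\calO_{\PP^n}$ via the Euler sequence so that the $\Omega^1_{\PP^n}$-factors become the twists $-j-a$ and $-j-a+1$ to which the KAN vanishing applies; your sketch gestures at this reduction but omits the Euler-sequence step that makes it work.
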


This result was motivated by the question of \emph{effectivity} in Theorem~\ref{theorem:constant}: for $\calF = \calO_{\PP^n}(j)$ with $j < 0$, Theorem~\ref{theorem:kodaira} implies that the map
\[
H^k(X_{t+1},\calO_{X_{t+1}}(j)) \to H^k(X_t,\calO_{X_t}(j))
\]
in Theorem~\ref{theorem:constant} is an isomorphism for \emph{all} $t$, as the groups vanish. The same conclusion obviously cannot hold for $j > 0$. Nevertheless, for all $j \in \ZZ$ and $k < \codim(\Sing X) - 1$, we have \emph{effectivity} in the following sense: the map displayed above is an isomorphism for integers $t \ge t_0(j)$, where
\[
t_0(j) := \dim X + j + 2
\]
(see Remark~\ref{remark:effective:constancy} for more). In particular, the bound $\dim X + j + 2$ suffices for any embedding of the underlying variety $X$.

The main tool used to prove Theorem~\ref{theorem:kodaira} for $X$ smooth is the Kodaira-Akizuki-Nakano (KAN) vanishing theorem; in fact, the two results are essentially equivalent. Thus, to prove Theorem~\ref{theorem:kodaira} in the singular case, we need a version of the KAN theorem for singular spaces. To the best of our knowledge, known results in this direction do not suffice for this purpose; see \S\ref{section:kodaira} for further discussion. Nevertheless, we prove the following version of the KAN theorem, formulated in terms of the cotangent complex $L_X$ and its \emph{derived} exterior powers, which suffices for the application in the proof of Theorem~\ref{theorem:kodaira}:

\begin{theorem}
\label{theorem:KAN:vanishing}
Let $X$ be a closed lci subvariety of $\PP^n$ over a field of characteristic zero.~Then
\[
H^a(X, \wedge^b L_X(-j)) = 0
\]
for $a+b < \codim(\Sing X)$ and $j > 0$. 
\end{theorem}

The statement is also valid for $X$ non-reduced (and still carries some content), and is fairly optimal: the lci assumption is necessary, and the ``Serre dual'' formulation of the smooth case, which says
\[
H^a(X, \wedge^b L_X(j)) = 0
\]
for $a + b > \dim X$ and $j > 0$, fails dramatically in the singular case; see \S\ref{section:kodaira} for a further discussion of these issues, with examples.

\begin{remark}
In the situation of Theorem~\ref{theorem:KAN:vanishing}, results of Lebelt \cite{Lebelt75,Lebelt77}, along with the Auslander-Bridger syzygy theorem \cite[Theorem~3.8]{Evans-Griffith}, imply that $\wedge^b L_X$ coincides with $\Omega^b$ for small $b$ (roughly $b < d$). Thus, in this range, the above vanishing theorem is a statement about a coherent sheaf. For large $b$, this need not be the case, \emph{cf.}~\cite[Lemma~2.1]{Bhatt:torsion}. We thank Lucho Avramov for bringing Lebelt's work to our attention.
\end{remark}

%%%%%%%%%%%%%%%%%%%%%%%%%%%%%%%%%%%%%%%%%%%%%%%%%%%
\section{Stabilization of the cohomology of thickenings}
\label{section:constant}
%%%%%%%%%%%%%%%%%%%%%%%%%%%%%%%%%%%%%%%%%%%%%%%%%%%

We begin by recording some equivalent formulations of the assertion of Theorem~\ref{theorem:constant}; in particular, the following proposition shows that Corollary~\ref{corollary:ext} follows from Theorem~\ref{theorem:constant}.

\begin{proposition}
\label{proposition:equivalence}
Let $R=\FF[x_0,\dots,x_n]$ be a standard graded polynomial ring over a field~$\FF$, and let $I$ be a homogeneous ideal. Set $X=\Proj R/I$ and fix an integer $\ell$.

Then the following are equivalent:
\begin{enumerate}[\quad\rm(1)]
\item For each integer $k$ with $k<\ell$, and each $j$, the maps
\[
H^k(X_{t+1},\calO_{X_{t+1}}(j)) \to H^k(X_t,\calO_{X_t}(j))
\]
are isomorphisms for all $t\gg0$.

\item For each integer $k$ with $k<\ell$, and each degree $j$, the maps
\[
{H^{k+1}_\frakm(R/I^{t+1})}_j \to {H^{k+1}_\frakm(R/I^t)}_j
\]
are isomorphisms for all $t\gg0$, where $\frakm$ is the homogeneous maximal ideal of $R$.

\item For each integer $k$ with $k<\ell$, and each degree $j$, the maps
\[
{\Ext^{n-k}_R(R/I^t,R)}_j \to {\Ext^{n-k}_R(R/I^{t+1},R)}_j
\]
are isomorphisms for all $t\gg0$.
\end{enumerate}
\end{proposition}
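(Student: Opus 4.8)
The plan is to deduce (2) $\Leftrightarrow$ (3) from graded local duality and (1) $\Leftrightarrow$ (2) from the comparison between local cohomology over $R$ and sheaf cohomology on $\PP^n = \Proj R$, with a small extra argument needed for the value $k = 0$. Throughout one may assume $I \subsetneq R$, the case $I = R$ being vacuous, so that $I \subseteq \frakm$.

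For (2) $\Leftrightarrow$ (3): since $R$ is a polynomial ring in $n+1$ variables it is Gorenstein with $\omega_R \cong R(-n-1)$, so graded local duality yields, for every finitely generated graded $R$-module $M$ and every $i$, a natural isomorphism of $H^i_\frakm(M)$ with the graded Matlis dual of $\Ext^{n+1-i}_R(M, R(-n-1))$. Applying this to $M = R/I^t$ with $i = k+1$, and tracking the degree shift from the twist $R(-n-1)$ and the sign reversal from Matlis duality, one finds that $H^{k+1}_\frakm(R/I^t)_j$ is $\FF$-dual to $\Ext^{n-k}_R(R/I^t, R)_{-j-n-1}$, compatibly with the maps induced by the surjections $R/I^{t+1} \twoheadrightarrow R/I^t$; the contravariance of Matlis duality is exactly why the transition maps in (3) point the other way. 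Since graded Matlis duality is exact and faithful and all modules involved are finite-dimensional in each fixed degree, a transition map in the system of (2) is an isomorphism if and only if the corresponding one in (3) is, and since both statements range over all degrees, (2) $\Leftrightarrow$ (3).

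For (1) $\Leftrightarrow$ (2): Serre's comparison theorem gives, for $M$ finitely generated graded with associated sheaf $\tilde M$ on $\PP^n$, a natural four-term exact sequence
\[
0 \to H^0_\frakm(M) \to M \to \bigoplus_j H^0(\PP^n, \tilde M(j)) \to H^1_\frakm(M) \to 0
\]
together with natural isomorphisms $H^i_\frakm(M) \cong \bigoplus_j H^{i-1}(\PP^n, \tilde M(j))$ for $i \ge 2$. Taking $M = R/I^t$, so that $\tilde M$ is the pushforward of $\calO_{X_t}$ and $H^i(\PP^n, \tilde M(j)) = H^i(X_t, \calO_{X_t}(j))$, the isomorphisms for $i = k+1 \ge 2$ match the systems of (1) and (2) degree by degree, so the parts of (1) and (2) with $1 \le k < \ell$ coincide. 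For $k = 0$ one compares $\{H^0(X_t, \calO_{X_t}(j))\}_t$ with $\{H^1_\frakm(R/I^t)_j\}_t$ through the four-term sequence, and the key observation is that its first two terms have eventually constant transition maps in each fixed degree $j$: because $I^t \subseteq \frakm^t$ we have $(R/I^t)_j = R_j$ for all $t > j$, while for $t > j$ the map $H^0_\frakm(R/I^{t+1})_j \to H^0_\frakm(R/I^t)_j$ is merely the inclusion $\{f \in R_j : f\frakm^N \subseteq I^{t+1} \text{ for some } N\} \hookrightarrow \{f \in R_j : f\frakm^N \subseteq I^t \text{ for some } N\}$, a descending chain of subspaces of the finite-dimensional space $R_j$ and hence eventually stationary. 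Splitting off the short exact sequence $0 \to (R/I^t)/H^0_\frakm(R/I^t) \to \bigoplus_j H^0(X_t, \calO_{X_t}(j)) \to H^1_\frakm(R/I^t) \to 0$ and applying the five lemma to the resulting ladder in $t$ — in which, in each degree, two of the three columns are eventually isomorphisms — one concludes that the middle column has eventually isomorphic transition maps if and only if the right-hand one does. This yields the $k = 0$ part of (1) $\Leftrightarrow$ (2) and completes the proof.

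I expect the only delicate point to be this $k = 0$ bookkeeping: recognizing $H^0_\frakm(R/I^t)_j$ as a descending chain of subspaces of $R_j$ once $t > j$, and carrying ``eventually an isomorphism'' through the five lemma along the inverse system. The remaining implications are formal once one correctly tracks the twists and the direction of the arrows in local duality and in the Serre comparison.
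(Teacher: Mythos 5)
Your proof is correct and follows essentially the same route as the paper's: graded local duality for (2) $\Leftrightarrow$ (3), the identification $H^k(X_t,\calO_{X_t}(j))={H^{k+1}_\frakm(R/I^t)}_j$ for $k\ge 1$, and the four-term Serre comparison sequence for $k=0$. Your extra bookkeeping in the $k=0$ case --- that ${H^0_\frakm(R/I^t)}_j$ is an eventually stationary descending chain of subspaces of $R_j$ for $t>j$, so the snake/five-lemma argument applies --- is a detail the paper leaves implicit (it only remarks that $(R/I^t)_j$ stabilizes), and you handle it correctly.
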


\begin{proof}
For the equivalence of (1) and (2), note that for $k\ge 1$, one has
\[
H^k(X_t,\calO_{X_t}(j)) = {H^{k+1}_\frakm(R/I^t)}_j,
\]
while the remaining case follows from the exact sequence
\begin{equation}
\label{equation:four:sequence}
0 \to {H^0_\frakm(R/I^t)}_j \to {(R/I^t)}_j \to H^0(X_t,\calO_{X_t}(j)) \to {H^1_\frakm(R/I^t)}_j \to 0,
\end{equation}
since, for each $j$, one has $(R/I^t)_j=(R/I^{t+1})_j$ for $t\gg0$.

The equivalence of (2) and (3) is immediate from graded local duality, see for example \cite[Theorem~3.16.19]{Bruns-Herzog}: applying the dualizing functor $(-)^\vee$ to
\[
{H^{k+1}_\frakm(R/I^{t+1})}_j \to {H^{k+1}_\frakm(R/I^t)}_j
\]
yields the map
\[
{\Ext^{n-k}_R(R/I^t, R)}_{-n-1-j} \to {\Ext^{n-k}_R(R/I^{t+1}, R)}_{-n-1-j}.\qquad\qedhere
\]
\end{proof}

We next record a finiteness result for the cohomology of the formal completion of $\PP^n$ along a closed subvariety $X$:

\begin{theorem}
\label{theorem:formal:finite}
Let $X$ be a closed subscheme of $\PP^n$ over a field $\FF$. Let~$\calF$ be a coherent sheaf on $\PP^n$ that is flat along $X$. Set $\calF_t=\calO_{X_t}\otimes_{\calO_{\PP^n}}\calF$ to be the induced sheaf on $X_t$. Suppose~$d$ is a positive integer such that the local cohomology sheaves $\calH^i_X(\PP^n, \calO_{\PP^n})$ of $\calO_{\PP^n}$ with support in $X$ vanish for all $i>n-d$. Then, for each integer $k$ with $k<d$, the inverse limit
\[
\ilim_t H^k(X_t,\calF_t)
\]
is a finite rank $\FF$-vector space. If $\calF=\calO_{\PP^n}(j)$ for $j<0$, this inverse limit vanishes.
\end{theorem}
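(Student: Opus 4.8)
The plan is to use Serre duality on $\PP^n$ to convert the statement about the formal completion of $\PP^n$ along $X$ into one about local cohomology \emph{with supports} in $X$, and then to feed the hypothesis on the sheaves $\calH^i_X(\PP^n,\calO_{\PP^n})$ into the exact triangle relating cohomology on $\PP^n$, local cohomology along $X$, and cohomology on the open complement $\PP^n\setminus X$. The line-bundle vanishing in the last sentence should then drop out of the vanishing of $H^m(\PP^n,\calO_{\PP^n}(\ell))$ for $1\le m\le n$ and $\ell\ge -n$.

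\textbf{Duality reduction.} First I would observe that $H^k(X_t,\calF_t)=H^k(\PP^n,\calF_t)$ (the closed immersion $X_t\into\PP^n$ being affine), that these groups are finite dimensional, hence the inverse system is Mittag--Leffler, and therefore $\ilim_t H^k(X_t,\calF_t)\cong\bigl(\dlim_t H^k(X_t,\calF_t)^\vee\bigr)^\vee$. By Serre duality, $H^k(\PP^n,\calF_t)^\vee\cong\Ext^{n-k}_{\PP^n}(\calF_t,\omega_{\PP^n})$; since $\calF$ is flat along $X$ one has $\calF_t=\calF\otimes^{\mathrm{L}}\calO_{X_t}$, so that $\Ext^{n-k}_{\PP^n}(\calF_t,\omega_{\PP^n})\cong\Ext^{n-k}_{\PP^n}\bigl(\calO_{X_t},\R\calHom_{\calO_{\PP^n}}(\calF,\omega_{\PP^n})\bigr)$, and passing to the colimit over $t$ identifies $\dlim_t H^k(X_t,\calF_t)^\vee$ with the hypercohomology $H^{n-k}_X(\PP^n,\calG)$ with supports in $X$ of the complex $\calG:=\R\calHom_{\calO_{\PP^n}}(\calF,\omega_{\PP^n})$ (which has coherent cohomology, and equals $\calF^\vee\otimes\omega_{\PP^n}$ when $\calF$ is locally free). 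Thus it suffices to prove: (a) $H^m_X(\PP^n,\calG)$ is finite dimensional for all $m>n-d$; and (b) for $\calF=\calO_{\PP^n}(j)$ with $j<0$, the group $H^m_X(\PP^n,\calG)$ vanishes for all $m>n-d$.

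\textbf{The two claims.} For (a) I would use the distinguished triangle $\R\Gamma_X(\PP^n,\calG)\to\R\Gamma(\PP^n,\calG)\to\R\Gamma(\PP^n\setminus X,\calG)$. The middle term has finite dimensional cohomology in every degree, since $\PP^n$ is proper and $\calG$ has coherent cohomology; so it is enough to show that $\R\Gamma(\PP^n\setminus X,\calG)$ has finite dimensional cohomology in every degree $\ge n-d$. Here the hypothesis is essential: for $j\colon\PP^n\setminus X\into\PP^n$ one has $R^qj_*=\calH^{q+1}_X(\PP^n,-)$ for $q\ge1$, and the vanishing of $\calH^i_X(\PP^n,\calO_{\PP^n})$ for $i>n-d$ (hence of $\calH^i_X(\PP^n,\calG)$ for $i>n-d$, as $\calG$ is perfect) kills these higher direct images for $q\ge n-d$, while $\calH^0_X(\PP^n,\calG)=0$ since $\calG$ is torsion-free. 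Since all the surviving local cohomology sheaves are supported on $X$, the spectral sequence $H^p(\PP^n,R^qj_*\calG)\Rightarrow H^{p+q}(\PP^n\setminus X,\calG)$ isolates the potentially infinite-dimensional part of $H^m(\PP^n\setminus X,\calG)$ as coming only from the groups $H^p\bigl(\PP^n,\calH^i_X(\PP^n,\calG)\bigr)$ with $i\le n-d$; controlling these in the correct range gives finiteness for $m\ge n-d$, and the triangle then yields (a). For (b), the same triangle gives an injection $H^m_X(\PP^n,\calO_{\PP^n}(\ell))\into H^m(\PP^n,\calO_{\PP^n}(\ell))$ once $H^{m-1}(\PP^n\setminus X,\calO_{\PP^n}(\ell))=0$, where $\ell=-j-n-1\ge -n$; since $m=n-k$ lies in $[1,n]$ (as $0\le k<d\le n$) and $H^m(\PP^n,\calO_{\PP^n}(\ell))=0$ for $1\le m\le n$ and $\ell\ge -n$, this group vanishes, and hence so does $\ilim_t H^k(X_t,\calF_t)$.

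\textbf{Main obstacle.} I expect the sharpness in step (a) to be the hard part. The bound one gets by naively running the $Rj_*$-spectral sequence is too weak --- it is off by roughly $\dim X$, since the local cohomology sheaves are quasi-coherent and supported on a positive-dimensional $X$, so one cannot simply invoke ``$\PP^n\setminus X$ has small cohomological dimension.'' The real content is that the infinite-dimensional part of the cohomology of the complement is concentrated in degrees strictly below $n-d$; establishing this requires using not merely the degreewise vanishing of the sheaves $\calH^i_X(\PP^n,\calO_{\PP^n})$, but also that they are supported on $X$, together with the properness of $\PP^n$ (so that the ``coherent part'' of every cohomology group is automatically finite dimensional), and possibly a positivity input controlling $H^p$ of these sheaves for $p$ large. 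Everything else --- Serre duality turning the completion along $X$ into local cohomology along $X$, the Mittag--Leffler interchange, and the line-bundle computation in (b) --- is formal.
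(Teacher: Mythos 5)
Your duality reduction is sound and is essentially the paper's own first move (graded local duality over $R=\FF[x_0,\dots,x_n]$ identifies $\ilim_t H^k(X_t,\calO_{X_t}(j))$ with a graded piece of $H^{n-k}_I(R)^\vee$), but the proof stops exactly where the theorem's content begins. Both of your claims (a) and (b) reduce to statements about $H^m_X(\PP^n,\calG)$ for $m>n-d$ that are \emph{not} formal consequences of the hypothesis plus properness of $\PP^n$: in (a) you must control $H^p(\PP^n,\calH^q_X(\calG))$ for $p\ge 1$ and $q\le n-d$, where the sheaves $\calH^q_X(\calG)$ are only quasi-coherent, and you explicitly concede in your last paragraph that you do not know how to do this; in (b) the injection $H^m_X(\PP^n,\calO(\ell))\into H^m(\PP^n,\calO(\ell))$ requires $H^{m-1}(\PP^n\setminus X,\calO(\ell))=0$, which you assume without proof and which is of the same order of difficulty as the statement you are trying to prove (for $2\le m\le n$ the long exact sequence in fact gives $H^m_X(\PP^n,\calO(\ell))\cong H^{m-1}(\PP^n\setminus X,\calO(\ell))$, so you would be assuming the conclusion).

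The missing ingredient is the finiteness theory for local cohomology modules of Lyubeznik (characteristic zero, via $D$-modules) and Huneke--Sharp (positive characteristic, via Frobenius). The hypothesis that $\calH^i_X(\PP^n,\calO_{\PP^n})=0$ for $i>n-d$ says precisely that the graded module $H^i_I(R)$ is supported at $\{\frakm\}$ for such $i$; the cited theory then gives that its injective dimension is at most its dimension (hence $H^i_I(R)$ is injective) and that its Bass numbers are finite, so $H^i_I(R)\cong H^{n+1}_\frakm(R)^{\oplus\mu}$ with $\mu<\infty$, and this isomorphism is degree-preserving. This single structural fact delivers both halves at once: each graded piece of $H^i_I(R)$ is finite-dimensional (giving your (a) after dualizing), and $H^{n+1}_\frakm(R)$ vanishes in degrees $>-n-1$ (giving your (b)). Without an input of this depth --- it is a theorem about $D$-modules or $F$-modules, not about positivity or the geometry of $\PP^n\setminus X$ --- neither the finiteness nor the vanishing can be extracted from the local-to-global spectral sequence. (The reduction from general $\calF$ flat along $X$ to twists of $\calO_{\PP^n}$, which the paper does by resolving $\calF^\vee$ and dualizing, is routine by comparison.)
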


\begin{proof}
Let $R=\FF[x_0,\dots,x_n]$, and let $I$ be the homogeneous ideal of $R$ defining $X$. We first consider the case $\calF=\calO_{\PP^n}(j)$. Fix an integer $i$ with $i>n-d$. The local cohomology module $H^i_I(R)$ is a graded $R$-module, with $\calH^i_X(\PP^n, \calO_{\PP^n})$ the associated quasi-coherent sheaf on $\PP^n$. Since $\calH^{i}_X(\PP^n, \calO_{\PP^n})$ vanishes, the support of $H^i_I(R)$ must be a subset of~$\{\frakm\}$. 

The injective dimension of $H^i_I(R)$ as an $R$-module is bounded by its dimension: this is \cite[Corollary~3.6]{Lyubeznik:Invent} in the case that $\FF$ has characteristic zero, and \cite[Corollary~3.7]{HS:TAMS} in the case of positive characteristic; a characteristic-free proof is provided in \cite{Lyubeznik:injective}. Thus,~$H^i_I(R)$ is injective with $\Supp H^i_I(R)\subseteq\{\frakm\}$. It follows that~$H^i_I(R)$ is a direct sum of copies of~$H^{n+1}_\frakm(R)$; moreover, it is a finite direct sum by the finiteness of the Bass numbers of~$H^i_I(R)$, proved in the same references. The isomorphism
\begin{equation}
\label{equation:bass}
H^i_I(R) \cong H^{n+1}_\frakm(R)^{\oplus\mu}
\end{equation}
is degree-preserving by \cite[Theorem~1.1]{Ma-Zhang} and \cite[Theorem~3.4]{YiZhang}. Since each graded component of $H^{n+1}_\frakm(R)$ has finite rank, the same is true of $H^i_I(R)$. Note that the graded canonical module of $R$ is $\omega_R=R(-n-1)$. Let $j$ be an arbitrary integer. Applying the dualizing functor to the inductive system
\[
\cdots\to {\Ext^i_R(R/I, \omega_R)}_{-j} \to {\Ext^i_R(R/I^2, \omega_R)}_{-j} \to {\Ext^i_R(R/I^3, \omega_R)}_{-j} \to\cdots,
\]
which has limit ${H^i_I(\omega_R)}_{-j} = {H^i_I(R)}_{-j-n-1}$, shows that the inverse limit
\[
\ilim_t {H^{n+1-i}_\frakm(R/I^t)}_j = {(H^i_I(R)^\vee)}_{j+n+1}
\]
has finite rank. Set $k=n-i$ above; the condition $i>n-d$ translates as $k<d$. For~$k\ge 1$, it follows that
\[
\ilim_t H^k(X_t,\calO_{X_t}(j)) = \ilim_t H^{k+1}_\frakm(R/I^t)_j
\]
has finite rank. The remaining case, $k=0$, follows from the exact sequence~\eqref{equation:four:sequence}, since a projective system of finite rank vector spaces satisfies the Mittag-Leffler condition.

Lastly, suppose $j<0$. Then, for each $k<d$, one has
\[
\ilim_t H^k(X_t,\calO_{X_t}(j)) = \ilim_t H^{k+1}_\frakm(R/I^t)_j = ({H^{n-k}_I(R)}_{-j-n-1})^\vee
\]
but this vanishes since
\[
{H^{n-k}_I(R)}_{-j-n-1} \cong {H^{n+1}_\frakm(R)^\mu}_{-j-n-1}
\]
by~\eqref{equation:bass}, and $H^{n+1}_\frakm(R)$ vanishes in degrees greater that $-n-1$. This completes the proof in the case that $\calF=\calO_{\PP^n}(j)$.

We now handle the general case where $\calF$ is an arbitrary coherent sheaf on $\PP^n$ that is a vector bundle on a Zariski open neighborhood $U$ of $X$ in $\PP^n$. Let $\calF^\vee := \calHom_{\calO_{\PP^n}}(\calF,\calO_{\PP^n})$ be the dual of $\calF$, and note that $\calF^\vee$ is also a coherent sheaf that is a vector bundle over~$U$. Let $G^\bullet \to \calF^\vee$ be a finite resolution of $\calF^\vee$ with each $G^i$ being a finite direct sum of copies of twists of $\calO_{\PP^n}$, and $G^i = 0$ for $i > 0$ or $i < -N$ for some $N$. Dualizing, we obtain a complex $K^\bullet := \calHom_{\calO_{\PP^n}}(G^\bullet, \calO_{\PP^n})$ with the following properties:
\begin{enumerate}[\rm(i)]
\item Each $K^i$ is a finite direct sum of copies of twists of $\calO_{\PP^n}$.
\item $K^i = 0$ for $i < 0$ or $i > N$.
\item The complex $K^\bullet$ is quasi-isomorphic to $R\calHom_{\calO_{\PP^n}}(\calF^\vee,\calO_{\PP^n})$.
\end{enumerate}

In particular, since $\calF$ and $\calF^\vee$ are vector bundles over $U$, we have
\[
\calF|_U \simeq R\calHom_{\calO_{\PP^n}}(\calF^\vee,\calO_{\PP^n}))|_U,
\]
and thus
\[
K^\bullet \otimes_{\calO_{\PP^n}} \calO_{X_t} \simeq \calF \otimes_{\calO_{\PP^n}} \calO_{X_t} =: \calF_t
\]
for all $t$. Now (i) above and the previous case imply that
\[
\ilim H^i(X_t, K^a \otimes_{\calO_{\PP^n}} \calO_{X_t})
\]
has finite rank for each $a$ and each $i < d$. By Lemma~\ref{lemma:proj:system:complex}, this implies that the complex
\[
R\ilim R\Gamma(X_t, K^a \otimes_{\calO_{\PP^n}} \calO_{X_t})
\]
has $H^i$ of finite rank for $i < d$. Using (ii) and a standard spectral sequence, this implies that
\[
R\ilim R\Gamma(X_t, K^\bullet \otimes_{\calO_{\PP^n}} \calO_{X_t})
\]
has a finite rank $H^i$ for $i < d$. Using (iii), we deduce that
\[
R\ilim R\Gamma(X_t, \calF_t)
\]
has a finite rank $H^i$ for $i < d$. By Lemma~\ref{lemma:proj:system:complex}, this immediately shows that
\[
\ilim H^i(X_t, \calF_t)
\]
has finite rank for $i < d$, completing the proof.
\end{proof}

We used the following standard homological fact above:

\begin{lemma}
\label{lemma:proj:system:complex}
Fix a projective system $\{L_t\}$ of complexes of vector spaces over a field, and a cohomological degree $i$. Then there exists a Milnor exact sequence
\[
0 \to R^1 \ilim H^{i-1}(L_t) \to H^i(R\ilim L_t) \to \ilim H^i(L_t) \to 0.
\]
Moreover, if each $H^{i-1}(L_t)$ is finite rank, then the projective system $\{H^{i-1}(L_t)\}$ satisfies the Mittag-Leffler condition, and thus has a vanishing $R^1 \ilim$. 
\end{lemma}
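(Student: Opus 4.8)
The plan is to establish this as a routine consequence of the standard Milnor sequence for homotopy limits of towers, combined with the elementary observation that a tower of finite-dimensional vector spaces is automatically Mittag-Leffler.

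First I would recall the construction of $R\ilim$ for a projective system $\{L_t\}$ indexed by the natural numbers: it is computed by the mapping fiber of the ``shift minus identity'' map
\[
\prod_t L_t \xrightarrow{\ \mathrm{id} - \mathrm{shift}\ } \prod_t L_t,
\]
so that $R\ilim L_t$ sits in a distinguished triangle together with $\prod_t L_t$ (twice). Taking the long exact cohomology sequence of this triangle, and using that in the category of vector spaces products are exact so that $H^i(\prod_t L_t) = \prod_t H^i(L_t)$, one gets a four-term exact sequence
\[
0 \to \operatorname{coker}\big(\textstyle\prod_t H^{i-1}(L_t) \xrightarrow{\mathrm{id}-\mathrm{shift}} \prod_t H^{i-1}(L_t)\big) \to H^i(R\ilim L_t) \to \ker\big(\textstyle\prod_t H^i(L_t) \xrightarrow{\mathrm{id}-\mathrm{shift}} \prod_t H^i(L_t)\big) \to 0.
\]
One then identifies the kernel term with $\ilim H^i(L_t)$ and the cokernel term with $R^1\ilim H^{i-1}(L_t)$, which is exactly the definition of the first derived functor of inverse limit for a tower. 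This yields the asserted Milnor exact sequence. (Alternatively, one can simply cite this as a standard fact, e.g. from Weibel's homological algebra text or the Stacks Project; either route is acceptable and I would keep the argument to a sentence or two.)

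For the second assertion, suppose each $H^{i-1}(L_t)$ is a finite-rank vector space. For a tower of finite-dimensional vector spaces, the images $\mathrm{im}\big(H^{i-1}(L_{t'}) \to H^{i-1}(L_t)\big)$ for $t' \ge t$ form a descending chain of subspaces of the finite-dimensional space $H^{i-1}(L_t)$, hence stabilize; this is precisely the Mittag-Leffler condition, and a Mittag-Leffler tower has vanishing $R^1\ilim$. Feeding this back into the Milnor sequence collapses it to the isomorphism $H^i(R\ilim L_t) \cong \ilim H^i(L_t)$ that is used repeatedly in the proof of Theorem~\ref{theorem:formal:finite}.

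There is essentially no obstacle here — the only mild subtlety is bookkeeping the indexing convention (the tower is indexed so that transition maps go $L_{t+1} \to L_t$) and making sure the shift map is set up accordingly, but this is purely cosmetic. The statement is genuinely ``standard homological algebra,'' and the main value of isolating it as a lemma is to make the citations in the proof of Theorem~\ref{theorem:formal:finite} clean.
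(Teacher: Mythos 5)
The paper states this lemma without proof, simply labelling it a ``standard homological fact,'' and your argument is exactly the standard one it has in mind: the fiber sequence $R\ilim L_t \to \prod_t L_t \xrightarrow{\mathrm{id}-\mathrm{shift}} \prod_t L_t$, exactness of products of vector spaces, and the descending-chain argument for Mittag-Leffler. Your proof is correct and complete; nothing further is needed.
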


The following result may be well-known to experts; however, for lack of a reference, we include it here as a corollary of Theorem~\ref{theorem:formal:finite}:

\begin{corollary}
\label{corollary:finite:rank}
Let $X$ be a closed subscheme of $\PP^n$ over a field $\FF$, and let~$\calF$ be a coherent sheaf on $\PP^n$ that is flat along $X$. Set $\calF_t=\calO_{X_t}\otimes_{\calO_{\PP^n}}\calF$. Suppose that either:
\begin{enumerate}[\quad\rm(1)]
\item the characteristic of $\FF$ is zero, and $X$ is lci, or
\item the characteristic of $\FF$ is positive, and $X$ is Cohen-Macaulay.
\end{enumerate}
Then, for each integer $k$ with $k<\dim X$, the inverse limit
\[
\ilim_t H^k(X_t,\calF_t)
\]
is a finite rank $\FF$-vector space. If $\calF=\calO_{\PP^n}(j)$ for $j<0$, this inverse limit vanishes.
\end{corollary}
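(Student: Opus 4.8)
The plan is to deduce the corollary directly from Theorem~\ref{theorem:formal:finite}, applied with $d = \dim X$ (we may assume $\dim X \ge 1$, since otherwise there is no integer $k < \dim X$ and the statement is vacuous). With this choice of $d$, the two assertions of Theorem~\ref{theorem:formal:finite} — that $\ilim_t H^k(X_t, \calF_t)$ has finite rank for $k < d = \dim X$, and that it vanishes when $\calF = \calO_{\PP^n}(j)$ with $j < 0$ — are exactly the conclusions we want. So the entire content is to verify the hypothesis of Theorem~\ref{theorem:formal:finite}, namely that $\calH^i_X(\PP^n, \calO_{\PP^n}) = 0$ for all $i > n - \dim X$. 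Setting $c = \codim_{\PP^n} X = n - \dim X$ (we take $X$ of pure codimension $c$, which is the case of interest), this is a local statement on stalks: it suffices to show, for each point $x \in X$, that the local cohomology module $H^i_{\calI_x}(\calO_{\PP^n, x})$ vanishes for $i > c$, where $\calI_x$ is the stalk at $x$ of the ideal sheaf of $X$, an ideal of height $c$ in the regular local ring $\calO_{\PP^n, x}$.

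In case~(1) I would argue as follows. Since $X$ is lci, $\calI_x$ is generated by a regular sequence $f_1, \dots, f_c$ of length $c$. The \v{C}ech complex on $f_1, \dots, f_c$ computes $H^\bullet_{\calI_x}(\calO_{\PP^n, x})$; it is supported in cohomological degrees $\le c$, and it has no cohomology in degrees $< c$ because a regular sequence has depth equal to its length. Hence $H^i_{\calI_x}(\calO_{\PP^n, x})$ is concentrated in degree $c$, and in particular vanishes for $i > c$, as needed. In fact this step is characteristic-free — as is Theorem~\ref{theorem:formal:finite} itself — so the characteristic-zero hypothesis in~(1) plays no role here; it is recorded only for parallelism with~(2), where, as I explain next, the restriction on the characteristic is genuinely essential.

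In case~(2), with $\FF$ of positive characteristic and $X$ Cohen--Macaulay, the same reduction leaves us to prove that $H^i_{\calI_x}(\calO_{\PP^n, x}) = 0$ for $i > c = \operatorname{height}(\calI_x)$ — that is, that the cohomological dimension of $\calI_x$ equals its height. This is the one genuinely nontrivial input, and I would invoke the positive-characteristic theorem of Peskine--Szpiro: if $A$ is a regular local ring of positive characteristic $p$ and $\frakp \subset A$ is an ideal with $A/\frakp$ Cohen--Macaulay, then $H^i_\frakp(A) = 0$ for $i > \operatorname{height}(\frakp)$. For the reader's convenience I would recall the mechanism: flatness of the Frobenius over the regular ring $A$ gives $\pd_A(A/\frakp^{[q]}) = \pd_A(A/\frakp) = \operatorname{height}(\frakp)$ for every $q = p^e$, by the Auslander--Buchsbaum formula, so $\Ext^i_A(A/\frakp^{[q]}, A) = 0$ for $i > \operatorname{height}(\frakp)$; since the Frobenius powers $\frakp^{[q]}$ are cofinal among the ordinary powers $\frakp^t$, we obtain $H^i_\frakp(A) = \dlim_q \Ext^i_A(A/\frakp^{[q]}, A) = 0$ in that range. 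Applied to $A = \calO_{\PP^n, x}$ and $\frakp = \calI_x$, this yields the desired vanishing of $\calH^i_X(\PP^n, \calO_{\PP^n})$ for $i > c$, and Theorem~\ref{theorem:formal:finite} then finishes the argument. I expect case~(2) to be the main obstacle: it rests entirely on the Peskine--Szpiro vanishing, a statement with no characteristic-zero counterpart — which is precisely why case~(1) must assume the stronger lci condition in place of mere Cohen--Macaulayness.
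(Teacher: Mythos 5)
Your proposal is correct and follows essentially the same route as the paper: reduce to the hypothesis of Theorem~\ref{theorem:formal:finite} with $d=\dim X$, verify it in case~(1) by the local regular-sequence (Koszul/\v{C}ech) argument, and in case~(2) by Peskine--Szpiro vanishing for perfect ideals in regular rings of positive characteristic, which the paper simply cites as \cite[Proposition~III.4.1]{PS} and you unwind into its Frobenius proof. Your side remarks (that case~(1) is characteristic-free, and the implicit equidimensionality) are accurate and consistent with the paper's own proof.
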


The finite rank assertion need not hold even for $X$ of characteristic zero with Gorenstein rational singularities; see Example~\ref{example:singular}.

\begin{proof}
Setting $d=\dim X$ in the preceding theorem, it is enough to show that the local cohomology sheaves $\calH^i_X(\PP^n, \calO_{\PP^n})$ vanish for $i>n-d$. In case (1), this holds since for each homogeneous prime $\frakp$ with $I\subseteq\frakp\subsetneq\frakm$, the ideal $I_\frakp$ of $R_\frakp$ is generated by a regular sequence; in case~(2), the ideal $I_\frakp$ is perfect, so the vanishing holds by \cite[Proposition~III.4.1]{PS}.
\end{proof}

\begin{remark}
In the case where $\FF$ is the field of complex numbers $\CC$, the integer $\mu$ in~\eqref{equation:bass} is the $\CC$-rank of the singular cohomology group
\[
\Hsing^{n+i}(\CC^{n+1}\setminus V(I)\,;\,\CC),
\]
where $V(I)$ is the affine cone over $X$, see \cite[Theorem~3.1]{LSW}. When $X$ is moreover smooth, the integer $\mu$ may also be interpreted in terms of the ranks of the algebraic de~Rham cohomology groups~$H^k_\dR(X)$, see~\cite[Theorem~1.2]{Switala}.
\end{remark}

We now work towards the proof of Theorem~\ref{theorem:constant}; we first treat the case of $X$ smooth, and then turn to the more general case:

\subsection*{Smooth projective varieties}

Let $X$ be a closed subscheme of $\PP^n$, defined by a sheaf of ideals $\calI$, and let $X_t \subset \PP^n$ be the thickening of $X$ defined by $\calI^t$. The \emph{normal sheaf} of $X$ is the sheaf $\calN=(\calI/\calI^2)^\vee$, where $(-)^\vee$ denotes~$\calHom_{\calO_X}(-,\calO_X)$; this is a vector bundle when the subscheme $X$ is lci.

Following \cite{Hartshorne:ampleVB}, a vector bundle $\calE$ on $X$ is \emph{ample} if $\calO_{\pi}(1)$ is ample on the projective space bundle $\PP(\calE)$, where
\[
\pi\colon\PP(\calE)\to X
\]
is the projection morphism, and $\PP(\calE)$ parametrizes invertible quotients of $\calE$, i.e., we follow Grothendieck's convention regarding projective bundles. By the functoriality of projective space bundles, the property of being ample for a vector bundle passes to quotient bundles, \cite[Proposition~III.1.7]{Hartshorne:ample}, as well as to restrictions of $\calE$ to closed subschemes of~$X$, \cite[Proposition~III.1.3]{Hartshorne:ample}. In the case where $\calE$ is a line bundle, the map~$\pi$ is an isomorphism with~$\calO_\pi(1)$ corresponding to $\calE$, so the definition is consistent with that of an ample line bundle. The following example records the main source of ample bundles in our context:

\begin{example}
Let $X$ be a smooth projective subvariety of $\PP^n$. Since the tangent bundle~$\calT_{\PP^n}$ of $\PP^n$ is ample, the exact sequence
\[
0 \to \calT_X \to \calT_{\PP^n}\otimes\calO_X \to \calN \to 0
\]
shows that the normal bundle $\calN$ is ample, \cite[Proposition~III.2.1]{Hartshorne:ample}.
\end{example}

Ample vector bundles satisfy the following analogue of the Serre vanishing theorem, as well as Serre's cohomological criterion for ampleness:

\begin{lemma}
\label{lemma:ample}
Let $X$ be a proper variety, and let $\calE$ be a vector bundle on $X$. Then $\calE$ is ample if and only if for each coherent sheaf $\calF$ on $X$, and each $i > 0$, one has
\[
H^i(X,\Sym^t(\calE)\otimes\calF)=0 \quad\text{for } t \gg 0.
\]
\end{lemma}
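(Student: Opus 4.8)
The plan is to reduce everything to a statement about the tautological line bundle on the projective bundle $\pi\colon\PP(\calE)\to X$. The computational heart is the standard pair of identities $\pi_*\calO_{\pi}(t)=\Sym^t(\calE)$ and $R^q\pi_*\calO_{\pi}(t)=0$ for $q>0$ and $t\ge0$, combined with the projection formula $R^q\pi_*(\pi^*\calF\otimes\calO_{\pi}(t))\cong\calF\otimes R^q\pi_*\calO_{\pi}(t)$, valid since $\calO_{\pi}(t)$ is a line bundle. Feeding these into the Leray spectral sequence for $\pi$, which then degenerates, yields for every coherent sheaf $\calF$ on $X$ and every $t\ge0$ a natural isomorphism
\[
H^i\big(\PP(\calE),\,\pi^*\calF\otimes\calO_{\pi}(t)\big)\ \cong\ H^i\big(X,\,\Sym^t(\calE)\otimes\calF\big).
\]
Everything below is built on this identification, together with the fact that $\PP(\calE)$ is proper since $X$ is.

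For the ``only if'' direction, assume $\calE$ is ample, i.e.\ $\calO_{\pi}(1)$ is an ample line bundle on $\PP(\calE)$. Serre's vanishing theorem applied to the coherent sheaf $\pi^*\calF$ gives $H^i(\PP(\calE),\pi^*\calF\otimes\calO_{\pi}(t))=0$ for all $i>0$ and $t\gg0$; by the displayed isomorphism this is precisely the asserted vanishing $H^i(X,\Sym^t(\calE)\otimes\calF)=0$ for $i>0$ and $t\gg0$.

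For the converse, assume the cohomological vanishing on $X$ and aim to verify that $\calO_{\pi}(1)$ is ample via the cohomological criterion for ampleness of a line bundle: it suffices to prove that for every coherent sheaf $\calG$ on $\PP(\calE)$ one has $H^i(\PP(\calE),\calG\otimes\calO_{\pi}(t))=0$ for all $i>0$ and $t\gg0$. I would argue this by descending induction on $i$, the case $i>\dim\PP(\calE)$ being vacuous. For the inductive step, invoke the relative form of Serre's theorem: setting $\calH:=\pi_*(\calG\otimes\calO_{\pi}(k))$, which is coherent on $X$, the natural adjunction map $\pi^*\calH\otimes\calO_{\pi}(-k)\to\calG$ is surjective once $k\gg0$, so one gets a short exact sequence
\[
0\to\mathcal{K}\to\pi^*\calH\otimes\calO_{\pi}(-k)\to\calG\to0
\]
with $\mathcal{K}$ coherent. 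Twisting by $\calO_{\pi}(t)$ and passing to the long exact cohomology sequence, $H^i(\calG\otimes\calO_{\pi}(t))$ sits between $H^i(\pi^*\calH\otimes\calO_{\pi}(t-k))$ and $H^{i+1}(\mathcal{K}\otimes\calO_{\pi}(t))$: the former equals $H^i(X,\Sym^{t-k}(\calE)\otimes\calH)$ for $t\ge k$, which vanishes for $t\gg0$ by hypothesis (here $i>0$ is used), and the latter vanishes for $t\gg0$ by the inductive hypothesis applied to $\mathcal{K}$. Hence $H^i(\calG\otimes\calO_{\pi}(t))=0$ for $t\gg0$, which closes the induction and proves that $\calO_{\pi}(1)$, hence $\calE$, is ample.

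The delicate part is the converse. It rests on two standard but non-formal inputs — the cohomological characterization of ample line bundles (vanishing of higher cohomology after twisting, for all coherent sheaves, implies ampleness) and the relative Serre generation theorem on $\PP(\calE)\to X$ — and one must run the descending induction taking care that the threshold ``$t\gg0$'' is allowed to change from sheaf to sheaf and from one induction step to the next. The only genuinely geometric point is that the induction is based not on a hypothesis about $X$, but on the automatic vanishing of cohomology in degrees exceeding $\dim\PP(\calE)$.
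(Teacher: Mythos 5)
Your proof is correct; the paper itself gives no argument here but simply cites \cite[Proposition~3.3]{Hartshorne:ampleVB}, and your argument is essentially Hartshorne's original one (transfer to $\calO_\pi(1)$ on $\PP(\calE)$ via $R\pi_*\calO_\pi(t)=\Sym^t(\calE)$, then Serre's cohomological criterion plus relative generation and descending induction on $i$). One tiny point of hygiene: the isomorphism $R^q\pi_*(\pi^*\calF\otimes\calO_\pi(t))\cong\calF\otimes R^q\pi_*\calO_\pi(t)$ for merely coherent $\calF$ is not the "line bundle" projection formula but follows because the \v{C}ech/derived pushforward of $\calO_\pi(t)$ is a perfect complex concentrated in degree $0$ with locally free $H^0$ for $t\ge0$; the conclusion you draw from it is correct.
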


\begin{proof}
This is \cite[Proposition~3.3]{Hartshorne:ampleVB}.
\end{proof}

In the case that $X$ is smooth, Theorem~\ref{theorem:constant} says the following:

\begin{theorem}[Hartshorne]
\label{theorem:smooth:constant}
Let $X$ be a smooth closed subvariety of $\PP^n$ over a field of characteristic zero. Let $\calF$ be a coherent sheaf on $\PP^n$ that is flat along $X$. Then, for each integer $k$ with~$k<\dim X$, the natural map
\[
H^k(X_{t+1},\calO_{X_{t+1}}\otimes_{\calO_{\PP^n}}\calF) \to H^k(X_t,\calO_{X_t}\otimes_{\calO_{\PP^n}}\calF)
\]
is an isomorphism for all $t\gg0$.
\end{theorem}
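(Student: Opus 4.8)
The plan is to analyze the system of thickenings via the conormal and normal bundles, using the comparison of $\calF_t$ with $\calF_{t+1}$ through the graded pieces $\calI^t/\calI^{t+1}$. Since $X$ is smooth, the conormal sheaf $\calI/\calI^2$ is a vector bundle, and there are canonical surjections $\Sym^t(\calI/\calI^2) \onto \calI^t/\calI^{t+1}$. For $X$ smooth in characteristic zero these are in fact isomorphisms, so $\calI^t/\calI^{t+1} \cong \Sym^t(\calN^\vee)$, where $\calN = (\calI/\calI^2)^\vee$ is the normal bundle. First I would set up, for each $t$, the short exact sequence of sheaves on $\PP^n$ (supported on $X_{t+1}$)
\[
0 \to \calI^t/\calI^{t+1} \to \calO_{X_{t+1}} \to \calO_{X_t} \to 0,
\]
and tensor with the locally free (near $X$) sheaf $\calF$; since $\calF$ is flat along $X$, this stays exact, giving
\[
0 \to \Sym^t(\calN^\vee)\otimes\calF_0 \to \calF_{t+1} \to \calF_t \to 0
\]
as sheaves on $X$ (using $\calF_0 = \calF\otimes\calO_X$), where I write $\calN^\vee = \calI/\calI^2$.

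Next I would pass to the long exact sequence in cohomology on $X$:
\[
\cdots \to H^k(X, \Sym^t(\calN^\vee)\otimes\calF_0) \to H^k(X_{t+1},\calF_{t+1}) \to H^k(X_t,\calF_t) \to H^{k+1}(X, \Sym^t(\calN^\vee)\otimes\calF_0) \to \cdots.
\]
So the transition map $H^k(X_{t+1},\calF_{t+1}) \to H^k(X_t,\calF_t)$ is an isomorphism as soon as both flanking groups $H^k(X, \Sym^t(\calN^\vee)\otimes\calF_0)$ and $H^{k+1}(X, \Sym^t(\calN^\vee)\otimes\calF_0)$ vanish. Now the key input: since $X$ is smooth, the normal bundle $\calN$ is ample (this is the Example recorded above, coming from the ampleness of $\calT_{\PP^n}$ and the quotient $\calT_{\PP^n}\otimes\calO_X \onto \calN$). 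By Lemma~\ref{lemma:ample} (Hartshorne's cohomological criterion), ampleness of $\calN$ gives, for every coherent sheaf $\calG$ on $X$ and every $i>0$, the vanishing $H^i(X,\Sym^t(\calN)\otimes\calG)=0$ for $t\gg0$. However, what appears in our exact sequence is $\Sym^t(\calN^\vee)$, not $\Sym^t(\calN)$, so this must be dualized: I would apply Serre duality on the smooth proper variety $X$ of dimension $d := \dim X$, which turns $H^k(X,\Sym^t(\calN^\vee)\otimes\calF_0)$ into $H^{d-k}(X, \Sym^t(\calN)\otimes\calF_0^\vee\otimes\omega_X)^\vee$. Taking $\calG = \calF_0^\vee\otimes\omega_X$, the ampleness of $\calN$ plus Lemma~\ref{lemma:ample} yields $H^{d-k}(X,\Sym^t(\calN)\otimes\calG)=0$ for $t\gg0$ provided $d-k>0$, i.e. $k<d=\dim X$. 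Applying this to both $k$ and $k+1$ (for $k < \dim X$ we have $k+1 \le \dim X$, and when $k+1=\dim X$ the group $H^{k+1}$ we need is $H^{\dim X}$ of a twist of $\Sym^t(\calN^\vee)$, which is Serre-dual to $H^0$ — this marginal case needs a moment's attention), both flanking terms vanish for $t\gg0$, and the transition map is an isomorphism.

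The main obstacle — really the only subtle point — is handling the endpoint $k+1 = \dim X$: Serre duality sends $H^{\dim X}(X, \Sym^t(\calN^\vee)\otimes\calF_0)$ to $H^0(X, \Sym^t(\calN)\otimes\calF_0^\vee\otimes\omega_X)^\vee$, and $H^0$ of an ample bundle twisted by a fixed sheaf does \emph{not} vanish for $t\gg0$ — it grows. So the crude two-term vanishing argument only directly gives the isomorphism statement for $k < \dim X - 1$. To reach $k < \dim X$, I would instead argue as follows: for $k < \dim X$, the term $H^k(X, \Sym^t(\calN^\vee)\otimes\calF_0)$ vanishes for $t \gg 0$ (Serre-dual to $H^{\dim X - k}$ with $\dim X - k \ge 1$), so from the long exact sequence the transition map $H^k(X_{t+1}, \calF_{t+1}) \to H^k(X_t, \calF_t)$ is injective for $t \gg 0$ and its cokernel injects into $H^{k+1}(X, \Sym^t(\calN^\vee)\otimes\calF_0)$; to get surjectivity one uses that the relevant piece of $H^{k+1}(X, \Sym^t(\calN^\vee)\otimes\calF_0)$ hit by the boundary map is the image of $H^k(X_t, \calF_t) \to H^{k+1}(X, \Sym^t(\calN^\vee)\otimes\calF_0)$, and by the finite-rank statement of Corollary~\ref{corollary:finite:rank} (or Theorem~\ref{theorem:formal:finite}) the inverse system $\{H^k(X_t,\calF_t)\}_t$ has inverse limit of finite rank, so a system of injections with finite-rank target must eventually stabilize to isomorphisms by a Mittag-Leffler / descending chain argument. (Alternatively, one observes that in the smooth case Hartshorne's original argument applies verbatim; I would cite \cite[Theorem~8.1]{Hartshorne:ampleVB} for this endpoint refinement if a self-contained treatment becomes lengthy.) I expect this endpoint bookkeeping to be the only place where care is needed; everything else is a formal consequence of the ampleness of $\calN$ and Serre duality.
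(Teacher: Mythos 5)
Your proposal is correct and follows essentially the same route as the paper: the paper likewise reduces to showing the transition maps are eventually injective (finite-dimensionality of each $H^k(X_t,\calF_t)$ then forces the eventually-injective maps to be isomorphisms), and establishes that injectivity by killing $H^k(X,\Sym^t(\calI/\calI^2)\otimes\calF)$ via Serre duality, the characteristic-zero identification $(\Sym^t(\calI/\calI^2))^\vee\simeq\Sym^t(\calN)$, and the ampleness of $\calN$. Your worry about the $H^{k+1}$ term at the endpoint is resolved exactly as in the paper --- one never needs that flanking term to vanish.
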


This appears in Hartshorne's proof of \cite[Theorem~8.1~(b)]{Hartshorne:ampleVB}, though it is not explicit in the statement of that theorem. We provide an argument for the convenience of the reader and then, in the following subsection, work through the modifications necessary for the case of lci subvarieties.

\begin{proof}
The cohomology groups have finite rank, so it suffices to prove that the maps are injective for $t\gg0$. In view of the cohomology exact sequence induced by
\[
0 \to \Sym^t(\calI/\calI^2)\otimes\calF \to \calO_{X_{t+1}}\otimes\calF \to \calO_{X_t}\otimes\calF \to 0,
\]
it suffices to prove that for each $k<\dim X$, one has
\[
H^k(X,\Sym^t(\calI/\calI^2)\otimes\calF)=0\quad\text{for }t\gg0.
\]
By Serre duality, this is equivalent to the vanishing of
\[
H^i(X,(\Sym^t(\calI/\calI^2))^\vee\otimes\calF^\vee\otimes K_X)\quad\text{ for }i>0\text{ and }t\gg0.
\]
Using $\Gamma^t$ for $t$-th divided powers, one has
\begin{equation}
\label{equation:symmetric}
(\Sym^t(\calI/\calI^2))^\vee = \Gamma^t(\calN) = \Sym^t(\calN),
\end{equation}
where the second equality in the above display uses the characteristic zero assumption, see for example~\cite[Proposition~A.2.7]{Eisenbud}. Thus, it suffices to prove that
\[
H^i(X,\Sym^t(\calN)\otimes\calF^\vee\otimes K_X)=0\quad\text{for }i>0 \text{ and }t\gg0,
\]
but this is immediate from Lemma~\ref{lemma:ample}, as $\calN$ is ample.
\end{proof}

\subsection*{Partially ample vector bundles, and local complete intersection singularities}

We now extend the previous results on smooth varieties to lci varieties. To carry this through, we need a weakening of the notion of an ample vector bundle that provides vanishing beyond a fixed cohomological degree, instead of vanishing in all positive cohomological degrees as in the smooth case. For line bundles, such a notion was studied in depth recently in \cite{TotaroAmple}. In the general case, the following definition captures the features relevant to our application, and was first studied in \cite{Arapura:AJM}:

\begin{definition}
Let $X$ be a projective scheme, and let $\calE$ be a vector bundle on $X$. Fix an integer $s$. We say that \emph{$\calE$ is $s$-ample} if for each coherent sheaf $\calF$ on $X$, and each integer $i$ with $i>s$, the group $H^i(X, \Sym^t(\calE) \otimes \calF)$ vanishes for $t \gg 0$.
\end{definition}

Thus, every bundle is $\dim X$-ample; by Lemma~\ref{lemma:ample}, a bundle is $0$-ample if and only if it is ample. It is also immediate that the restriction of an $s$-ample vector bundle to a closed subvariety remains $s$-ample. In the interest of keeping the paper self-contained, we spell out a few more elementary properties, even though they may be extracted from \cite{Arapura:AJM}. First, we often use the following criterion to check $s$-ampleness:

\begin{lemma}
\label{lemma:ample:criterion}
Let $X$ be a projective scheme, and let $\calE$ be a vector bundle on $X$. Fix an integer $s$, an ample line bundle $\calO_X(1)$ on $X$, and a line bundle $\calL$. Then $\calE$ is $s$-ample if and only if for each integer $j$, and each integer $i$ with $i > s$, one has
\[
H^i(X, \Sym^t(\calE)(j) \otimes \calL) = 0\quad\text{for }t \gg 0.
\]
\end{lemma}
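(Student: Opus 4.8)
The plan is to deduce the equivalence from the definition of $s$-ampleness by the standard device of reducing an arbitrary coherent sheaf to (twists of) a fixed ample line bundle, together with its dual. The forward implication ($\Rightarrow$) is immediate: if $\calE$ is $s$-ample, then applying the defining vanishing to the specific coherent sheaves $\calF = \calO_X(j)\otimes\calL$ yields exactly $H^i(X,\Sym^t(\calE)(j)\otimes\calL)=0$ for $i>s$ and $t\gg0$. The content is entirely in the converse.

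For the converse, assume $H^i(X,\Sym^t(\calE)(j)\otimes\calL)=0$ for all $j$, all $i>s$, and $t\gg0$ (the bound on $t$ depending on $j$); we must show the same vanishing with $\calO_X(j)\otimes\calL$ replaced by an arbitrary coherent sheaf $\calF$. First I would observe that it suffices to handle the case $\calL=\calO_X$: since $\calL$ is a line bundle, tensoring the hypothesis by $\calL^{-1}$ shows that the hypothesis for the pair $(\calO_X(1),\calL)$ is equivalent to the hypothesis for $(\calO_X(1),\calO_X)$, and likewise the conclusion for general $\calF$ is unchanged under replacing $\calF$ by $\calF\otimes\calL^{\pm1}$. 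So assume $\calL=\calO_X$. Now, given a coherent sheaf $\calF$ on the projective scheme $X$, choose a (possibly non-canonical) finite resolution
\[
0 \to \calG^{-m} \to \cdots \to \calG^{-1} \to \calG^0 \to \calF \to 0
\]
in which each $\calG^{-p}$ is a finite direct sum of line bundles $\calO_X(j_{p,q})$ with $j_{p,q}$ sufficiently negative; such a resolution exists because $\calO_X(1)$ is ample, so twists $\calO_X(j)$ with $j\ll0$ generate the category of coherent sheaves under finite direct sums and surjections. Tensoring the resolution with $\Sym^t(\calE)$, which is locally free, keeps it exact, and breaking it into short exact sequences gives a collection of long exact cohomology sequences. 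By hypothesis (with the needed twists), for $t\gg0$ one has $H^i(X,\Sym^t(\calE)\otimes\calG^{-p})=0$ for every $i>s$ and every $p$; a routine diagram chase up the resolution (equivalently, the hypercohomology spectral sequence of the complex $\Sym^t(\calE)\otimes\calG^{\bullet}\simeq\Sym^t(\calE)\otimes\calF$) then forces $H^i(X,\Sym^t(\calE)\otimes\calF)=0$ for $i>s$. Here one uses that the resolution has finite length, so only finitely many twists $j_{p,q}$ appear, and one may take the maximum over the finitely many thresholds $t\gg0$ coming from each.

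The main obstacle — really the only point requiring care — is the interchange of quantifiers: the hypothesis is "for each $j$, there is $t_0(j)$ such that vanishing holds for $t\ge t_0(j)$", and one must ensure that in assembling the conclusion for a fixed $\calF$ one only invokes finitely many values of $j$, so that $\max_{p,q} t_0(j_{p,q})$ is a legitimate single threshold. This is guaranteed by the finiteness of the resolution, which in turn rests on $X$ being projective (Noetherian, with $\calO_X(1)$ ample). No positivity of $\calE$ itself is used beyond the hypothesis; the argument is purely formal once the reduction to twists of $\calO_X$ is in place.
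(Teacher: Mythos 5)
There is a genuine gap: the finite resolution you invoke does not exist in general. On a projective scheme $X$ that is not regular (and the lemma is stated, and later applied, for such $X$ --- e.g., to $\PP(\calE)$ over a singular lci variety), a coherent sheaf need not admit \emph{any} finite locally free resolution, let alone one by finite direct sums of twists $\calO_X(j)$: a skyscraper sheaf at a non-regular point already has infinite projective dimension over the local ring. Ampleness of $\calO_X(1)$ only guarantees that at each stage one can surject a finite direct sum of negative twists onto the current kernel; it gives no reason for the process to terminate. (Even for smooth $X$ the last syzygy is merely locally free, not a sum of line bundles, so the claim is delicate there too.) Your quantifier discussion at the end is fine, but it presupposes the finite resolution whose existence is the real issue.

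The repair is to stop the resolution after finitely many steps and absorb the tail using the fact that $X$ has cohomological dimension $\dim X$. Concretely, this is what the paper does, packaged as a descending induction on $i$: the base case $i>\dim X$ is Grothendieck vanishing, and the inductive step uses a single surjection $\bigoplus_r \calO_X(j_r)\otimes\calL \onto \calF$ with kernel $\calG$, so that in the exact sequence
\[
H^i\bigl(X, \textstyle\bigoplus_r \Sym^t(\calE)(j_r)\otimes\calL\bigr) \to H^i(X,\Sym^t(\calE)\otimes\calF) \to H^{i+1}(X,\Sym^t(\calE)\otimes\calG)
\]
the outer terms vanish for $t\gg0$ by the hypothesis and by the induction hypothesis applied to the arbitrary coherent sheaf $\calG$ in degree $i+1$. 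Equivalently, your spectral-sequence argument works if you truncate the resolution at length $\dim X$ and note that the leftover kernel only contributes to $H^i$ for $i>\dim X$. One further small point: your preliminary ``reduction to $\calL=\calO_X$'' is circular as written --- the vanishing of $H^i(X,\Sym^t(\calE)(j)\otimes\calL)$ is a statement about a specific sheaf and cannot be ``tensored by $\calL^{-1}$'' at the level of cohomology. It is also unnecessary: simply use surjections of the form $\bigoplus_r \calO_X(j_r)\otimes\calL \onto \calF$, which exist because $\calF\otimes\calL^{-1}$ is coherent.
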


\begin{proof}
The ``only if'' direction is clear. For the converse, let $\calF$ be a coherent sheaf. We will show that $H^i(X, \Sym^t(\calE) \otimes \calF) = 0$ for $i> s$ and $t \gg 0$ by descending induction on~$i$. If~$i > \dim X$, then the claim is clearly true as $X$ has cohomological dimension $\dim X$. For smaller $i$, pick a surjection
\[
\bigoplus_r \calO_{X}(j_r) \otimes \calL \onto \calF
\]
and let $\calG$ be its kernel. Tensoring with $\Sym^t(\calE)$ and taking the associated long exact cohomology sequence yields an exact sequence
\[
H^i(X, \oplus \Sym^t(\calE)(j_r) \otimes \calL) \to H^i(X, \Sym^t(\calE)\otimes \calF) \to H^{i+1}(X, \Sym^t(\calE) \otimes \calG).
\]
By the (descending) induction hypothesis, the third term vanishes for $t \gg 0$. The assumption on $\calE$ shows that the first term vanishes for $t \gg 0$ as well; thus, the middle term vanishes for $t \gg 0$, as desired.
\end{proof}

Next, we relate $s$-ample vector bundles to the corresponding notion for line bundles from \cite{TotaroAmple}, justifying the name:

\begin{lemma}
Let $X$ be a projective scheme, $s$ a nonnegative integer, $\calE$ a vector bundle, and $\pi \colon \PP(\calE) \to X$ the projection map. Then $\calE$ is an $s$-ample vector bundle if and only if~$\calO_{\pi}(1)$ is an $s$-ample line bundle on $\PP(\calE)$.
\end{lemma}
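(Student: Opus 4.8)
The plan is to reduce the statement about the vector bundle $\calE$ to the already-understood situation for line bundles by exploiting the relationship between $\Sym^t(\calE)$ on $X$ and the pushforward of powers of $\calO_\pi(1)$ on $\PP(\calE)$. Concretely, I would use the two basic facts about the projection $\pi\colon\PP(\calE)\to X$: first, $\pi_*\calO_\pi(t)=\Sym^t(\calE)$ and $R^i\pi_*\calO_\pi(t)=0$ for $i>0$ and $t\ge0$ (Grothendieck's convention, as fixed in the excerpt); and second, $\pi$ has relative dimension $r-1$ where $r=\rank\calE$, so $R^i\pi_*$ vanishes for $i>r-1$ on any coherent sheaf. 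The Leray spectral sequence $H^a(X,R^b\pi_*\calG)\Rightarrow H^{a+b}(\PP(\calE),\calG)$ then becomes the main bridge.

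For the ``only if'' direction, suppose $\calE$ is $s$-ample and let $\calG$ be a coherent sheaf on $\PP(\calE)$; I want $H^i(\PP(\calE),\calG\otimes\calO_\pi(t))=0$ for $i>s$ and $t\gg0$. Using Lemma~\ref{lemma:ample:criterion} (applied on $\PP(\calE)$ with an ample line bundle of the form $\pi^*\calO_X(1)\otimes\calO_\pi(1)$) it suffices to handle $\calG=\pi^*\calO_X(j)\otimes\calO_\pi(m)$ for fixed $j,m$. For such $\calG$, the projection formula gives $R^b\pi_*(\calG\otimes\calO_\pi(t))=\Sym^{m+t}(\calE)(j)$ for $b=0$ and $0$ for $b>0$ once $t\gg0$, so the Leray spectral sequence degenerates to $H^i(\PP(\calE),\calG\otimes\calO_\pi(t))=H^i(X,\Sym^{m+t}(\calE)(j))$, which vanishes for $i>s$ and $t\gg0$ by $s$-ampleness of $\calE$ (invoking Lemma~\ref{lemma:ample:criterion} again, now on $X$). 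For the ``if'' direction, suppose $\calO_\pi(1)$ is $s$-ample and let $\calF$ be a coherent sheaf on $X$; I want $H^i(X,\Sym^t(\calE)\otimes\calF)=0$ for $i>s$ and $t\gg0$. Since $R^b\pi_*(\pi^*\calF\otimes\calO_\pi(t))=\Sym^t(\calE)\otimes\calF$ for $b=0$ and vanishes for $b>0$ when $t\ge0$, the Leray spectral sequence collapses and gives $H^i(X,\Sym^t(\calE)\otimes\calF)=H^i(\PP(\calE),\pi^*\calF\otimes\calO_\pi(t))$, which vanishes for $i>s$ and $t\gg0$ by $s$-ampleness of $\calO_\pi(1)$ applied to the coherent sheaf $\pi^*\calF$.

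The one point requiring a little care — and the closest thing to an obstacle — is the reduction in the ``only if'' direction: the $s$-ampleness hypothesis on $\calE$ is phrased as a statement about $\Sym^t(\calE)\otimes\calF$ for arbitrary $\calF$ on $X$, but after twisting by $\calO_\pi(m)$ one is really looking at $\Sym^{m+t}(\calE)$, i.e.\ symmetric powers shifted by a fixed amount, tensored with a twist of a line bundle; this is exactly the content of Lemma~\ref{lemma:ample:criterion}, so the hypothesis of that lemma (an ample line bundle and an auxiliary line bundle $\calL$ on the relevant space) must be matched up correctly — on $\PP(\calE)$ one needs $\pi^*\calO_X(1)\otimes\calO_\pi(1)$ to be ample, which holds because $\calO_X(1)$ is ample on $X$ and $\calO_\pi(1)$ is $\pi$-ample. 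Everything else is a formal consequence of the vanishing $R^{>0}\pi_*\calO_\pi(t)=0$ and the projection formula. I would also note in passing that the hypothesis $s\ge0$ in the statement is only needed to make contact with the line-bundle notion of \cite{TotaroAmple}, since $\calO_\pi(1)$ is a genuine line bundle and the case $s<0$ would force $\calE$ to be trivial-ish; the argument above does not actually use $s\ge0$.
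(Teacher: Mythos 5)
Your proof is correct and follows essentially the same route as the paper: both directions rest on the vanishing of the higher direct images of $\calO_\pi(t)$ together with the projection formula, with Lemma~\ref{lemma:ample:criterion} used to reduce the harder direction to twists by a fixed ample line bundle on $\PP(\calE)$. The only small imprecision is that $\pi^*\calO_X(1)\otimes\calO_\pi(1)$ need not itself be ample; as in the paper, one should take $\pi^*L\otimes\calO_\pi(1)$ for a sufficiently positive ample $L$ on $X$, which changes nothing else in the argument.
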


\begin{proof}
Assume first that $\calO_{\pi}(1)$ is $s$-ample on $\PP(\calE)$. Let $\calF$ be a coherent sheaf on $X$. Then we must show that $H^i(X, \Sym^t(\calE) \otimes \calF) = 0$ for $i > s$ and $t \gg 0$. But, if $t > 0$, then~$\Sym^t(\calE) = R^0 \pi_* \calO_{\pi}(t)$ and $R^i \pi_* \calO_{\pi}(t) = 0$ for $i > 0$. Thus, by the projection formula, we are reduced to showing that $H^i(\PP(\calE), \pi^*\calF(t)) = 0$ for $i > s$ and $t \gg 0$; this follows from the~$s$-ampleness of $\calO_\pi(1)$.

Conversely, assume that $\calE$ is $s$-ample. Fix an ample line bundle $\calL$ on $X$ such that
\[
\calM := \pi^*\calL \otimes \calO_\pi(1)
\]
is ample on $\PP(\calE)$. To show that $\calO_\pi(1)$ is $s$-ample on $\PP(\calE)$, by Lemma~\ref{lemma:ample:criterion}, it suffices to show that for fixed $j$, we have
\[
H^i(\PP(\calE), \calM^{\otimes j} \otimes \calO_\pi(t)) = 0
\]
for $i > s$ and $t \gg 0$. Now observe that
\[
\calM^{\otimes j} \otimes \calO_\pi(t) = \pi^* \calL^{\otimes j} \otimes \calO_\pi(t-j).
\]
Thus, by the projection formula, for $t > j$, we can write
\[
H^i(\PP(\calE), \calM^{\otimes j} \otimes \calO_\pi(t)) = H^i(X, \calL^{\otimes j} \otimes \Sym^{t-j}(\calE))
\]
which vanishes for $i > s$ and $t \gg j$ by the $s$-ampleness of $\calE$. This completes the proof.
\end{proof}

Using this, we can show that the property of $s$-ampleness passes to quotients:

\begin{lemma}
\label{lemma:quotient:partially:ample}
Let $X$ be a projective scheme, $s\ge0$ an integer, and $\calE$ an $s$-ample bundle on~$X$. Fix a quotient bundle $\calE \onto \calG$. Then $\calG$ is $s$-ample.
\end{lemma}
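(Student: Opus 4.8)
The plan is to deduce this from the preceding lemma, which identifies $s$-ampleness of a vector bundle $\calE$ with $s$-ampleness of the line bundle $\calO_\pi(1)$ on the projective bundle $\pi\colon\PP(\calE)\to X$; this is cleaner than arguing directly. (A naive approach via the filtration of $\Sym^t(\calE)$ with graded pieces $\Sym^p(\ker(\calE\to\calG))\otimes\Sym^{t-p}(\calG)$ does express $\Sym^t(\calG)$ as a quotient of $\Sym^t(\calE)$, but then controlling $H^{i+1}$ of the kernel seems to lead back to the very vanishing one is trying to prove, so I would avoid it.)

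Here are the steps I would carry out, in order. First, recall the standard fact that, in Grothendieck's convention, the surjection $\calE\onto\calG$ induces a closed immersion $i\colon\PP(\calG)\into\PP(\calE)$ over $X$ — so that $\pi\circ i=\pi'$, where $\pi'\colon\PP(\calG)\to X$ is the projection — together with the identification $i^*\calO_\pi(1)=\calO_{\pi'}(1)$. Second, since $\calE$ is $s$-ample, the preceding lemma gives that $\calO_\pi(1)$ is an $s$-ample line bundle on $\PP(\calE)$. Third, restrict along $i$: the observation recorded earlier in the paper that restriction of an $s$-ample vector bundle to a closed subvariety stays $s$-ample has a one-line proof (push a coherent sheaf forward along the affine closed immersion and apply the projection formula) that applies verbatim to an arbitrary closed subscheme, in particular to the line bundle $\calO_\pi(1)$ and the closed subscheme $\PP(\calG)\subseteq\PP(\calE)$; hence $\calO_{\pi'}(1)=i^*\calO_\pi(1)$ is $s$-ample on $\PP(\calG)$. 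Fourth, apply the preceding lemma in the reverse direction to $\calG$: since $\calO_{\pi'}(1)$ is $s$-ample on $\PP(\calG)$, the bundle $\calG$ is $s$-ample, which is what we want.

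I do not expect a genuine obstacle; the argument is formal once the dictionary between $s$-ample bundles and $s$-ample line bundles on the associated projective bundle is in place. The only points needing a moment's care are the two standard compatibilities in the first step — the existence of the closed immersion $\PP(\calG)\into\PP(\calE)$ and the identification $i^*\calO_\pi(1)=\calO_{\pi'}(1)$ — and the remark that the earlier restriction property holds for closed subschemes and not merely closed subvarieties; both are routine.
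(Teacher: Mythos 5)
Your proposal is correct and follows essentially the same route as the paper: pass to the projective bundles via the preceding lemma, use the closed immersion $\PP(\calG)\into\PP(\calE)$ with $i^*\calO_{\pi_\calE}(1)=\calO_{\pi_\calG}(1)$, and invoke the fact that restriction of an $s$-ample line bundle to a closed subscheme remains $s$-ample. The paper's proof is exactly this reduction, so there is nothing to add.
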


\begin{proof}
Write $\pi_{\calE} \colon \PP(\calE) \to X$ and $\pi_{\calG} \colon \PP(\calG) \to X$ for the projection morphisms. The quotient map $\calE \onto \calG$ induces a map $i\colon\PP(\calG) \to \PP(\calE)$ that is a closed immersion satisfying~$i^* \calO_{\pi_{\calE}}(1) = \calO_{\pi_{\calG}}(1)$. Thus, the claim reduces to the trivial observation that the restriction of $s$-ample line bundles to closed subschemes remains $s$-ample.
\end{proof}

In the sequel, it is useful to use the language of derived categories and derived tensor products to streamline proofs. Thus, from now on, for a scheme $X$, write $D(X)$ for the quasi-coherent derived category of $X$. We first record the following derived categorical observation concerning $s$-ample bundles:

\begin{lemma}
Let $X$ be a projective scheme, $s\ge0$ an integer, and $\calE$ an $s$-ample bundle on~$X$. Fix $K \in D^{\le 0}(X)$. Then
\[
H^i(X, \Sym^t(\calE) \otimes K) = 0
\]
for $i > s$ and $t \gg 0$.
\end{lemma}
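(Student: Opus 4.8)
The plan is to reduce the statement for a general complex $K \in D^{\le 0}(X)$ to the case of a single coherent sheaf, for which $s$-ampleness applies directly, by filtering $K$ by its cohomology sheaves and inducting. First I would observe that since $X$ is projective (hence has finite cohomological dimension $\le \dim X$), the cohomology $H^i(X, \Sym^t(\calE) \otimes K)$ only depends on a bounded truncation of $K$: for fixed $i > s$, only the cohomology sheaves $\calH^m(K)$ with $i - \dim X \le m \le 0$ can contribute. Thus I may replace $K$ by $\tau^{\ge i - \dim X} K$ and assume $K \in D^b(X)$ with cohomology concentrated in finitely many degrees, all $\le 0$.

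Next, I would argue by induction on the number of nonzero cohomology sheaves of this bounded complex $K$. If $K$ has a single nonzero cohomology sheaf, then $K \simeq \calH^m(K)[-m]$ for some $m \le 0$, so $H^i(X, \Sym^t(\calE) \otimes K) = H^{i+m}(X, \Sym^t(\calE) \otimes \calH^m(K))$; since $m \le 0$ we have $i + m \le i$, but we need $i + m > s$ — here we use $m$ could be negative. Actually the cleaner formulation: $H^i(X, \Sym^t(\calE) \otimes \calG[-m]) = H^{i-m}(X, \Sym^t(\calE) \otimes \calG)$ with $-m \ge 0$, so $i - m \ge i > s$, and this vanishes for $t \gg 0$ by the definition of $s$-ampleness applied to the coherent sheaf $\calG = \calH^m(K)$. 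For the inductive step, pick the top nonzero cohomology degree, say $\calH^0(K) = \calG$ (shifting so the top is in degree $0$ after the truncation above — or more carefully, let $m_0$ be the largest integer with $\calH^{m_0}(K) \ne 0$), and use the distinguished triangle
\[
\tau^{< m_0} K \to K \to \calH^{m_0}(K)[-m_0] \xrightarrow{+1}
\]
in $D^{\le 0}(X)$. Tensoring with $\Sym^t(\calE)$ (which is flat, being locally free) preserves this triangle, and the associated long exact cohomology sequence sandwiches $H^i(X, \Sym^t(\calE) \otimes K)$ between $H^i(X, \Sym^t(\calE) \otimes \tau^{<m_0}K)$ and $H^i(X, \Sym^t(\calE) \otimes \calH^{m_0}(K)[-m_0])$. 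Both outer terms vanish for $i > s$ and $t \gg 0$ — the first by the induction hypothesis (it has fewer nonzero cohomology sheaves, and still lies in $D^{\le 0}$), the second by the base case — so the middle term does too.

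The only subtlety — and the step I'd expect to need the most care — is handling the interaction between "for $t \gg 0$" quantifiers and the finitely many pieces: since after truncation $K$ has only finitely many cohomology sheaves, and each vanishing "$H^i(X, \Sym^t(\calE)\otimes \calG) = 0$ for $t \gg 0$" involves a bound $t_0$ depending on $\calG$ and $i$ (but $i$ ranges over the finite set $s < i \le \dim X$), one simply takes the maximum of finitely many such bounds. This makes the argument go through uniformly. I would present this compactly, noting that $\Sym^t(\calE)$ is locally free so $\otimes$ and $\otimes^{\mathrm{L}}$ agree, and that the truncation reduction is the standard "bounded cohomological dimension" device.
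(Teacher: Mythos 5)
Your proposal is correct and follows essentially the same route as the paper: truncate $K$ using the finite cohomological dimension of $X$ to reduce to a bounded complex in $D^{\le 0}(X)$, then induct on the number of nonzero cohomology sheaves, applying the definition of $s$-ampleness to each $\calH^m(K)$ (noting that the shift only helps since $m\le 0$). The quantifier bookkeeping you flag — taking the maximum of finitely many thresholds $t_0$ — is the right resolution and is left implicit in the paper.
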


\begin{proof}
Consider the triangle
\[
\tau^{\le s - \dim X} K \to K \to \tau^{> s - \dim X}(K).
\]
Then $M := \tau^{\le s - \dim X} K \otimes \Sym^t(\calE) \in D^{\le s - \dim X}(X)$, so $R\Gamma(X, M) \in D^{\le s}(\mathrm{pt})$ as $X$ has cohomological dimension $\dim X$, and thus $H^i(X, M) = 0$ for $i > s$. Thus, we may assume that~$K \simeq \tau^{> s - \dim X} K$ is bounded, and in $D^{\le 0}(X)$. For such $K$, the claim follows by induction on the number of nonzero cohomology sheaves and the definition of $s$-ampleness.
\end{proof}

To complete the proof of Theorem~\ref{theorem:constant}, we need a criterion for an extension of complexes to satisfy the cohomological criterion underlying $s$-ampleness. For this, and the sequel, it is convenient to use the theory of symmetric powers (as well as exterior and divided powers) for objects in $D(X)$. Such a theory was developed by Dold and Puppe via simplicial resolutions and the Dold-Kan correspondence as explained in, say, \cite[Chapter~1,~\S 4]{IllusieCC1}; for our applications, which concern characteristic zero schemes, a more elementary construction, as outlined in the next paragraph, suffices.

\begin{remark}
For most of the results in this paper, we work in characteristic zero. In this case, symmetric powers can be defined directly. Recall that for any commutative ring~$R$, there is a standard $\otimes$-structure defined on the category of complexes of $R$-modules as follows. If $K$ and $L$ are complexes, then $(K\otimes_RL)$ is the complex with
\[
(K\otimes_RL)^n=\bigoplus_{i+j=n}(K^i\otimes_RL^j),
\]
and the formula $d^n(x\otimes y)=d^i_K(x)\otimes y+(-1)^ix\otimes d^j_L(y)$ for $x\in K^i$ and $y\in L^j$ defines the differential $d^n\colon(K\otimes_RL)^n \to (K\otimes_RL)^{n+1}$. This $\otimes$-structure is symmetric, i.e., there is a canonical isomorphism
\[
\sigma_{K,L}\colon K\otimes_R L\to L\otimes_R K
\]
given by $x \otimes y \mapsto (-1)^{i \cdot j} y \otimes x$ for $x \in K^i$ and $y \in L^j$, and this isomorphism satisfies the identities necessary to induce a symmetric monoidal structure. In particular, for a complex~$K$ and integer $n \ge 0$, the $n$-fold tensor product $K^{\otimes_R n}$ acquires an $S_n$-action. Now if $R$ is a ring containing the rationals, then the (non-additive) functor $K \mapsto (K^{\otimes_R n})/S_n$ on chain complexes of free $R$-modules preserves quasi-isomorphisms (because if $M$ is an~$R$-module with an $S_n$-action, then the natural map $M\to M/S_n$ admits a splitting, namely the image of the map $M\to M$ that sends $m\in M$ to
\[
\frac{1}{n!}\sum_{\tau\in S_n}\tau(m)\in M
\]
is mapped isomorphically to $M/S_n$ via the natural surjection $M\to M/S_n$), and thus passes to the derived category to induce the sought for functor $\Sym^n\colon D(R) \to D(R)$. Likewise, there is a notion of exterior powers $\wedge^n\colon D(R) \to D(R)$, and one can directly show that~$\Sym^n(K[1]) \simeq \wedge^n K[n]$ for $K \in D(R)$.
\end{remark}

Our promised criterion is the following:

\begin{lemma}
\label{lemma:ample:triangle}
Let $X$ be a proper scheme, and let $K_1 \to K_2 \to K_3$ be an exact triangle in~$D(X)$. Fix an integer $s \ge 0$. Assume the following:
\begin{enumerate}[\quad\rm(1)]
\item $K_1$ is an $s$-ample vector bundle, viewed as a chain complex placed in degree $0$.
\item $K_3 \in D^{\le 0}(X)$, and the support of $\calH^0(K_3)$ has dimension at most $s$.
\end{enumerate}
If $\calF$ is a vector bundle on $X$, then, for each $i>s$, one has
\[
H^i(X,\Sym^t(K_2)\otimes\calF)=0 \quad\text{for } t \gg 0.
\]
\end{lemma}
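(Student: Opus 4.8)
The plan is to take symmetric powers of the triangle $K_1\to K_2\to K_3$ and estimate the cohomology of each graded piece. Working with the derived symmetric powers of the preceding remark (so, in characteristic zero), the triangle induces on $\Sym^t$ a finite filtration --- the derived avatar of the classical filtration on $\Sym^t$ of a short exact sequence of flat modules, cf.\ \cite[Chapter~1]{IllusieCC1} --- whose associated graded is $\bigoplus_{p=0}^{t}\Sym^p(K_1)\otimes\Sym^{t-p}(K_3)$. Since $\calF$ is locally free, $-\otimes\calF$ is exact and preserves this filtration, so by d\'evissage along it, it suffices to prove that
\[
H^i\!\left(X,\ \Sym^p(K_1)\otimes\Sym^{t-p}(K_3)\otimes\calF\right)=0\qquad\text{for }i>s,\ 0\le p\le t,
\]
for all $t\gg0$, \emph{with a threshold on $t$ independent of $p$}; this uniformity is the only subtle point.

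First I would dispose of the top piece $p=t$, which is $\Sym^t(K_1)$, an honest vector bundle. Since $\calF$ is coherent, the required vanishing for $i>s$ and $t\gg0$ is precisely the defining property of the $s$-ample bundle $K_1$. This is the only place hypothesis~(1) is used, and it plays the role that ampleness of the normal bundle plays in the smooth case (Theorem~\ref{theorem:smooth:constant}).

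Next, for $p<t$, set $m:=t-p\ge1$ and split $\Sym^m(K_3)$ via the truncation triangle $\tau^{\le -1}K_3\to K_3\to\calH^0(K_3)$; this gives a finite filtration of $\Sym^m(K_3)$ with graded pieces $\Sym^a(\tau^{\le -1}K_3)\otimes\Sym^{m-a}(\calH^0 K_3)$, $0\le a\le m$. When $a<m$ the factor $\Sym^{m-a}(\calH^0 K_3)$ is supported on $Z:=\Supp\calH^0(K_3)$ --- for $j\ge1$ a derived symmetric power $\Sym^j$ of a coherent sheaf vanishes off the support of the sheaf, as $\Sym^j$ commutes with localization --- hence $\Sym^p(K_1)\otimes\Sym^a(\tau^{\le -1}K_3)\otimes\Sym^{m-a}(\calH^0 K_3)\otimes\calF$ is a complex in $D^{\le 0}(X)$ supported on $Z$, and since $\dim Z\le s$ Grothendieck vanishing makes its $H^i$ vanish for $i>s$. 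There remains the piece $a=m$, namely $\Sym^p(K_1)\otimes\Sym^m(\tau^{\le -1}K_3)\otimes\calF$. Because $\tau^{\le -1}K_3\in D^{\le -1}(X)$, one has $\Sym^m(\tau^{\le -1}K_3)\in D^{\le -m}(X)$ (e.g.\ via $\Sym^m(N[1])\simeq\wedge^m(N)[m]$ with $N\in D^{\le 0}(X)$), so this complex lies in $D^{\le -m}(X)$ and hence has $H^i=0$ for $i>\dim X-m$; this already settles all $m$ with $m>\dim X-s$. For the finitely many remaining $m$, with $1\le m\le\dim X-s$, I would instead fix $m$, view $\Sym^m(\tau^{\le -1}K_3)\otimes\calF$ as a single object of $D^{\le 0}(X)$, and invoke the earlier lemma that an $s$-ample bundle tensored into a fixed object of $D^{\le 0}(X)$ has vanishing cohomology above degree $s$ once a sufficiently high symmetric power is taken; applied with the bundle $K_1$, this gives vanishing for our piece once $p\gg0$, and since $m$ runs over a fixed finite set, a single lower bound on $t$ makes $p=t-m$ large enough for all admissible $m$ simultaneously.

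The step I expect to be the main obstacle is exactly this last uniformity issue: the filtration of $\Sym^t(K_2)$ has on the order of $t$ graded pieces, so one cannot simply quote ``$t\gg0$'' piece by piece. The remedy is the dichotomy above --- the increasing connectivity of $\Sym^m(\tau^{\le -1}K_3)$ kills the pieces with $m=t-p$ large, while for the bounded (hence finitely many) values of $m$ the fixed-object vanishing statement for $s$-ample bundles applies with a uniform bound. Everything else --- the two filtrations, Grothendieck vanishing, and the behaviour of derived symmetric and exterior powers under truncation and shift --- is routine.
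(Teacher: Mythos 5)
Your argument is correct and is essentially the proof in the paper: the same two filtrations (by the defining triangle and by the truncation triangle for $K_3$), the same use of Grothendieck vanishing on the pieces involving $\Sym^{>0}(\calH^0 K_3)$, the same connectivity estimate $\Sym^m(\tau^{\le -1}K_3)\in D^{\le -m}$ to discard large $m$, and the same appeal to the preceding lemma (an $s$-ample bundle tensored with a fixed object of $D^{\le 0}$) for the finitely many remaining $m$, which is exactly how the paper secures uniformity in $t$. The only cosmetic difference is that the paper combines the two filtrations into a single one with trigraded pieces $\Sym^a(K_1)\otimes\Sym^b(\calH^0 K_3)\otimes\Sym^c(\tau^{<0}K_3)$ rather than applying them in sequence.
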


We employ the following convention (as well as some obvious variants): given a triangulated category $\calC$, an object $K \in \calC$, and a set $S$ of objects of $\calC$, we say that \emph{$K$ admits a finite filtration with graded pieces in $S$} if there exists a chain of maps
\[
0 \simeq K_0 \to K_1 \to K_2 \to \cdots \to K_n \simeq K
\]
in $\calC$ such that the cone of each $K_i \to K_{i+1}$ is in $S$. The main relevant property for us is the following: if $H^*$ is a cohomological functor on $\calC$, then to show that~$H^i(K) = 0$ for some fixed $i$, it suffices to check that $H^i(F) = 0$ for all $F \in S$.

\begin{proof}
We claim $\Sym^t(K_2)$ admits a finite filtration in $D(X)$ with graded pieces of the form
\[
\Sym^a(K_1) \otimes \Sym^b(\calH^0(K_3)) \otimes \Sym^c(\tau^{< 0} K_3),
\]
where $a,b,c$ are nonnegative integers with $a+b+c=t$. To see this, we view the triangles $K_1 \to K_2 \to K_3$ and $\tau^{< 0}(K_3) \to K_3 \to \calH^0(K_3)$ as providing a lift $\tilde{K_2}$ of~$K_2$ to the filtered derived category with $\mathrm{gr}^2(\tilde{K_2}) = K_1$, $\mathrm{gr}^1(\tilde{K_2}) = \tau^{< 0}(K_3)$, and $\mathrm{gr}^0(\tilde{K_2}) = \calH^0(K_3)$. Applying the symmetric power functor on the filtered derived category then gives the induced filtration on $\Sym^t(K_2)$ by \cite[Lemma~4.2.5]{IllusieCC1}. (Similar arguments will occur repeatedly in the sequel without further elaboration.)

Hence, it suffices to show that for any vector bundle $\calF$ on $X$, and $i > s$, one has
\[
H^i(X, \Sym^a(K_1) \otimes \Sym^b(\calH^0(K_3)) \otimes \Sym^c(\tau^{< 0} K_3) \otimes \calF) = 0
\]
for $a+b+c \gg 0$. Note first that all complexes in sight lie in $D^{\le 0}(X)$. Hence, if $b > 0$, then the complex
\[ \Sym^a(K_1) \otimes \Sym^b(\calH^0(K_3)) \otimes \Sym^c(\tau^{< 0} K_3) \otimes \calF\]
lies in $D^{\le 0}(X)$ and is supported on a closed subset of dimension $\le s$; thus, it has vanishing cohomology in degree $> s$. We are therefore reduced to showing that for any vector bundle~$\calF$ on $X$, and $i > s$, one has
\[
H^i(X, \Sym^a(K_1) \otimes \Sym^c(\tau^{< 0} K_3) \otimes \calF) = 0
\]
for $a + c \gg 0$. Since $\tau^{< 0 } K_3 \in D^{\le -1}(X)$, we have $\Sym^c(\tau^{< 0} K_3) \in D^{\le -c}(X)$. Since~$K_1$ and~$\calF$ are vector bundles, this shows that
\[
\Sym^a(K_1) \otimes \Sym^c(\tau^{< 0} K_3) \otimes \calF \in D^{\le -c}(X).
\]
As $R\Gamma(X,-)$ carries $D^{\le -c}(X)$ to $D^{\le \dim X - c}(\mathrm{pt})$, there is nothing to prove for $c \ge \dim X$. Thus, we are reduced to showing that for any vector bundle~$\calF$ on $X$, and $i > s$, one has
\[
H^i(X, \Sym^a(K_1) \otimes \Sym^c(\tau^{< 0} K_3) \otimes \calF) = 0
\]
for $a + c \gg 0$ and $0 \le c \le \dim X$. As $\Sym^c(\tau^{< 0} K_3) \otimes \calF$ runs through only finitely many values in $D^{\le 0}(X)$ for $0 \le c \le \dim X$, the claim follows from the $s$-ampleness in $K_1$.
\end{proof}

Using this criterion, we arrive at one of the main results of this section:

\begin{theorem}
\label{theorem:normal:lci}
Let $X$ be a closed lci subvariety of $\PP^n$. If the singular locus of $X$ has dimension at most $s$, then the normal bundle $\calN$ is $s$-ample.
\end{theorem}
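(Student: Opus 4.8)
The plan is to reduce the statement to the smooth case via the conormal exact sequence, but using the derived/partially-ample machinery developed above in order to absorb the singularities into a piece supported on $\Sing X$. Recall that for $X$ lci of codimension $c$ in $\PP^n$, the conormal sheaf $\calI/\calI^2$ is a vector bundle of rank $c$, so $\calN = (\calI/\calI^2)^\vee$ is a vector bundle; on the smooth locus $X^{\mathrm{sm}}$ it fits into the exact sequence $0 \to \calT_{X^{\mathrm{sm}}} \to \calT_{\PP^n}|_{X^{\mathrm{sm}}} \to \calN|_{X^{\mathrm{sm}}} \to 0$, which exhibits $\calN|_{X^{\mathrm{sm}}}$ as a quotient of the ample bundle $\calT_{\PP^n}|_{X^{\mathrm{sm}}}$, hence ample there. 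The difficulty is purely at the singular locus, whose dimension is $\le s$.

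The key device is the cotangent complex $L_X$, which for $X$ lci is a perfect complex concentrated in degrees $[-1,0]$, with $\calH^0(L_X) = \Omega_X$ and $\calH^{-1}(L_X) = $ (for $X$ a complete intersection locally) a locally free sheaf related to the conormal bundle; globally, since $X \hookrightarrow \PP^n$ is a regular immersion, the transitivity triangle for $X \to \PP^n \to \mathrm{Spec}\,\FF$ reads
\[
L_{\PP^n/\FF}|_X \to L_{X/\FF} \to L_{X/\PP^n} = (\calI/\calI^2)[1],
\]
i.e. there is an exact triangle $\Omega_{\PP^n}|_X \to L_X \to (\calI/\calI^2)[1]$ in $D(X)$. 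Dualizing (all terms are perfect), I get an exact triangle
\[
\calN[-1] \to L_X^\vee \to \calT_{\PP^n}|_X,
\]
equivalently, after rotating, an exact triangle $L_X^\vee \to \calT_{\PP^n}|_X \to \calN$, so that $\calN$ is the cofiber of a map from $L_X^\vee$ to the honest ample vector bundle $\calT_{\PP^n}|_X$. But $\calT_{\PP^n}|_X$ itself is ample (Serre's criterion plus the Euler sequence, exactly as in the smooth Example), hence $0$-ample. Now I want to apply Lemma~\ref{lemma:ample:triangle} to a suitable triangle $K_1 \to K_2 \to K_3$ with $K_1 = \calT_{\PP^n}|_X$ an $s$-ample (indeed $0$-ample) vector bundle, $K_2 = \calN$, and $K_3 = L_X^\vee[1]$; I must check hypothesis (2), namely that $K_3 \in D^{\le 0}(X)$ and $\calH^0(K_3)$ is supported in dimension $\le s$. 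Since $L_X \in D^{[-1,0]}(X)$ and $L_X$ is perfect, $L_X^\vee \in D^{[0,1]}(X)$, so $L_X^\vee[1] \in D^{[-1,0]}(X) \subseteq D^{\le 0}(X)$, giving the first half of (2). For the second half, $\calH^0(L_X^\vee[1]) = \calH^1(L_X^\vee) = \calExt^1(L_X, \calO_X)$; on the smooth locus $L_X = \Omega_X$ is locally free in degree $0$, so this $\calExt^1$ vanishes there, whence $\calH^0(K_3)$ is supported on $\Sing X$, which has dimension $\le s$. That is precisely condition (2).

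With both hypotheses of Lemma~\ref{lemma:ample:triangle} verified, the conclusion is that for every vector bundle $\calF$ on $X$ and every $i > s$, $H^i(X, \Sym^t(\calN) \otimes \calF) = 0$ for $t \gg 0$; upgrading from vector bundles $\calF$ to arbitrary coherent $\calF$ is exactly Lemma~\ref{lemma:ample:criterion} (write $\calF$ as a quotient of a sum of twisted line bundles and descend by the induction there). This is the definition of $\calN$ being $s$-ample, as desired. The main obstacle I anticipate is making the global triangle $L_X^\vee \to \calT_{\PP^n}|_X \to \calN$ precise and checking that it really is an exact triangle in $D(X)$ with the named terms — i.e. verifying that the conormal/cotangent transitivity triangle dualizes as claimed and that $\calH^0$ of the third term is genuinely supported on $\Sing X$ with the expected dimension bound (this last point is where the lci hypothesis is essential, since it guarantees $L_X$ is perfect and concentrated in $[-1,0]$ so that the dualization is well-behaved). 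The ampleness of $\calT_{\PP^n}|_X$ and the passage from vector-bundle to coherent test sheaves are routine given the lemmas already established.
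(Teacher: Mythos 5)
Your proposal is correct and follows essentially the same route as the paper: the paper also dualizes the (co)tangent/conormal triangle for $X \into \PP^n$ to exhibit $\calN$ as the cone on a map from the ample bundle $\calT_{\PP^n}|_X$ to a complex whose $\calH^0$ is $\calExt^1(\Omega^1_X,\calO_X)[1]$-type and supported on $\Sing X$, and then applies Lemma~\ref{lemma:ample:triangle}. The only cosmetic difference is that the paper first reduces to $X$ reduced and works with the sheaf-level conormal sequence and $\underline{\R\Hom}(\Omega^1_X,\calO_X)$, whereas you phrase everything via the cotangent complex $L_X$; your explicit appeal to Lemma~\ref{lemma:ample:criterion} to pass from vector-bundle to arbitrary coherent test sheaves is a correct (and slightly more careful) rendering of a step the paper leaves implicit.
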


\begin{proof}
If $X$ is non-reduced, there is nothing to prove as $s = \dim X$. Thus, we may assume that~$X$ is reduced. We then have an exact sequence
\[
0 \to \calI/\calI^2 \to \Omega^1_{\PP^n}|_X \to \Omega^1_X \to 0.
\]
It follows that $\Omega^1_X$ has projective dimension at most $1$ locally on $X$, so $\underline{\R\Hom}(\Omega_X^1,\calO_X)$ lives in $D^{\le 1}(X)$. Dualizing the previous sequence and shifting gives an exact triangle
\[
\calT_{\PP^n}|_X \to \calN \to \underline{\R\Hom}(\Omega^1_X,\calO_X)[1],
\]
where $K_1 := \calT_{\PP^n}|_X$ is an ample vector bundle, and $K_3 := \underline{\R\Hom}(\Omega^1_X,\calO_X)[1]$ lies in $D^{\le 0}(X)$ with $\calH^0(K_3)$ supported on the singular locus of $X$ (which has dimension $\le s$). Applying Lemma~\ref{lemma:ample:triangle} then completes the proof.
\end{proof}

\begin{proof}[Proof of Theorem~\ref{theorem:constant}]
Using that the normal bundle is $s$-ample, the argument is the same as in the proof of Theorem~\ref{theorem:smooth:constant}.
\end{proof}

\begin{remark}
Theorem~\ref{theorem:normal:lci} yields a construction of ample vector bundles: if $X \subset \PP^n$ has isolated lci singularities, then its normal bundle $\calN$ is ample. If the singularities of $X$ are not isolated, one cannot expect ampleness of the normal bundle as Example~\ref{example:normal:not:ample} shows.
\end{remark}

\begin{remark}
\label{remark:effective:constancy}
Fix an integer $j$, and a closed lci variety $X$ of dimension $d$ in $\PP^n$ over a field of characteristic zero. Using the results proven in \S \ref{section:kodaira}, one can find an explicit integer~$t_0 = t_0(j)$ such that for all $t \ge t_0$, the map 
\[
H^k(X_{t+1},\calO_{X_{t+1}}(j)) \to H^k(X_t,\calO_{X_t}(j))
\]
as in Theorem~\ref{theorem:constant} is an isomorphism (respectively, injective) if $k < \codim(\Sing X) - 1$ (respectively, if~$k = \codim(\Sing X) - 1$). For this, following the proof of Theorem~\ref{theorem:smooth:constant}, we must find an integer $t_0$ such that 
\[
H^k(X, \Sym^t(\calI/\calI^2)(j)) = 0
\]
for $k < \codim(\Sing X)$ and $t \ge t_0$. Following the proof of Theorem~\ref{theorem:kodaira:lci} below, we see that the group above is filtered by groups that are subquotients of 
\[
H^{k-a}(X, \wedge^a L_X(-b+j)) \quad \mathrm{and} \quad H^{k-a-1}(X, \wedge^a L_X(-b+1+j))
\]
for $a,b \ge 0$ and $a+b = t$. By Lemma~\ref{lemma:local:coh:cc}, these groups vanish for $a>d$: the corresponding groups on $U := X - \Sing X$ vanish since $\Omega^a_U = 0$ for $a > d$ by the smoothness of $U$. By Theorem~\ref{theorem:kan:lci}, these groups also vanish if $b > j+1$ and $k < \codim(\Sing X)$. Thus, if we set
\[
t_0 = d+j+2,
\]
then the pigeonhole principle shows that both the groups above vanish if $t \ge t_0$. 
\end{remark}

%%%%%%%%%%%%%%%%%%%%%%%%%%%%%%%%%%%%%%%%%%%%%%%%%%%
\section{The Kodaira vanishing theorem for thickenings}
\label{section:kodaira}
%%%%%%%%%%%%%%%%%%%%%%%%%%%%%%%%%%%%%%%%%%%%%%%%%%%

Our goal in this section is to prove the following version of Kodaira vanishing for thickenings of lci subvarieties of projective space:

\begin{theorem}
\label{theorem:kodaira:lci}
Let $X$ be a closed lci subvariety of $\PP^n$ over a field of characteristic zero. Then for each $k$ with $k < \codim(\Sing X)$, and $j>0$, we have
\[
H^k(X_t, \calO_{X_t}(-j)) = 0.
\]
\end{theorem}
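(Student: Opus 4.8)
The plan is to reduce the Kodaira vanishing statement for a thickening $X_t$ to the Kodaira–Akizuki–Nakano statement phrased via the cotangent complex — that is, to Theorem~\ref{theorem:kan:lci}, which asserts $H^a(X, \wedge^b L_X(-j)) = 0$ for $a+b < \codim(\Sing X)$ and $j > 0$ — together with the vanishing of the relevant groups on the smooth locus $U = X - \Sing X$ (where $\Omega^a_U = 0$ for $a > \dim X$). I would proceed by induction on $t$: for $t = 1$, the statement is precisely Theorem~\ref{theorem:kan:lci} with $b = 0$ (modulo identifying $\wedge^0 L_X$ with $\calO_X$), so the base case is handled. For the inductive step, consider the short exact sequence of sheaves on $\PP^n$
\[
0 \to \Sym^t(\calI/\calI^2)(-j) \to \calO_{X_{t+1}}(-j) \to \calO_{X_t}(-j) \to 0,
\]
which gives a long exact cohomology sequence showing that it suffices to prove $H^k(X, \Sym^t(\calI/\calI^2)(-j)) = 0$ and $H^{k-1}(X,\Sym^{t}(\calI/\calI^2)(-j)) = 0$ — more precisely, for the induction it suffices to control $H^k(X, \Sym^t(\calI/\calI^2)(-j))$ for all $t \ge 1$, $j > 0$, and $k < \codim(\Sing X)$.

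The heart of the argument, then, is the vanishing of $H^k(X, \Sym^t(\calI/\calI^2)(-j))$ for $k < \codim(\Sing X)$, $j > 0$, $t \ge 1$. Here I would mimic the filtration technique that appears in the proof of Theorem~\ref{theorem:normal:lci} and is foreshadowed in Remark~\ref{remark:effective:constancy}. The conormal sheaf $\calI/\calI^2$ fits, after dualizing the sequence $0 \to \calI/\calI^2 \to \Omega^1_{\PP^n}|_X \to \Omega^1_X \to 0$ appropriately, into an exact triangle relating it to the (shifted, truncated) cotangent complex $L_X$; equivalently, working in $D(X)$, one has a triangle whose outer terms are built from $\Omega^1_{\PP^n}|_X$ and $\wedge^{\bullet} L_X$ up to shift. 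Taking $\Sym^t$ in the (filtered) derived category — using the Dold–Puppe/Illusie formalism recalled in the excerpt, and \cite[Lemma~4.2.5]{IllusieCC1} for the induced filtration — produces a finite filtration on $\Sym^t(\calI/\calI^2)(-j)$ whose graded pieces are, up to twist, subquotients of the groups
\[
H^{k-a}(X, \wedge^a L_X(-b - j)) \quad\text{and}\quad H^{k-a-1}(X, \wedge^a L_X(-b + 1 - j))
\]
with $a, b \ge 0$ and $a + b = t$, exactly as in Remark~\ref{remark:effective:constancy}. Since the total cohomological+wedge degree in each such group is $\le k < \codim(\Sing X)$, and since $b + j > 0$ in the first group while $b - 1 + j \ge 0$ — with $b-1+j \ge 1$ unless $b = 0, j = 1$, a boundary case one handles separately (when $b = 0$ the second group is $H^{k-a-1}(X, \wedge^a L_X)$, which for $a > 0$ is covered by Theorem~\ref{theorem:kan:lci} applied with the twist tracked carefully, and for $a = 0$ reduces to the inductive hypothesis on a lower thickening) — Theorem~\ref{theorem:kan:lci} forces every graded piece to vanish. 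One also uses Lemma~\ref{lemma:local:coh:cc} to dispose of the terms with $a > \dim X$, where $\wedge^a L_X$ has cohomology supported on $\Sing X$ and the relevant sheaves on $U$ vanish. Assembling the filtration then yields the desired vanishing of $H^k(X,\Sym^t(\calI/\calI^2)(-j))$, and hence of $H^k(X_t, \calO_{X_t}(-j))$.

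The main obstacle I anticipate is the careful bookkeeping of twists and shifts in the filtration of $\Sym^t(\calI/\calI^2)(-j)$: one must set up the triangle relating $\calI/\calI^2$ to $L_X$ correctly (the conormal sequence gives $L_X$ in degrees $-1$ and $0$ for an lci $X$, so $\calI/\calI^2 = H^{-1}(L_X)$ placed in degree $0$, and $\Omega^1_X$ enters with a shift), then apply the divided/symmetric power functors in the filtered derived category and track how the Serre twist $\calO_X(-j)$ distributes across graded pieces $\Sym^a(\cdot) \otimes \wedge^{\text{or }\Gamma}(\cdot)$ — recalling that in characteristic zero $\Sym$ and $\Gamma$ agree. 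The degree inequality $a + b < \codim(\Sing X)$ needed to invoke Theorem~\ref{theorem:kan:lci} must be verified for each graded piece, and the positivity of the twist ($-b-j < 0$, i.e.\ $b + j > 0$) must survive throughout; the genuinely delicate point is the handful of boundary cases where the naive twist is $\calO_X$ rather than a strictly negative twist, which is why the induction on $t$ is structured to feed lower thickenings back in. Once the filtration and these edge cases are in hand, the remainder is the routine long-exact-sequence chase from the displayed short exact sequence.
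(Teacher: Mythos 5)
Your proposal follows essentially the same route as the paper: reduce to the vanishing of $H^k(X,\Sym^t(\calI/\calI^2)(-j))$ via the short exact sequence of the thickening, filter $\Sym^t(\calI/\calI^2)$ in the filtered derived category using the (shifted) transitivity triangle $L_X[-1]\to\calI/\calI^2\to\Omega^1_{\PP^n}|_X$, decompose $\Sym^a(\Omega^1_{\PP^n})$ by the Euler sequence as in Lemma~\ref{lemma:omega:Pn}, and kill every graded piece with Theorem~\ref{theorem:kan:lci}. One correction to your bookkeeping: the ``boundary case'' you flag ($b=0$, $j=1$, producing an untwisted group $H^{k-a-1}(X,\wedge^a L_X)$) is an artifact of applying the Euler-sequence decomposition to $\Sym^0(\Omega^1_{\PP^n})=\calO_X$. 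When the $\Omega^1_{\PP^n}$-exponent is zero there is nothing to decompose: the graded piece is just $\wedge^t L_X[-t](-j)$, contributing the single group $H^{k-t}(X,\wedge^t L_X(-j))$ with twist $-j<0$; and when the exponent is $\ge 1$ both twists appearing are $\le -j<0$. So every group that actually occurs falls under Theorem~\ref{theorem:kan:lci}, and your proposed fix for the spurious case --- invoking Theorem~\ref{theorem:kan:lci} for a zero twist, which that theorem does not cover --- is not needed.
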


For $t = 1$, this is exactly the Kodaira vanishing theorem; see \cite{Arapura:PAMS} for more on the singular case, including an example illustrating the sharpness of the bound on cohomological degrees. We prove the statement for larger $t$, when $X$ is smooth, using the Kodaira-Akizuki-Nakano (KAN) vanishing theorem, see \cite[\S 6.4]{EsnaultViehweg}; the relevance of the KAN vanishing theorem here is that the kernel $\calI^t/\calI^{t+1} \simeq \Sym^t(\calI/\calI^2)$ of the map $\calO_{X_{t+1}} \to \calO_{X_{t}}$ receives a natural and cohomologically interesting map from $\Omega^t_X[-t]$ in $D(X)$.

To carry out the preceding strategy for potentially singular $X$, we must therefore establish a suitable extension of the KAN vanishing theorem to the singular case, and this extension is one of the key results of this section. To formulate this, we need a homologically well-behaved version of the sheaf of $t$-forms $\Omega^t_X$ in the singular case. One such version involves the Deligne-Du-Bois complexes $\underline{\Omega}^t_X$ provided by Deligne's mixed Hodge theory; the corresponding version of KAN vanishing was established by Kov{\'a}cs in \cite{KovacsKAN}. However, this is not sufficient for our purposes as the relation between the complexes $\underline{\Omega}^t_X$ and the bundles $\Sym^t(\calI/\calI^2)$ seems rather weak. Instead, our strategy is to replace $\Omega^t_X$ with the \emph{derived} exterior power $\wedge^t L_X$ of the cotangent complex. If $X$ is reduced, then $L_X \simeq \Omega^1_X$. Nevertheless, we need not have $\wedge^p L_X \simeq \Omega^p_X$ (i.e., the left term may have cohomology in negative degrees), and thus we do not know if Theorem~\ref{theorem:kan:lci} is true with $\Omega^b_X$ in place of~$\wedge^b L_X$. Replacing $\Omega^t_X$ is quite natural, since $\calI/\calI^2$ is itself a shifted cotangent complex: $L_{X/\PP^n} \simeq \calI/\calI^2[1]$. With these changes, our version of KAN vanishing takes the following shape: 

\begin{theorem}
\label{theorem:kan:lci}
Let $X$ be a closed lci subvariety of $\PP^n$ over a field of characteristic zero.~Then
\[
H^a(X, \wedge^b L_X(-j)) = 0
\]
for $a+b < \codim(\Sing X)$ and $j>0$. 
\end{theorem}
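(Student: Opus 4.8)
The plan is to deduce Theorem~\ref{theorem:kan:lci} from the smooth case of KAN vanishing applied to the open locus $U := X \setminus \Sing X$, together with a careful analysis of how $R\Gamma(X, -)$ differs from $R\Gamma(U, -)$ in low cohomological degrees. The point is that $\wedge^b L_X$ restricts to $\Omega^b_U$ on the smooth locus (since $L_X|_U \simeq \Omega^1_U$ and $U$ is smooth, so derived and ordinary exterior powers agree there), and on $U$ the relevant vanishing $H^a(U, \Omega^b_U(-j)) = 0$ for $a + b < \dim X$ and $j > 0$ is genuine KAN vanishing — but $U$ is only quasi-projective, so I cannot apply KAN directly to $U$. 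Instead I would use the lci structure more globally.

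First I would reduce to the case where $X$ is reduced: if $X$ is non-reduced, then $\codim(\Sing X) = 0$ and there is nothing to prove. So assume $X$ reduced, hence $L_X \simeq \Omega^1_X$ in degree $0$, though $\wedge^b L_X$ may have cohomology in negative degrees. Next, the key structural input: since $X \subset \PP^n$ is lci, the conormal sequence $0 \to \calI/\calI^2 \to \Omega^1_{\PP^n}|_X \to \Omega^1_X \to 0$ gives, via the transitivity triangle for $X \hookrightarrow \PP^n$, an exact triangle $\calI/\calI^2 \to \Omega^1_{\PP^n}|_X \to L_X$ in $D(X)$, where the first term is a vector bundle placed in degree $0$ and the middle term is a vector bundle (note $L_{X/\PP^n} \simeq \calI/\calI^2[1]$, so this is $L_{X/\PP^n}[-1] \to L_{\PP^n}|_X \to L_X$). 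Taking derived exterior powers of this triangle, $\wedge^b L_X$ acquires a finite filtration in $D(X)$ with graded pieces $\wedge^p(\Omega^1_{\PP^n}|_X) \otimes \Gamma^q(\calI/\calI^2)$ with $p + q = b$ (using the characteristic-zero identification $\Sym^q = \Gamma^q$ on bundles, and the standard filtration on derived exterior powers of a two-step complex, as in \cite[Lemma~4.2.5 or~4.3.1]{IllusieCC1}); crucially each graded piece is an honest vector bundle on $X$. I would then run a descending induction, perhaps on $b$, or better, set up the statement so that I induct on something that lets me peel off one graded piece at a time.

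The heart of the argument will then be a local cohomology comparison: for a vector bundle (or more generally a bounded-above complex) $\calG$ on $X$, compare $H^a(X, \calG)$ with $H^a(U, \calG|_U)$ using the triangle $R\Gamma_{\Sing X}(X, \calG) \to R\Gamma(X, \calG) \to R\Gamma(U, \calG|_U)$. Since $\Sing X$ has codimension $c := \codim(\Sing X)$ in $X$ and $X$ is lci (hence Cohen-Macaulay), the local cohomology $R\Gamma_{\Sing X}(X, \calG)$ vanishes in degrees $< c$ for $\calG$ a vector bundle — this is where the Cohen-Macaulay property and the codimension bound enter (cf. the role of \cite[Proposition~III.4.1]{PS} earlier in the paper, or Grothendieck's finiteness/vanishing for local cohomology of CM modules). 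Hence $H^a(X, \calG) \cong H^a(U, \calG|_U)$ for $a < c - 1$ and injects for $a = c - 1$. Now I would need a version of KAN vanishing that outputs the vanishing of $H^a(U, \Omega^b_U(-j))$ directly; this is essentially what Lemma~\ref{lemma:local:coh:cc} (referenced in the excerpt) is designed to provide — reducing the computation over $X$ to one over the smooth $U$, for each graded piece, and then invoking the smooth KAN vanishing packaged so that it applies to these open pieces. The bound $a + b < c$ propagates correctly through the filtration because each graded piece contributes a shift controlled by $q$ (the $\Gamma^q(\calI/\calI^2)$ factor is in degree $0$, but the combinatorics of $p + q = b$ with the vanishing $H^{a'}(U, \Omega^p_U(\ast)) = 0$ for $a' + p < \dim X$, combined with $b \le \dim X$ on $U$ automatically, gives the claim).

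The main obstacle I anticipate is making the KAN input rigorous on the quasi-projective open $U$ rather than on a smooth projective variety: one cannot simply cite Akizuki-Nakano for $U$. The resolution — and I expect this is exactly the content of the auxiliary Lemma~\ref{lemma:local:coh:cc} invoked later — is to avoid ever working on $U$ directly, and instead to express everything in terms of $R\Gamma_{\Sing X}$-type comparisons over the \emph{projective} $X$, reducing Theorem~\ref{theorem:kan:lci} to the already-known singular Kodaira vanishing (the $t = 1$, $b = 0$ case, from \cite{Arapura:PAMS}) plus the filtration by vector bundles. A secondary subtlety is bookkeeping: one must verify that the filtration of $\wedge^b L_X$ has \emph{all} graded pieces of the expected bundle form with the expected twists, including handling the derived/divided-power subtleties, but in characteristic zero this is standard and I would cite \cite{IllusieCC1} rather than recompute. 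I would also double-check the edge case $b = 0$ (where $\wedge^0 L_X = \calO_X$ and the statement is precisely Kodaira vanishing for $X$ itself) serves as the base of the induction.
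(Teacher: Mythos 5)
Your setup is right as far as it goes — the reduction to the reduced case, the comparison of $R\Gamma(X,-)$ with $R\Gamma(U,-)$ via $\underline{R\Gamma}_{\Sing X}$ using the Cohen--Macaulay property and the Tor-dimension bound on $\wedge^b L_X$ (this is exactly the paper's Lemmas~\ref{lemma:tor:dim:cc:lci}--\ref{lemma:local:coh:cc}) — but the argument has a genuine gap precisely at the point you flag as the "main obstacle." You correctly observe that KAN vanishing cannot be cited on the quasi-projective $U$, but your proposed workaround — "reducing to the already-known singular Kodaira vanishing plus the filtration by vector bundles" — does not close the gap, and is in fact circular relative to the paper's logic. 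The filtration of $\wedge^b L_X$ coming from the transitivity triangle has graded pieces $\wedge^p(\Omega^1_{\PP^n}|_X)\otimes\Gamma^q(\calI/\calI^2)$, and the vanishing of $H^a\bigl(X,\wedge^p(\Omega^1_{\PP^n}|_X)\otimes\Gamma^q(\calI/\calI^2)(-j)\bigr)$ in the stated range is not a consequence of Kodaira vanishing for $\calO_X$; it is a statement of the same depth as the theorem you are trying to prove. (The paper uses this filtration only to establish the Tor-dimension bound in Lemma~\ref{lemma:tor:dim:cc:lci}, and in the proof of Theorem~\ref{theorem:kodaira:lci} the direction of deduction is reversed: KAN for $\wedge^b L_X$ is the \emph{input} used to control the graded pieces of $\Sym^t(\calI/\calI^2)$, not an output of Kodaira vanishing.)

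The missing ingredient is a resolution of singularities $\pi\colon Y\to X$ with $\pi^{-1}(U)\simeq U$ and $E=\pi^{-1}(\Sing X)$ a simple normal crossings divisor, together with \textbf{Steenbrink's vanishing theorem}: $H^a(Y,\Omega^b_Y(\log E)\otimes\pi^*\calO_X(-j))=0$ for $a+b<\dim X$ and $j>0$. This is the step that actually produces a vanishing group; it is nontrivial because $\pi^*\calO_X(1)$ is only semiample and big, for which plain KAN fails, and the log poles along $E$ are what rescue the statement. The proof then factors the canonical map $\wedge^b L_X\to Rj_*\Omega^b_U$ through $R\pi_*(\Omega^b_Y(\log E))$; your comparison lemma shows the composite is injective on $H^a$ for $a+b<\codim(\Sing X)$, hence so is the first map, and the middle term vanishes by Steenbrink. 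Without some substitute for this vanishing on the resolution, the local-cohomology comparison only tells you that $H^a(X,\wedge^b L_X(-j))$ injects into something — it cannot by itself show that something is zero.
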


If $X$ is smooth, this result coincides with the original KAN vanishing theorem. Our strategy for proving Theorem~\ref{theorem:kan:lci} is to reduce to the smooth case by passage to a resolution; more precisely, we invoke Steenbrink vanishing on the resolution. To see that the pullback to the resolution is injective in the relevant range, we will check that the relevant cohomology survives on the nonsingular locus of $X$, and thus \emph{a fortiori} on a suitable resolution; this maneuver is ultimately the source of the ``$\codim(\Sing X)$" term in Theorem~\ref{theorem:kan:lci}. 

\begin{remark}
Theorem~\ref{theorem:kan:lci} is valid even when $X$ is not reduced: in this case, the singular locus has codimension $0$, so the theorem says that $H^a(X, \wedge^b L_X(-j))=0$ for $a + b < 0$ and~$j > 0$. Unlike in the smooth case, this assertion has content: the complex $\wedge^b L_X$ may have cohomology sheaves in negative degree, and thus hypercohomology in negative degree as well. To prove this assertion, note that Lemma~\ref{lemma:tor:dim:cc:lci} ensures that $\wedge^b L_X(-j) \in D^{\ge -b}$. Thus, if $a + b < 0$, then $a < -b$, so~$H^a(X, \wedge^b L_X(-j)) = 0$. 
\end{remark}

\begin{remark}
When $X$ is smooth, using Serre duality, the KAN vanishing theorem can be formulated as follows: Set $d=\dim X$. Then $H^a(X, \wedge^b L_X(j)) = 0$ for $a + b > d$ and~$j > 0$; this reformulation uses the identification $\wedge^b L_X \simeq (\wedge^{d-b} L_X)^\vee \otimes K_X$, which is not available in the singular case (essentially because $K_X \simeq \det(L_X)$ is \emph{not} $\wedge^d L_X$). Due to this discrepancy, the preceding Serre dual formulation fails dramatically in the singular case, as the following example illustrates:

Let $X \subset \PP^n$ be a projective variety of dimension $d > 0$ with a single isolated lci singularity at $x$. Then $\wedge^{d+1} L_X$ is supported at $x$ (since $X - \{x\}$ is smooth of dimension $d$), and~$\calH^0(\wedge^{d+1} L_X) \simeq \Omega^{d+1}_X \neq 0$ (since $\Omega^1_X$ is generated by $\ge d+1$ elements near $x$). Thus, it follows that $H^0(X, \wedge^{d+1} L_X(j)) \neq 0$ for any $j$, which gives the desired example.
\end{remark}

\begin{remark}
The lci assumption in Theorem~\ref{theorem:kan:lci} is necessary. Indeed, if $X \subset \PP^n$ has an isolated singularity at $x \in X$, and if $X$ is not lci at $x$, then $\calH^i(L_X) \neq 0$ for arbitrarily negative integers~$i$, see \cite{Avramov:Quillen}. Since these sheaves are supported at $x$, we immediately deduce that for any $j$, we have $H^k(X, L_{X}(-j)) \neq 0$ for arbitrarily negative $k$. 
\end{remark}

For the rest of this section, we fix a closed lci subvariety $X \subset \PP^n$ of dimension $d > 0$ over a field of characteristic $0$. Let $Z$ be the singular locus of $X$, with complement $U := X - Z$, and set $s = \dim Z$. Write $j\colon U \to X$ for the resulting open immersion. As usual, we write~$\calI/\calI^2$ for the conormal bundle of $X$, and $\calN = (\calI/\calI^2)^\vee$ for the normal bundle. 

Since $U$ is smooth, we have ${L_X}|_U \simeq L_U \simeq \Omega^1_U$. Applying $- \otimes \wedge^b L_X$ to $\calO_X \to Rj_* \calO_U$ induces a canonical map $\wedge^b L_X \to R j_* \Omega^b_U$ and consequently a canonical map
\[
H^a(X, \wedge^b L_X(j)) \to H^a(U, \Omega^b_U(j)).
\]
One of the key ingredients of the proof of Theorem~\ref{theorem:kan:lci} is the injectivity of
\[
H^a(X, \wedge^b L_X(j)) \to H^a(U, \Omega^b_U(j))
\]
which is proved in Lemma~\ref{lemma:local:coh:cc}. Before proving Lemma~\ref{lemma:local:coh:cc}, we record two lemmas.

\begin{lemma}
\label{lemma:tor:dim:cc:lci}
The complex $\wedge^b L_X$ has $\Tor$-dimension $\le b$ locally on $X$.
\end{lemma}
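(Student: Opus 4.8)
The plan is to work locally, since $\Tor$-dimension is a local property. First I would fix a point $x\in X$ and pass to the local ring $A=\calO_{X,x}$, which by the lci hypothesis is of the form $R/(\underline{f})$ for a regular local ring $R$ and a regular sequence $\underline{f}=f_1,\dots,f_c$. In this situation the cotangent complex is concrete: $L_A = L_{X/\PP^n}|_x$ fits into the transitivity triangle for $R\to A$, and since $R$ is regular (smooth over the base field) and $A$ is a complete intersection quotient, one has $L_{A/R}\simeq (\underline{f})/(\underline{f})^2[1]$, a free $A$-module placed in cohomological degree $-1$. Concretely, $L_A$ is represented by the two-term complex $[\,N \to \Omega^1_{R}\otimes_R A\,]$ sitting in degrees $-1$ and $0$, where $N=(\underline{f})/(\underline{f})^2$ is free of rank $c$ and $\Omega^1_R\otimes_R A$ is free; in other words $L_A$ is a perfect complex represented by a length-one complex of finite free modules in degrees $[-1,0]$.

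Next I would compute the derived exterior power of this two-term complex of free modules. Using the Dold–Puppe/Illusie formalism — or, in characteristic zero, the elementary construction recalled in the excerpt — the derived exterior power $\wedge^b$ of a complex $[\,N\to F\,]$ concentrated in degrees $-1,0$ with $N,F$ free admits a functorial finite filtration whose graded pieces are $\Gamma^a(N)\otimes \wedge^{b-a} F$ placed in cohomological degree $-a$, for $0\le a\le b$ (this is the décalage / Koszul-type filtration; see \cite[Ch.~I, \S4]{IllusieCC1}, and note $\Sym^a(N[1])\simeq\Gamma^a(N)[a]$ up to the usual duality in characteristic zero, which here is irrelevant since everything is free). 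Each graded piece $\Gamma^a(N)\otimes\wedge^{b-a}F$ is a finite free $A$-module — hence of $\Tor$-dimension $0$ — sitting in degree $-a$, so it has $\Tor$-dimension $\le a\le b$. Since $\Tor$-dimension (equivalently, the property of being quasi-isomorphic to a bounded-above complex of flats concentrated in the appropriate range) is stable under the finite filtrations appearing here — a complex filtered by finitely many pieces each of $\Tor$-amplitude in $[-b,0]$ itself has $\Tor$-amplitude in $[-b,0]$, by the long exact $\Tor$ sequences — we conclude that $\wedge^b L_A$ has $\Tor$-dimension $\le b$. Running this over all $x\in X$ gives the statement.

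The main obstacle, such as it is, is purely bookkeeping: one must be careful that "derived exterior power of a complex" genuinely commutes with the representation by a free complex and carries the standard filtration with the asserted graded pieces in the asserted degrees, so that the cohomological shifts line up to give exactly the bound $b$ rather than something weaker. In characteristic zero this is clean because $\wedge^b$ can be computed termwise on a complex of free modules after taking the naive exterior power of the total complex and extracting the appropriate symmetric-group isotypic component, and the filtration by "number of factors coming from $N$" is manifestly compatible. There is no real analytic or geometric difficulty here — the lci hypothesis does all the work by making $L_X$ perfect of amplitude $[-1,0]$, and everything else is formal manipulation of bounded complexes of free modules. One could alternatively phrase the whole argument in terms of $\Tor$-amplitude directly: $L_A\in D^{[-1,0]}$ with flat cohomology, so $L_A$ has $\Tor$-amplitude $[-1,0]$, and derived exterior powers satisfy $\wedge^b(D^{[-1,0],\mathrm{flat}})\subseteq D^{[-b,0],\mathrm{flat}}$, which is exactly the claim.
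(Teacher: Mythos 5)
Your argument is correct and is essentially the paper's: both exhibit $L_X$ (via a transitivity triangle) as an extension of a shifted locally free module $\calI/\calI^2[1]$ by a locally free module, filter $\wedge^b L_X$ so that the graded pieces are $\wedge^{b-a}(\text{free})\otimes\Gamma^a(\text{free})$ placed in degree $-a$ with $0\le a\le b$, and conclude since each piece is locally free of amplitude $\ge -b$. The only difference --- localizing at a point and using an abstract presentation $\calO_{X,x}=R/(\underline{f})$ instead of the global conormal sequence for $X\subset\PP^n$ --- is immaterial (and your parenthetical about $\Sym^a(N[1])$ should read $\wedge^a(N[1])\simeq\Gamma^a(N)[a]$, which is the d\'ecalage you actually use).
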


\begin{proof}
There is nothing to show for $b = 0$. The exact triangle
\[
\Omega^1_{\PP^n}|_X \to L_X \to \calI/\calI^2[1]
\]
verifies the claim for $b = 1$. In general, taking wedge powers of the previous triangle, shows that $\wedge^b L_X$ admits a finite filtration in $D(X)$ with graded pieces of the form 
\[
\Omega^i_{\PP^n}|_X \otimes \wedge^j(\calI/\calI^2[1]) \simeq \Omega^i_{\PP^n|_X} \otimes \Gamma^j(\calI/\calI^2)[j]
\]
for $i + j = b$ and $i,j \ge 0$. This immediately gives the claim. 
\end{proof}

\begin{lemma}
\label{lemma:local:coh:cm}
We have $\underline{R\Gamma}_Z(\calO_X) \in D^{\ge d-s}(X)$. 
\end{lemma}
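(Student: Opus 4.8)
The plan is to reduce everything to a stalkwise statement in local algebra. Since $\underline{R\Gamma}_Z(\calO_X)$ is defined locally on $X$, it is enough to show that its cohomology sheaves $\calH^i(\underline{R\Gamma}_Z(\calO_X))$ vanish for $i < d-s$, and vanishing of a sheaf may be checked on stalks. At a point $x \notin Z$ the stalk is zero, so I would fix $x \in Z$, set $A := \calO_{X,x}$, and let $\mathfrak{a} \subseteq A$ be the ideal defining $Z$ near $x$; the stalk in question is then $H^i_{\mathfrak a}(A)$. Thus the lemma reduces to the claim that $H^i_{\mathfrak a}(A) = 0$ for all $i < d-s$, for every $x \in Z$.

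Next I would use the lci hypothesis to say that $A = \calO_{X,x}$ is Cohen--Macaulay. Then $H^i_{\mathfrak a}(A)$ vanishes for $i$ below the grade of $\mathfrak a$ on $A$, and for a Cohen--Macaulay local ring this grade equals $\operatorname{ht}(\mathfrak a)$, with $\operatorname{ht}(\mathfrak a) + \dim(A/\mathfrak a) = \dim A$ (see e.g.\ \cite{Bruns-Herzog}). Hence $H^i_{\mathfrak a}(A) = 0$ for $i < \dim\calO_{X,x} - \dim\calO_{Z,x}$, so it remains to establish the purely numerical inequality $\dim\calO_{X,x} - \dim\calO_{Z,x} \ge d-s$.

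For the numerical part, put $\delta = \dim\overline{\{x\}}$ (closure taken in $X$). As $X$ is equidimensional of dimension $d$, we get $\dim\calO_{X,x} = d-\delta$. Since $x \in Z$ and $Z$ is closed, $\overline{\{x\}} \subseteq Z$, so every irreducible component $Z_i$ of $Z$ passing through $x$ contains $\overline{\{x\}}$ and has $\dim Z_i \le s$; the dimension formula for varieties gives $\dim\calO_{Z_i,x} = \dim Z_i - \delta$, whence $\dim\calO_{Z,x} = \max_i \dim\calO_{Z_i,x} \le s-\delta$. Subtracting yields $\dim\calO_{X,x} - \dim\calO_{Z,x} \ge (d-\delta)-(s-\delta) = d-s$, as needed.

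I do not anticipate a serious obstacle: the proof is a short exercise once the Cohen--Macaulay property of $X$ is in hand, and the only delicate point is to keep the local dimension bookkeeping straight and to remember that it is precisely Cohen--Macaulayness that upgrades the grade bound on local cohomology to the codimension of the support. (The lci hypothesis enters only through this Cohen--Macaulay property, so the same argument applies to any Cohen--Macaulay $X$.)
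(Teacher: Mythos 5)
Your proof is correct and takes essentially the same route as the paper's: both reduce to the stalkwise statement that $H^k_{\mathfrak a}(A)=0$ for $k<\height \mathfrak a$ over the Cohen--Macaulay local rings $A=\calO_{X,x}$, the paper merely leaving implicit the dimension bookkeeping showing $\height \mathfrak a_x\ge d-s$ at every $x\in Z$, which you carry out explicitly.
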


\begin{proof}
Translating to local terms, and using that $X$ is Cohen-Macaulay, one needs to verify the following: if $R$ is a Cohen-Macaulay ring, and $I$ an ideal, then $H^k_I(R) = 0$ for integers~$k$ with $k < \height I$. But this is immediate from the Cohen-Macaulay property.
\end{proof}

Combining the previous lemmas, we obtain the promised injectivity:

\begin{lemma}
\label{lemma:local:coh:cc}
The homotopy-kernel $K$ of the canonical map 
\[
\wedge^b L_X \to R j_* \Omega^b_U
\]
lies in $D^{\ge d-s-b}(X)$. Consequently, the canonical map 
\[
H^a(X, \wedge^b L_X(j)) \to H^a(U, \Omega^b_U(j))
\]
is injective for $a < d-s-b$ and any $j$.
\end{lemma}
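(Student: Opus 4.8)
The plan is to identify the homotopy-kernel $K$ with an explicit derived tensor product and then read off its connectivity from Lemmas~\ref{lemma:tor:dim:cc:lci} and~\ref{lemma:local:coh:cm}. Concretely, I would begin by recalling that the open immersion $j\colon U\to X$ with closed complement $Z$ sits in an exact triangle $\underline{R\Gamma}_Z(\calO_X)\to\calO_X\to Rj_*\calO_U$ in $D(X)$. Since $X$ is lci, $L_X$ — and hence each $\wedge^b L_X$ — is a perfect complex, so the projection formula gives $Rj_*\calO_U\otimes\wedge^b L_X\simeq Rj_*(j^*\wedge^b L_X)$; and because $U$ is smooth, $j^*\wedge^b L_X\simeq\wedge^b\Omega^1_U=\Omega^b_U$. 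The canonical map $\wedge^b L_X\to Rj_*\Omega^b_U$ was by construction obtained by applying $-\otimes\wedge^b L_X$ to $\calO_X\to Rj_*\calO_U$, so tensoring the displayed triangle with $\wedge^b L_X$ and using these identifications shows $K\simeq\underline{R\Gamma}_Z(\calO_X)\otimes\wedge^b L_X$.

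Next I would combine the two connectivity inputs. By Lemma~\ref{lemma:local:coh:cm}, $\underline{R\Gamma}_Z(\calO_X)\in D^{\ge d-s}(X)$; by Lemma~\ref{lemma:tor:dim:cc:lci}, $\wedge^b L_X$ has Tor-dimension $\le b$ locally on $X$. Since tensoring an object of $D^{\ge m}(X)$ with a complex of Tor-dimension $\le b$ yields an object of $D^{\ge m-b}(X)$, we conclude $K\in D^{\ge d-s-b}(X)$, which is the first assertion.

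For the consequence, I would twist the triangle $K\to\wedge^b L_X\to Rj_*\Omega^b_U$ by the line bundle $\calO_X(j)$ (which preserves the connectivity estimate) and pass to hypercohomology; using the projection formula once more to identify $R\Gamma(X,Rj_*\Omega^b_U(j))\simeq R\Gamma(U,\Omega^b_U(j))$, the long exact sequence reads
\[
\cdots\to H^a(X,K(j))\to H^a(X,\wedge^b L_X(j))\to H^a(U,\Omega^b_U(j))\to\cdots.
\]
For $a<d-s-b$ the first term vanishes because $K(j)\in D^{\ge d-s-b}(X)$, so the middle map is injective, as claimed.

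I do not expect a serious obstacle: the argument is a formal consequence of the two preceding lemmas plus the projection formula. The one point that requires care is the identification of the target of the canonical map, i.e. checking that $\wedge^b L_X$ is a perfect complex so that the projection formula applies — and this is precisely where the lci hypothesis on $X$ enters, since it guarantees that $L_X$ is perfect of Tor-amplitude $[-1,0]$, and hence so is every $\wedge^b L_X$.
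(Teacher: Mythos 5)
Your proposal is correct and follows essentially the same route as the paper: tensor the triangle $\underline{R\Gamma}_Z(\calO_X)\to\calO_X\to Rj_*\calO_U$ with $\wedge^b L_X$ to identify $K\simeq \underline{R\Gamma}_Z(\calO_X)\otimes\wedge^b L_X$, then combine Lemma~\ref{lemma:local:coh:cm} with the Tor-dimension bound of Lemma~\ref{lemma:tor:dim:cc:lci} to place $K$ in $D^{\ge d-s-b}(X)$, and conclude by twisting and taking cohomology. The extra care you take with the projection formula and the perfectness of $\wedge^b L_X$ is a correct elaboration of a step the paper leaves implicit.
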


\begin{proof}
The homotopy-kernel of $\calO_X \to Rj_* \calO_U$ is, by definition, the local cohomology complex $\underline{R\Gamma}_Z(\calO_X)$, which lies in $D^{\ge d-s}(X)$ by Lemma~\ref{lemma:local:coh:cm}. Hence, after tensoring with $\wedge^b L_X$, we see that the homotopy-kernel $K$ in question is identified with $\wedge^b L_X \otimes \underline{R\Gamma}_Z(\calO_X)$; this lies in~$D^{\ge d-s-b}(X)$ as $\wedge^b L_X$ has $\Tor$-dimension $\le b$ by Lemma~\ref{lemma:tor:dim:cc:lci}, proving the first part. The second part is immediate since the functor $K \mapsto R\Gamma(X,K(j))$ carries $D^{\ge i}$ to $D^{\ge i}$ for each $i$ and any $j$.
\end{proof}

To proceed further along the lines sketched above, choose a resolution $\pi\colon Y \to X$, i.e.,~$\pi$ is a proper birational map with $Y$ smooth, $\pi^{-1}(U) \simeq U$, and $\pi^{-1}(Z) =: E$ is a simple normal crossings divisor. We need the following form of Steenbrink vanishing:

\begin{theorem}[Steenbrink]
\label{theorem:steenbrink}
We have 
\[
H^a(Y, \Omega^b_Y(\log E) \otimes \pi^* \calO_X(-j)) = 0
\]
for $a+b < d$ and $j > 0$. 
\end{theorem}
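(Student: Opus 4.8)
The plan is to reduce to $j=1$ and to recognize the statement as Steenbrink's logarithmic vanishing theorem, which I would then cite. Since $\calO_{\PP^n}(j)|_X$ is very ample for every integer $j\ge1$, it suffices to treat $j=1$; put $\calL:=\pi^*\calO_X(1)$. Because $\calO_X(1)$ is globally generated, so is $\calL$, whence $\calL$ is semiample (and in particular nef); and since $\pi$ is birational, the projection formula gives $\calL^{d}\cdot[Y]=\calO_X(1)^{d}\cdot[X]>0$, so $\calL$ is also big. The assertion to be proved is thus: for a smooth projective variety $Y$ of dimension $d$, a simple normal crossings divisor $E\subset Y$, and a semiample and big line bundle $\calL$ on $Y$, one has $H^a(Y,\Omega^b_Y(\log E)\otimes\calL^{-1})=0$ whenever $a+b<d$. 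In our situation $\calL$ is moreover pulled back from $X$ along a resolution that is an isomorphism over $X\setminus\Sing X$ and whose exceptional divisor is exactly $E$ — precisely the geometric input of Steenbrink's vanishing theorem. So the theorem follows from \cite[\S 6]{EsnaultViehweg} and the references therein.

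For the reader who wants the argument, I would reproduce Steenbrink's proof, which parallels the cyclic-covering proof of the Kodaira--Akizuki--Nakano vanishing theorem in \cite[\S 6]{EsnaultViehweg}. Choose $m\gg0$ and a general $D_X\in|\calO_X(m)|$, so that the complement $X\setminus D_X$ is affine; set $D:=\pi^*D_X$, an effective divisor with $\calO_Y(D)\cong\calL^{m}$, and (after blowing up the strata of $D\cap E$, which we absorb into $\pi$ without disturbing the hypotheses) arrange that $D+E$ is simple normal crossings. Let $\tau\colon\widetilde Y\to Y$ be the degree-$m$ cyclic cover branched along $D$, followed by a functorial resolution of singularities; then $\widetilde Y$ is smooth and projective of dimension $d$, the reduced total transform $\widetilde E$ of $D+E$ is simple normal crossings, and $\mu_m$ acts on the pair $(\widetilde Y,\widetilde E)$. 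The standard eigensheaf calculation for cyclic covers exhibits $\Omega^b_Y(\log(D+E))\otimes\calL^{-1}$ — up to a twist by an effective $\tau$-exceptional divisor, which only improves vanishing — as a direct summand of $\tau_*\Omega^b_{\widetilde Y}(\log\widetilde E)$, and an induction on $\dim X$ using a general hyperplane section of $X$ converts the superfluous $\log D$ poles into a lower-dimensional instance of the theorem. One is thereby reduced to showing that the corresponding $\mu_m$-isotypic component of $H^a(\widetilde Y,\Omega^b_{\widetilde Y}(\log\widetilde E))$ vanishes for $a+b<d$.

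The remaining point is Hodge-theoretic: the logarithmic Hodge--de Rham spectral sequence of $(\widetilde Y,\widetilde E)$ degenerates at $E_1$ (Deligne), is $\mu_m$-equivariant, and abuts to $H^{a+b}(\widetilde Y\setminus\widetilde E,\CC)$; combined with the semiampleness of $\calL$ — which forces the relevant isotypic component of the abutment to be concentrated in degrees $\ge d$, via an Artin-type vanishing statement coming from the affine variety $X\setminus D_X$ downstairs — this yields the desired vanishing. I expect this last step to be the main obstacle: the result is genuinely stronger than the Kodaira--Akizuki--Nakano theorem applied on $Y$, because $\calL$ is only nef and big rather than ample, and it is precisely the semiampleness of $\calL$ (available here only because $\calL$ is pulled back from $X$) that lets the cyclic covering reduce the problem to an affine variety. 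All the rest — the branched cover, the log resolution, the isotypic bookkeeping, and the dimension induction — is routine but laborious, so the most economical presentation is simply to invoke Steenbrink's theorem.
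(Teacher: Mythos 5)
Your proposal is correct and takes essentially the same approach as the paper: the statement is precisely Steenbrink's vanishing theorem for the semiample and big line bundle $\pi^*\calO_X(j)$ on a log resolution, and the paper likewise simply cites Steenbrink (alternatively deducing it from Esnault--Viehweg's logarithmic KAN theorem, using that $\pi$ is an isomorphism off $E$ to see that the relevant invariant $r$ vanishes). The cyclic-covering sketch you append is the standard underlying argument and is not reproduced in the paper.
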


This result is remarkable for the following reason: it is known that, unlike the Kodaira vanishing theorem, the KAN vanishing theorem fails for line bundles that are merely semiample and big (like $\pi^* \calO_X(1)$). The previous theorem thus says that the introduction of log poles gets around this problem. This is proven in \cite[Theorem~2~(a$'$)]{Steenbrink}. Alternatively, it may be deduced from Esnault-Viehweg's logarithmic KAN vanishing theorem as follows:

\begin{proof}
We apply \cite[Theorem~6.7]{EsnaultViehweg} with the following notational changes: the roles of $X$ and $Y$ are reversed, and $\calA = \calO_X(j)$. To get our desired conclusion, we need the quantity $r(\tau|_{Y-E})$ in \emph{loc.~cit.} to be $0$. However, this is immediate from the definition of $r$ (see \cite[\S 4.10]{EsnaultViehweg}) and the fact that the induced map $Y-E \to X-E$ is an an isomorphism. 
\end{proof}

We can now complete the proof of Theorem~\ref{theorem:kan:lci} as follows:

\begin{proof}[Proof of Theorem~\ref{theorem:kan:lci}]
The open immersion $j\colon U \to X$ factors as $U \stackrel{h}{\to} Y \stackrel{\pi}{\to} X$ by construction. As the divisor $E$ on $Y$ restricts to the trivial divisor on $U$, since $E \cap U = \emptyset$, we obtain induced maps
\[
\wedge^b L_X \to R \pi_* (\Omega^b_Y(\log E)) \to R j_* \Omega^b_U,
\]
and hence also maps
\[
\wedge^b L_X(-j) \to R \pi_* (\Omega^b_Y(\log E))(-j) \to R j_* \Omega^b_U(-j)
\]
by twisting. Now, if $j > 0$, the composite map is injective on $H^a(X,-)$ for $a + b < d-s$ by Lemma~\ref{lemma:local:coh:cc}, and hence the same is true for the first map. On the other hand, 
\[
H^a(X, R \pi_* \Omega^b_Y(\log E)(-j)) \simeq H^a(Y, \Omega^b_Y(\log E) \otimes \pi^* \calO_X(-j)),
\]
by the projection formula. The latter vanishes for $j > 0$ and $a + b < d$ by Theorem~\ref{theorem:steenbrink}. Combining these facts, we obtain the conclusion.
\end{proof}

To move from Theorem~\ref{theorem:kan:lci} to Theorem~\ref{theorem:kodaira:lci}, it is convenient to use the following:

\begin{lemma}
\label{lemma:omega:Pn}
For each $t \ge 1$, there exists an exact triangle
\[
\calO_{\PP^n}(-t+1)^{\oplus r}[-1] \to \Sym^t(\Omega^1_{\PP^n}) \to \calO_{\PP^n}(-t)^{\oplus s}
\]
for suitable integers $r,s \ge 1$.
\end{lemma}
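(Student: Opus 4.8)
The plan is to obtain the triangle by rotating a short exact sequence of vector bundles on $\PP^n$. Write $\calB := \calO_{\PP^n}(-1)^{\oplus(n+1)}$, so that $\Sym^m(\calB) \cong \calO_{\PP^n}(-m)^{\oplus\binom{n+m}{m}}$ for every $m \ge 0$, and recall the Euler sequence
\[
0 \to \Omega^1_{\PP^n} \to \calB \xrightarrow{\ \epsilon\ } \calO_{\PP^n} \to 0.
\]
I claim there is a short exact sequence
\[
0 \to \Sym^t(\Omega^1_{\PP^n}) \to \Sym^t(\calB) \xrightarrow{\ \phi\ } \Sym^{t-1}(\calB) \to 0.
\]
Granting this, substitute $\Sym^t(\calB) = \calO_{\PP^n}(-t)^{\oplus s}$ with $s = \binom{n+t}{t} \ge 1$ and $\Sym^{t-1}(\calB) = \calO_{\PP^n}(-t+1)^{\oplus r}$ with $r = \binom{n+t-1}{t-1} \ge 1$; rotating the resulting short exact sequence in $D(\PP^n)$ (a short exact sequence $0 \to A \to B \to C \to 0$ yields the distinguished triangle $C[-1] \to A \to B$) produces exactly the triangle $\calO_{\PP^n}(-t+1)^{\oplus r}[-1] \to \Sym^t(\Omega^1_{\PP^n}) \to \calO_{\PP^n}(-t)^{\oplus s}$ asserted in the lemma.

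The map $\phi$ is the composite
\[
\Sym^t(\calB) \longrightarrow \calB \otimes \Sym^{t-1}(\calB) \xrightarrow{\ \epsilon \otimes \mathrm{id}\ } \calO_{\PP^n} \otimes \Sym^{t-1}(\calB) = \Sym^{t-1}(\calB),
\]
where the first arrow is the natural comultiplication $b_1\cdots b_t \mapsto \sum_i b_i \otimes (b_1\cdots\widehat{b_i}\cdots b_t)$. Three things then need checking. First, since $\epsilon$ kills every local section of $\Omega^1_{\PP^n}$, the map $\phi$ annihilates $\Sym^t(\Omega^1_{\PP^n})$, which is a subbundle of $\Sym^t(\calB)$ because the Euler sequence is locally split and $\Sym^t$ preserves locally split injections; hence $\phi$ factors through a morphism of vector bundles $\bar\phi\colon \Sym^t(\calB)/\Sym^t(\Omega^1_{\PP^n}) \to \Sym^{t-1}(\calB)$. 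Second, $\phi$ (hence $\bar\phi$) is surjective, which can be checked on fibers: at a point $p$, choosing a splitting $\calB_p = \ker(\epsilon_p) \oplus \langle f \rangle$, the fiber of $\phi$ sends a monomial $w^\alpha f^k$ to $k\, w^\alpha f^{k-1}$, and this is surjective in characteristic zero. Third, $\bar\phi$ is an isomorphism: by Pascal's rule its source has rank $\binom{n+t}{t} - \binom{n+t-1}{t} = \binom{n+t-1}{t-1} = \rank\Sym^{t-1}(\calB)$, and a surjection of vector bundles of equal rank is an isomorphism. Combining these three points yields the displayed short exact sequence.

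I expect the only point of substance to be the use of characteristic zero in the second step: in positive characteristic the scalar $k$ appearing in $k\, w^\alpha f^{k-1}$ may vanish, and then $\phi$ need not be surjective. Everything else is bookkeeping of ranks together with the elementary identity $\Sym^m(\calO_{\PP^n}(-1)^{\oplus(n+1)}) \cong \calO_{\PP^n}(-m)^{\oplus\binom{n+m}{m}}$ used to read off $s$ and $r$.
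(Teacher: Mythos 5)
Your proof is correct, but it takes a genuinely different route from the paper's. The paper works entirely in the derived category: it rotates the Euler sequence into the triangle $\calO_{\PP^n}[-1] \to \Omega^1_{\PP^n} \to \calO_{\PP^n}(-1)^{\oplus(n+1)}$, applies the derived functor $\Sym^t$, and observes that in the resulting filtration all graded pieces involving $\Sym^a(\calO_{\PP^n}[-1]) \simeq \wedge^a\calO_{\PP^n}[-a]$ with $a \ge 2$ vanish, so only two pieces survive and the filtration collapses to the desired triangle. You instead exhibit the triangle as the rotation of an honest short exact sequence of vector bundles
\[
0 \to \Sym^t(\Omega^1_{\PP^n}) \to \Sym^t\bigl(\calO_{\PP^n}(-1)^{\oplus(n+1)}\bigr) \xrightarrow{\ \phi\ } \Sym^{t-1}\bigl(\calO_{\PP^n}(-1)^{\oplus(n+1)}\bigr) \to 0,
\]
with $\phi$ the contraction against the Euler map, verified by a fiberwise computation and a rank count. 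Your argument is more elementary and self-contained (no derived symmetric powers or Illusie-style filtrations are needed, since $\Omega^1_{\PP^n}$ is a bundle and derived $\Sym^t$ agrees with the classical one here), and it isolates precisely where characteristic zero enters, namely the invertibility of the scalars $k \le t$ in $\phi(w^\alpha f^k) = k\,w^\alpha f^{k-1}$; the paper's version is shorter given the derived formalism it has already set up for the other filtration arguments in the same section. Both arguments are sound and yield the same triangle with $r = \binom{n+t-1}{t-1}$ and $s = \binom{n+t}{t}$.
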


\begin{proof}
The shifted Euler sequence on $\PP^n$ is given by
\[
\calO_{\PP^n}[-1] \to \Omega^1_{\PP^n} \to \calO_{\PP^n}(-1)^{\oplus (n+1)}.
\]
Taking symmetric powers, and observing that $\Sym^t(\calO_{\PP^n}[-1]) \simeq \wedge^t \calO_{\PP^n}[-t]$ vanishes for integers~$t \ge 2$, we obtain an exact triangle
\[
\Sym^{t-1}(\calO_{\PP^n}(-1)^{\oplus (n+1)})[-1] \to \Sym^t(\Omega^1_{\PP^n}) \to \Sym^t(\calO_{\PP^n}(-1)^{\oplus (n+1)}).
\]
The lemma now follows immediately from the behavior of $\Sym$ under direct sums.
\end{proof}

\begin{proof}[Proof of Theorem~\ref{theorem:kodaira:lci}]
The kernel of the surjection $\calO_{X_{t+1}} \to \calO_{X_t}$ is given by $\Sym^t(\calI/\calI^2)$, so it suffices to show that 
\[
H^k(X, \Sym^t(\calI/\calI^2)(-j)) = 0
\]
for $k < d -s$ and $j, t > 0$. The shifted transitivity triangle for $X \into \PP^n$ takes the form
\[
L_X[-1] \to \calI/\calI^2 \to \Omega^1_{\PP^n}|_X.
\]
Thus, $\Sym^t(\calI/\calI^2)$ admits a finite filtration in $D(X)$ with graded pieces given by
\[
M_{a,b} := \Sym^a(\Omega^1_{\PP^n}|_X) \otimes \Sym^b(L_X[-1]) \simeq \Sym^a(\Omega^1_{\PP^n}|_X) \otimes \wedge^b L_X[-b]
\]
for $a,b \ge 0$ and $a + b = t > 0$. It suffices to show that for each such pair $(a,b)$, we have
\[
H^k(X, M_{a,b}(-j)) = 0
\]
for $k < d-s$ and $j > 0$. If $a = 0$, then $b > 0$, and then the claim follows from Theorem~\ref{theorem:kan:lci}. For $a > 0$, by Lemma~\ref{lemma:omega:Pn}, we obtain an exact triangle
\[
\calO_X(-a+1)^{\oplus r}[-1] \otimes \wedge^b L_X[-b](-j) \to M_{a,b}(-j) \to \calO_X(-a)^{\oplus s} \otimes \wedge^b L_X[-b](-j)
\]
for suitable integers $r,s > 0$. This simplifies to
\[
\big(\wedge^b L_X(-j-a+1)\big)^{\oplus r}[-b-1] \to M_{a,b}(-j) \to \big(\wedge^b L_X(-j-a)\big)^{\oplus s}[-b].
\]
The outer terms have no $k$-th cohomology for $k < d-s$ by Theorem~\ref{theorem:kan:lci}, so neither does the middle term.
\end{proof}

\begin{remark}
It is not reasonable to expect Kodaira vanishing for arbitrary thickenings, even in the smooth case; see Examples~\ref{example:2x3:thickening} and~\ref{example:abelian:thickening}.
\end{remark}

%%%%%%%%%%%%%%%%%%%%%%%%%%%%%%%%%%%%%%%%%%%%%%%%%%%
\section{Injectivity of maps on $\Ext$ modules}
\label{section:EMS}
%%%%%%%%%%%%%%%%%%%%%%%%%%%%%%%%%%%%%%%%%%%%%%%%%%%

Let $I$ be an ideal of a commutative noetherian ring $R$. Since the local cohomology module $H^k_I(R)$ can be expressed as the direct limit
\[
\dlim_t\Ext^k_R(R/I^t,R),
\]
it follows immediately that $H^k_I(R)$ equals $\dlim_t\Ext^k_R(R/I_t,R)$, where $\{I_t\}$ is any decreasing chain of ideals that is cofinal with the chain $\{I^t\}$. In this context, Eisenbud, Musta{\c{t}}{\u{a}}, and Stillman raised the following:

\begin{question}\cite[Question~6.1]{EMS}\label{Q1}
Let $R$ be a polynomial ring. For which ideals $I$ of $R$ does there exist a chain $\{I_t\}$ as above, such that for each $k,t$, the natural map
\[
\Ext^k_R(R/I_t,R)\to H^k_I(R)
\]
is injective?
\end{question}

\begin{question}\cite[Question~6.2]{EMS}\label{Q2}
Let $R$ be a polynomial ring. For which ideals $I$ of $R$ is the natural map $\Ext^k_R(R/I,R)\to H^k_I(R)$ an inclusion?
\end{question}

Question~\ref{Q1} is motivated by the fact that $H^k_I(R)$ is typically not finitely generated as an~$R$-module, whereas the modules $\Ext^k_R(R/I_t,R)$ are finitely generated; consequently, an affirmative answer to Question~\ref{Q1} yields a filtration of $H^k_I(R)$ by the readily computable modules~$\Ext^k_R(R/I_t,R)$. We record some partial results:

\begin{enumerate}[\rm(i)]
\item\label{remark:depth}
If $k$ equals the length of a maximal $R$-regular sequence in $I$, then the natural map
\[
\Ext^k_R(R/I,R)\to H^k_I(R)
\]
is injective. This is readily proved using a minimal injective resolution of $R$, see for example \cite[Remark~1.4]{SW}.

\item Suppose $I$ is a set-theoretic complete intersection, i.e., $f_1,\dots,f_n$ is a regular sequence generating $I$ up to radical, then the ideals $I_t=(f_1^t,\dots,f_n^t)$ form a descending chain, cofinal with $\{I^t\}$, such that the natural maps
\[
\Ext^k_R(R/I_t,R)\to H^k_I(R)
\]
are injective for each $k\ge0$ and $t\ge1$: when $k=n$, this follows from~(\ref{remark:depth}), whereas if~$k\neq n$, then $\Ext^k_R(R/I_t,R)=0=H^k_I(R)$.

\item\label{remark:monomial}
Let $R$ be a polynomial ring over a field. Suppose an ideal $I$ is generated by square-free monomials $m_1,\dots,m_r$, set $I^{[t]}=(m_1^t,\dots,m_r^t)$ for $t\ge1$. Then, by \cite[Theorem~1~(i)]{Lyubeznik-monomial}, the natural maps
\[
\Ext^k_R(R/I^{[t]},R) \to H^k_I(R)
\]
are injective for each $k\ge0$ and $t\ge1$; see also \cite[Theorem~1.1]{Mustata}.

\item\label{remark:fpure}
Suppose $R$ is a regular ring containing a field of characteristic $p>0$, and $I$ is an ideal such that $R/I$ is $F$-pure. Set $I^{[p^t]}=(a^{p^t}\mid a\in I)$. Then the natural maps
\[
\Ext^k_R(R/I^{[p^t]},R)\to H^k_I(R)
\]
are injective for each $k\ge0$ and $t\ge0$; this is \cite[Theorem~1.3]{SW}.

\item Let $R$ be a regular ring and $I$ an ideal of $R$. Suppose $R$ has a flat endomorphism $\phi$ such that $\{\phi^t(I)R\}_{t\ge0}$ is a decreasing chain of ideals cofinal with $\{I^t\}_{t\ge0}$, and the induced endomorphism $\bar{\phi}\colon R/I\to R/I$ is pure. Then, for all $k\ge 0$ and $t\ge 0$, the natural map
\[
\Ext^k_R(R/\phi^t(I)R,R)\to H^k_I(R)
\]
is injective, \cite[Theorem~2.8]{SW}. This recovers (\ref{remark:fpure}) by taking $\phi$ to be the Frobenius endomorphism of $R$, and (\ref{remark:monomial}) by taking the endomorphism of the polynomial ring that raises each variable to a power.

\item\label{remark:RWW}
Let $X=(x_{ij})$ be an $m\times n$ matrix of indeterminates with $m\le n$, and $R$ be the ring of polynomials in $x_{ij}$ with coefficients in a field of characteristic zero. Let $I$ be the ideal generated by the size $m$ minors of $X$, i.e., the \emph{maximal minors}. Then the natural map
\[
\Ext^k_R(R/I^t,R) \to H^k_I(R)
\]
is injective for each $k\ge0$ and $t\ge1$ by~\cite[\S 4]{RaicuWeymanWitt}, see also~\cite[Theorem~4.2]{RaicuWeyman}. The injectivity may fail for non-maximal minors, Example~\ref{example:nonmax}, and in the case of positive characteristic, Example~\ref{example:charp}. The recent paper \cite{Raicu} contains further injectivity results as well as a Kodaira vanishing theorem for thickenings of determinantal ideals.

\item Corollary~\ref{corollary:ext} of the present paper provides a partial answer to Question~\ref{Q2} after restricting to an arbitrary but fixed graded component.
\end{enumerate}

%%%%%%%%%%%%%%%%%%%%%%%%%%%%%%%%%%%%%%%%%%%%%%%%%%%
\section{Examples}
\label{section:examples}
%%%%%%%%%%%%%%%%%%%%%%%%%%%%%%%%%%%%%%%%%%%%%%%%%%%

In this section, we record examples illustrating our results, and primarily their sharpness. We first begin with an example illustrating Theorem~\ref{theorem:constant}; the main purpose of this example is to record some calculations that will be useful later.

\begin{example}
\label{example:2x3}
Let $R$ be the polynomial ring in a $2\times 3$ matrix of indeterminates over a field~$\FF$ of characteristic zero, and consider the ideal $I$ generated by the size two minors of the matrix. Note that $X=\Proj R/I$ equals $\PP^1\times\PP^2$ under the Segre embedding in $\PP^5$.

The local cohomology module $H^3_I(R)$ is isomorphic, as a graded module, to~$H^6_\frakm(R)$, see for example~\cite[Theorem~1.2]{LSW}. It follows that it has Hilbert series
\[
\sum_j \rank\, {H^3_I(R)}_j\,z^j = \frac{z^{-6}}{(1-z^{-1})^6} = z^{-6}+6z^{-7}+21z^{-8}+56z^{-9}+126z^{-10}+\cdots.
\]
The vector space ranks
\[
\rank{\Ext^3_R(R/I^t,R)}_j = \rank H^2(X_t,\calO_{X_t}(-6-j)),
\]
as computed by \emph{Macaulay2}, \cite{GS}, are recorded in the following table; entries that are $0$ are omitted. The last row records the stable value attained along each column, namely the rank of $H^3_I(R)_j$, as computed from the Hilbert series displayed above.

\begin{table}[H]
\begin{center}
\begin{tabular}{|c||*{11}{c|}}
\hline
\backslashbox{$t$}{$j$} & $-6$ & $-7$ & $-8$ & $-9$ & $-10$ & $-11$ & $-12$ & $-13$ & $-14$ & $-15$ & $-16$\\
\hline
$1$ & \makebox[1.5em] & \makebox[1.5em] & \makebox[1.5em] & \makebox[1.5em] & \makebox[1.5em] & \makebox[1.5em] & \makebox[1.5em] & \makebox[1.5em] & \makebox[1.5em] & \makebox[1.5em] & \\
\hline
$2$ & $1$ & & & & & & & & & & \\
\hline
$3$ & $1$ & $6$ & $3$ & & & & & & & & \\
\hline
$4$ & $1$ & $6$ & $21$ & $16$ & $6$ & & & & & & \\
\hline
$5$ & $1$ & $6$ & $21$ & $56$ & $51$ & $30$ & $10$ & & & & \\
\hline
$6$ & $1$ & $6$ & $21$ & $56$ & $126$ & $126$ & $91$ & $48$ & $15$ & & \\
\hline
$7$ & $1$ & $6$ & $21$ & $56$ & $126$ & $252$ & $266$ & $216$ & $141$ & $70$ & $21$ \\
\hline
\hline
$\dlim_t$ & $1$ & $6$ & $21$ & $56$ & $126$ & $252$ & $462$ & $792$ & $1287$ & $2002$ & $3003$ \\[4pt]
\hline
\end{tabular}
\end{center}
\end{table}

Since $I$ is the ideal of maximal sized minors, \cite[\S 4]{RaicuWeymanWitt} implies that the map
\[
\Ext^3_R(R/I^t,R) \to \Ext^3_R(R/I^{t+1},R)
\]
is injective for each $t\ge 1$. Thus, for a fixed integer $j$, if the rank of ${\Ext^3_R(R/I^{t_0},R)}_j$ equals that of ${H^3_I(R)}_j$ for some $t_0$, then the displayed map is an isomorphism for each $t\ge t_0$.

The injectivity may fail for non-maximal minors, see Example~\ref{example:nonmax}.
\end{example}

Next, we show that the Kodaira vanishing theorem fails for arbitrary infinitesimal thickenings of smooth varieties. First, we give such an example that is not lci; in this case, the vanishing fails in all negative degrees.

\begin{example}
\label{example:2x3:thickening}
Let $R$ and $I$ as in Example~\ref{example:2x3}, e.g., set $R=\FF[u,v,w,x,y,z]$ where $\FF$ has characteristic zero, and $I=(\Delta_1, \Delta_2, \Delta_3)$ where
\[
\Delta_1=vz-wy,\quad \Delta_2=wx-uz,\quad \Delta_3=uy-vx.
\]
The ideal $J=(\Delta_1^2, \Delta_2, \Delta_3)$ has the same radical as $I$, i.e., $X=\Proj R/J$ has the same reduced structure as $\Proj R/I=\PP^1\times\PP^2$. We claim that
\[
H^2(X,\calO_X(j))\neq 0\quad\text{ for each }j\le 0.
\]
The projective resolution of $R/J$ is readily computed to be
\[
\CD
0@<<< R
@<{\begin{pmatrix}\Delta_1^2 & \Delta_2 & \Delta_3\end{pmatrix}}<<R^3
@<{\begin{pmatrix}0& u& x\\ -\Delta_3 & v\Delta_1 & y\Delta_1\\ \Delta_2& w\Delta_1& z\Delta_1\end{pmatrix}}<<R^3
@<{\begin{pmatrix}\Delta_1\\ -x\\ u\end{pmatrix}}<<R^1@<<<0,
\endCD
\]
from which it follows that
\[
\Ext^3_R(R/J,R) \cong \big(R/(\Delta_1,u,x)R\big)(6).
\]
Thus, $\Ext^3_R(R/J,R)$ has Hilbert series
\[
\sum_j \rank {\Ext^3_R(R/J,R)}_j\,z^j = \frac{z^{-6}+z^{-5}}{(1-z)^3},
\]
specifically, ${\Ext^3_R(R/J,R)}_j\neq0$ for each $j\ge-6$. The claim follows using Serre duality.
\end{example}

Next, we give an example of an lci thickening of an abelian variety where Kodaira vanishing also fails:

\begin{example}
\label{example:abelian:thickening}
Let $A$ be an abelian variety, and let $\calL \in \Pic(A)$ be an ample line bundle. Then $\calO_A \oplus \calL$ is an augmented $\calO_A$-algebra with $\calL$ given the structure of an ideal whose square is zero. Let $X := \underline{\Spec} A$. The $\calO_A$-algebra maps
\[
\calO_A \to \calO_A \oplus \calL \quad \mathrm{and} \quad \calO_A \oplus \calL \to \calO_A
\]
give a thickening $i\colon A \into X$ that admits a (finite) retraction $r\colon X \to A$ with~$r_* \calO_X = \calO_A \oplus \calL$. It is immediate that $X$ has lci singularities. Moreover, $\calM = r^* \calL$ is an ample line bundle on $X$, and one checks that $H^k(X,\calM^{-1}) \neq 0$ for $0 \le k \le \dim A$.
\end{example}

Recall that one of our motivations was Question~\ref{Q1} raised in \cite{EMS}; in the context covered by Theorem~\ref{theorem:constant}, we obtained a partial positive answer to this question: for suitable~$k$, the map ${\Ext^k_R(R/I_t,R)}_j\to {H^k_I(R)}_j$ is injective for $t \gg 0$, once we fix a degree $j$. The next example shows that one cannot expect to do better, i.e., one cannot expect uniformity in~$j$. Thus, our answer to Question~\ref{Q1}, in the context of standard graded rings covered by Theorem~\ref{theorem:constant}, is optimal.

\begin{example}
\label{example:nonmax}
Let $R$ be the ring of polynomials in a $3\times 3$ matrix of indeterminates over a field~$\FF$ of characteristic zero; set $I$ to be the ideal generated by the size two minors of the matrix. Then $X=\Proj R/I$ equals $\PP^2\times\PP^2$ under the Segre embedding in $\PP^8$. The ideal~$I$ has cohomological dimension $6$; in particular, $H^9_I(R)=0$.

We claim that $\Ext^9_R(R/I^t,R)$ is nonzero for each $t\ge 2$. The primary decomposition of powers of determinantal ideals is given by \cite[Corollary~7.3]{DEP}; in our case, for each~$t\ge2$, this provides the irredundant primary decomposition of $I^t$ as
\[
I^t = I^{(t)} \cap \frakm^{2t},
\]
where $\frakm$ is the homogeneous maximal ideal of $R$. It follows that for each $t\ge 2$, the maximal ideal is an associated prime of $R/I^t$, and hence that $\pd_R R/I^t=9$ by the Auslander-Buchsbaum formula, which proves the claim. The ranks of the vector spaces
\[
{\Ext^9_R(R/I^t,R)}_j
\]
are recorded in the following table:
\begin{table}[H]
\begin{center}
\begin{tabular}{|c||*{11}{c|}}
\hline
\backslashbox{$t$}{$j$} & $-12$ & $-13$ & $-14$ & $-15$ & $-16$ & $-17$ & $-18$ & $-19$ & $-20$ & $-21$ & $-22$ \\
\hline
$1$ & \makebox[1.5em] & \makebox[1.5em] & \makebox[1.5em] & \makebox[1.5em] & \makebox[1.5em] & \makebox[1.5em] & \makebox[1.5em] & \makebox[1.5em] & \makebox[1.5em] & \makebox[1.5em] & \\
\hline
$2$ & $1$ & & & & & & & & & & \\
\hline
$3$ & & & $9$ & & & & & & & & \\
\hline
$4$ & & & & $1$ & $45$ & & & & & & \\
\hline
$5$ & & & & & & $9$ & $165$ & & & & \\
\hline
$6$ & & & & & & & $1$ & $45$ & $495$ & & \\
\hline
$7$ & & & & & & & & & $9$ & $165$ & $1287$ \\
\hline
\end{tabular}
\end{center}
\end{table}

The table bears out the two facts that we have proved:
\begin{enumerate}[\quad\rm(i)]
\item each row other than the first is nonzero, since $\Ext^9_R(R/I^t,R)\neq0$ for $t\ge 2$, and

\item each column is eventually zero since the stable value, i.e., ${H^9_I(R)}_j$, is zero, and each column is eventually constant by Theorem~\ref{theorem:constant}.
\end{enumerate}
\end{example}

The next example illustrates why Theorem~\ref{theorem:constant} is limited to characteristic zero:

\begin{example}
\label{example:charp}
Consider the analog of Example~\ref{example:2x3} in characteristic $p>0$, i.e., $X$ is the Segre embedding of $\PP^1\times\PP^2$ in $\PP^5$, working over the field $\FF_p$. We claim that $H^2(X_t,\calO_{X_t})$ is nonzero for each $t\ge 2$, though the map
\begin{equation}
\label{eq:t:tprime}
H^2(X_{t'},\calO_{X_{t'}}) \to H^2(X_t,\calO_{X_t})
\end{equation}
is zero for all $t'\gg t\ge 1$.

Let $R$ be the ring of polynomials over $\ZZ$ in a $2\times 3$ matrix of indeterminates; set $I$ to be the ideal generated by the size two minors of the matrix, and $\frakm$ to be the ideal generated by the indeterminates. The $\ZZ$-module ${H^3_\frakm(R/I^t)}_0$ is finitely generated, see, for example, \cite[Theorem~III.5.2]{Hartshorne:ag}. Moreover, for $t\ge 2$, it is a rank one $\ZZ$-module, since
\[
{H^3_\frakm(R/I^t)}_0\otimes_\ZZ\QQ = \QQ
\]
by Example~\ref{example:2x3}. It follows that multiplication by $p$ is not surjective on ${H^3_\frakm(R/I^t)}_0$. The exact sequence
\[
\CD
{H^3_\frakm(R/I^t)}_0 @>p>> {H^3_\frakm(R/I^t)}_0 @>>> {H^3_\frakm(R/(pR+I^t))}_0
\endCD
\]
then shows that
\[
{H^3_\frakm(R/(pR+I^t))}_0 = H^2(X_t,\calO_{X_t})
\]
is nonzero, as claimed.

The determinantal ring $R/(pR+I)$ is Cohen-Macaulay, and of dimension $4$. It follows by the flatness of the Frobenius endomorphism of $R/pR$ that $R/(pR+I^{[p^e]})$ is Cohen-Macaulay for each $e\ge 1$, and so
\[
H^3_\frakm(R/(pR+I^{[p^e]})) = 0.
\]
For $t'\gg t\ge 1$, the map~\eqref{eq:t:tprime} factors through the degree zero component of this module, and hence must be zero.
\end{example}

Likewise, Theorem~\ref{theorem:constant} may fail for singular projective varieties:

\begin{example}
\label{example:singular}
Take $Y\subset\PP^6$ to be the projective cone over $\PP^1\times\PP^2\subset\PP^5$, working over a field of characteristic zero, i.e., the projective cone over $X$ as in Example~\ref{example:2x3}. Then $Y_t$ is the projective cone over $X_t$, and it follows that
\[
\ilim H^3(Y_t, \calO_{Y_t}) = \sum_{j\ge1} \ilim H^2(X_t, \calO_{X_t}(j)).
\]
The Serre dual of this is
\[
\dlim_t{\Ext^3_R(R/I^t,R)}_{\le-7} = {H^3_I(R)}_{\le-7} \cong {H^6_\frakm(R)}_{\le-7},
\]
which has infinite rank.

The variety $Y$ has an isolated singular point that is Cohen-Macaulay but not Gorenstein; for a similar example with an isolated rational singularity that is Gorenstein, take the projective cone over $\PP^2\times\PP^2\subset\PP^8$.
\end{example}

Theorem~\ref{theorem:normal:lci} shows that if $X \subset \PP^n$ is lci with a singular locus of dimension $s$, then the normal bundle is $s$-ample. We now give examples illustrating that one cannot do better. We first give an example in the case $s = \dim X$:

\begin{example}
\label{example:normal:not:ample}
Let $\PP^s \subset \PP^N$ be a linear subspace of codimension $c = N - s$ defined by an ideal $\calI \subset \calO_{\PP^N}$, so $\calI/\calI^2 \simeq \calO_{\PP^s}(-1)^c$. Choose a rank $c-1$ subbundle $\calE \subset \calI/\calI^2$ so that the quotient line bundle $\calQ=(\calI/\calI^2)/\calE$ has positive degree; this can be done if $c \gg 0$ by repeated use of the Euler sequence. Then the inverse image of $\calE$ in $\calI$ defines an ideal $\calJ \subset \calO_{\PP^N}$ with $\calI^2 \subset \calJ \subset \calI$, and $\calJ/\calI^2 = \calE$, and $\calI/\calJ = \calQ$. Set $X$ to be the closed subscheme of $\PP^N$ defined by $\calJ$. Thus, $X$ is an infinitesimal thickening of $\PP^s$.

We first claim that $X$ is lci. Indeed, locally on $X$, we can choose generators $f_1,\dots,f_c$ in~$\calI$, generating $\calI/\calI^2$ as a free module, such that $f_1,\dots,f_{c-1}$ give a basis for $\calE$, and $f_c$ maps to a basis vector for $\calQ$. But then the ideal $\calJ$ is generated by $f_1,\dots,f_{c-1}, f_c^2$; this is a regular sequence, so $X$ is lci.

Next, we show that $\calQ^{\otimes 2}$ occurs as a subbundle of $(\calJ/\calJ^2)|_{\PP^s}$. For this, note that
\[
(\calJ/\calJ^2)|_{\PP^s} = \calJ/\calI\calJ
\]
since $\calJ \subset \calI$. The multiplication map $\calI^{\otimes 2} \to \calI^2 \subset \calJ$ then gives a map
\[
\calQ^{\otimes 2} = (\calI/\calJ)^{\otimes 2} \to \calJ/\calI\calJ.
\]
On the other hand, $\calJ/\calI\calJ$ is an extension of $\calJ/\calI^2 \simeq \calE$ by $\calI^2/\calI\calJ$; the reader may then check that the previous map gives an isomorphism $\calQ^{\otimes 2} \simeq \calI^2/\calI\calJ$, and hence the desired subbundle $\calQ^{\otimes 2} \subset \calJ/\calI\calJ$.

Finally, we show that $\calN$ is $s$-ample, but not $(s-1)$-ample. Since $\dim X = s$, the $s$-ampleness is clear. For the rest, assume towards contradiction that $\calN$ is $(s-1)$-ample. Then, by Lemma~\ref{lemma:quotient:partially:ample}, any quotient of $\calN|_{\PP^s}$ is $(s-1)$-ample. On the other hand, our construction shows that the negative degree line bundle $\calQ^{\otimes -2}$ occurs as a quotient of~$\calN|_{\PP^s}$. Thus, $\calQ^{\otimes -2}$ must be $(s-1)$-ample, which is clearly absurd: $H^s(\PP^s, \Sym^t(\calQ^{\otimes -2}))$ is nonzero for all $t \gg 0$.

We make the above construction explicit in the case $s=1$ and $N=4$: Consider the polynomial ring $R=\FF[x,y,u,v,w]$ with the linear subspace $\PP^1$ defined by $I=(u,v,w)$ and its infinitesimal thickening $X:=\Proj R/J$ defined by $J=I^2+(uy-vx, vy-wx)$. It is easily seen that $X$ is lci. The rank of $H^0(X_t,\calO_{X_t})$ increases with $t$, so the maps
\[
H^0(X_{t+1},\calO_{X_{t+1}}) \to H^0(X_t,\calO_{X_t})
\]
cannot be isomorphisms for $t\gg0$. Nonetheless, as $X$ is lci, Corollary~\ref{corollary:finite:rank} guarantees that~$\ilim_t H^0(X_t,\calO_{X_t})$ has finite rank; indeed, it is rank one.
\end{example}

Next, we construct a class of examples for each $0 \le s < \dim X$:

\begin{example}
\label{ex:NormalNotAmpleReduced}
Let $Z$ be a normal projective variety of positive dimension, with an isolated singularity at $z \in Z$, such that $\underline{\Ext}^1(\Omega_Z^1, \calO_Z)$ is non-trivial and killed by the maximal ideal $\frakm_z \subset \calO_X$; for example, we may take $Z$ to be the projective cone over the smooth quadric surface in $\PP^3$. Let $Y$ be a smooth projective variety of dimension $s$. Let $X = Z \times Y$, so $X$ is lci and $\Sing(X) = \{z\} \times Y$ has dimension $s$. Choose a projective embedding $X \into \PP^n$, and let $\calN$ be the normal bundle of $X$ in this embedding. Then $\calN$ is $s$-ample by Theorem~\ref{theorem:normal:lci}. We claim that one cannot do better, i.e., $\calN$ is not $(s-1)$-ample. Assume towards contradiction that $\calN$ is $(s-1)$-ample. Then the same is true for $\calN|_{\Sing(X)}$.

On the other hand, the dual of the transitivity triangle for $X \into \PP^n$ gives, as in the proof of Theorem~\ref{theorem:normal:lci}, an exact triangle
\[
\calT_{\PP^n}|_X \to \calN \to \underline{\R\Hom}(\Omega^1_X, \calO_X)[1].
\]
In particular, on $\calH^0$, we obtain a quotient map
\[
\calN \onto \underline{\Ext}^1(\Omega^1_X, \calO_X) =: \calF.\]
On the other hand, since $X$ is a product and $Y$ is smooth, it is easy to see that
\[
\calF = \underline{\Ext}^1(\Omega^1_X,\calO_X) \simeq p_1^* \underline{\Ext}^1(\Omega^1_Z,\calO_Z).\]
By our assumption on $Z$, it follows that $\calF \simeq \calO_{\Sing(X)}^{\oplus r}$ is a trivial vector bundle supported on $\Sing(X)$. Thus, we can find a surjective map
\[
\calN|_{\Sing(X)} \onto \calO_{\Sing(X)}.
\]
Since $\calN|_{\Sing(X)}$ is assumed to be $(s-1)$-ample, the same is then true for $\calO_{\Sing(X)}$ by Lemma~\ref{lemma:quotient:partially:ample}. Since $\Sym^t(\calO_{\Sing(X)}) \simeq \calO_{\Sing(X)}$, it follows that for each integer $j$, we have
\[
H^s(\Sing(X), \calO_{\Sing(X)}(j)) = 0.
\]
However, this is clearly absurd: since $s$ equals the dimension of $\Sing(X)$, this group is nonzero for $j \ll 0$ by Serre duality.

We record some computations in the case $s=1$. Take $R$ to be a polynomial ring in $8$ variables, and $I$ an ideal such that $R/I$ is isomorphic to the Segre product
\[
\FF[x_0,x_1,x_2,x_3]/(x_1^2-x_2x_3)\ \# \ \FF[y_0,y_1].
\]
The ideal $I$ has height $4$, and the singular locus of $X:=\Proj R/I$ has dimension $1$, so Corollary~\ref{corollary:ext} implies that the maps ${\Ext^k_R(R/I^t,R)}_j \to {H^k_I(R)}_j$ must be injective for $k>5$ and $t\gg0$. However, the injectivity fails when $k=5$. The table records the numbers
\[
\rank{\Ext^5_R(R/I^t,R)}_j = \rank H^2(X_t,\calO_{X_t}(-8-j)).
\]
A singular cohomology calculation shows that $H^5_I(R)=H^8_\frakm(R)$, which gives the last row.

\begin{table}[H]
\begin{center}
\begin{tabular}{|c||*{9}{c|}}
\hline
\backslashbox{$t$}{$j$} & $-8$ & $-9$ & $-10$ & $-11$ & $-12$ & $-13$ & $-14$ & $-15$ & $-16$ \\
\hline
$1$ & \makebox[1.5em] & \makebox[1.5em] & \makebox[1.5em] & \makebox[1.5em] & \makebox[1.5em] & \makebox[1.5em] & \makebox[1.5em] & \makebox[1.5em] & \\
\hline
$2$ & $2$ & & & & & & & & \\
\hline
$3$ & $2$ & $16$ & $9$ & & & & & & \\
\hline
$4$ & $2$ & $16$ & $72$ & $64$ & $15$ & & & & \\
\hline
$5$ & $2$ & $16$ & $72$ & $240$ & $260$ & $120$ & $21$ & & \\
\hline
$6$ & $2$ & $16$ & $72$ & $240$ & $660$ & $792$ & $498$ & $168$ & $27$ \\
\hline
$7$ & $2$ & $16$ & $72$ & $240$ & $660$ & $1584$ & $2011$ & $1512$ & $735$\\
\hline
\hline
$\dlim_t$ & $1$ & $8$ & $36$ & $120$ & $330$ & $792$ & $1716$ & $3432$ & $6435$ \\[4pt]
\hline
\end{tabular}
\end{center}
\end{table}
\end{example}

As a corollary, we see that the bound $\codim(\Sing X)$ occurring in Theorem~\ref{theorem:constant} is sharp:

\begin{example}
\label{example:slocus:sharp}
Fix integers $0 \le s \le d$. Over a field of characteristic zero, choose an lci subscheme $X\subset\PP^n$ of dimension $d$ as in Example~\ref{ex:NormalNotAmpleReduced} if $s < d$, and as in Example~\ref{example:normal:not:ample} if~$s = d$. The normal bundle $\calN$ is $s$-ample but not $(s-1)$-ample by construction. Thus, by Lemma~\ref{lemma:ample:criterion}, we may choose an integer $j$ such that
\[
H^s(X, \Sym^t(\calN)(-j) \otimes K_X) \neq 0
\]
for infinitely many integers $t \gg 0$. We claim that the projective system $\{H^{d-s}(X_t,\calO_{X_t}(j))\}$ does not stabilize, i.e., the conclusion of Theorem~\ref{theorem:constant} fails for~$k = d-s$. To see this, fix some $t_0 \gg 0$ such that for all $t \ge t_0$, the maps
\[
H^{d-s-1}(X_{t+1}, \calO_{X_{t+1}}(j)) \to H^{d-s-1}(X_t, \calO_{X_t}(j))
\]
are isomorphisms; the existence of such a $t_0$ is ensured by Theorem~\ref{theorem:constant}. This ensures injectivity on the left the following standard exact sequence
\[
0 \to H^{d-s}(X, \Sym^t(\calN^\vee)(j)) \to H^{d-s}(X_{t+1}, \calO_{X_{t+1}}(j)) \stackrel{a_t}{\to} H^{d-s}(X_t, \calO_{X_t}(j)).
\]
Now the term on the left is dual to $H^s(X, \Sym^t(\calN)(-j) \otimes K_X)$ by Serre duality (and the identification of divided and symmetric powers in characteristic zero). By construction, this term is nonzero for infinitely many $t \gg t_0$. Thus, the map $a_t$ is not an isomorphism for infinitely many $t \gg t_0$, as wanted.
\end{example}

%%%%%%%%%%%%%%%%%%%%%%%%%%%%%%%%%%%%%%%%%%%%%%%%%%%%%%%%%%%%%%%%%%%%%%%%

\end{document}